\newtheorem{thm}{Theorem}[section]
\newtheorem{coro}[thm]{Corollary}
\newtheorem{prop}[thm]{Proposition}
\newtheorem{lemma}[thm]{Lemma}
\newtheorem{conjecture}[thm]{Conjecture}
\newtheorem{question}[thm]{Question}
\newtheorem{hypothesis}[thm]{Hypothesis}
\theoremstyle{definition}
\newtheorem{definition}[thm]{Definition}
\newtheorem{remark}[thm]{Remark}
\theoremstyle{remark}
\DeclareMathOperator{\Cl}{Cl}
\DeclareMathOperator{\Gal}{Gal}
\newcommand{\CC}{\mathbb{C}}
\newcommand{\QQ}{\mathbb{Q}}
\newcommand{\RR}{\mathbb{R}}
\newcommand{\ZZ}{\mathbb{Z}}
\newcommand{\Z}{\mathbb{Z}}
\def\bigcapp{\raise1ex\hbox{\rotatebox{180}{$\biguplus$}}}
\def\bigcappd{\raise1ex\hbox{\rotatebox{180}{$\displaystyle\biguplus$}}}
\begin{document}
	\title[Galois-Gauss sums and the square root of the inverse different]{On Galois-Gauss sums and\\ the square root of the inverse different}
	\author{Yu Kuang}
	\date{19 October 2022}
	\email[Y. Kuang]{yu.kuang.3@gmail.com}
	\address{Zhuhai, China}

\begin{abstract}
We discuss a possible generalisation of a conjecture of Bley, Burns and Hahn \cite{BBH} concerning the relation between the second Adams-operator twisted Galois-Gauss sums of weakly ramified Artin characters and the square root of the inverse different of finite, odd degree, Galois extensions of number fields, to the setting of all finite Galois extensions of number fields for which a square root of the inverse different exists. We also extend the key methods and results of \cite{BBH} to this more general setting and, by combining these methods with a recent result of Agboola, Burns, Caputo and the present author concerning Artin root numbers of twisted irreducible symplectic characters, we provide new insight into a conjecture of Erez concerning the Galois structure of the square root of the inverse different.
\end{abstract}

\maketitle


\section{Introduction}
About thirty years ago, Erez initiated the study of the Hermitian-Galois module structure of the square root of the inverse different of finite Galois extensions of number fields.

To discuss this, we fix a finite Galois extension of number fields $L/K$, with $G = {\rm Gal}(L/K)$, and write $\mathcal{O}_L$ for the ring of algebraic integers of $L$ and $\mathcal{D}_{L/K}$ for the different of $L/K$. 

We note that there can exist at most one fractional ideal $\mathcal{A}_{L/K}$ of $L$ for which there is an equality of fractional ideals
\begin{equation}\label{def alk} (\mathcal{A}_{L/K})^2 = (\mathcal{D}_{L/K})^{-1}.\end{equation}
If such an ideal $\mathcal{A}_{L/K}$ exists, then (unsurprisingly) it is referred to  as the `square root of the inverse different' of $L/K$. More importantly, it will necessarily be both self-dual with respect to the canonical trace pairing $L \times L \to K$ of $L/K$ and also stable under the action of $G$ so that it can be considered as a module over the integral group ring $\ZZ[G]$.  

We henceforth assume that $L/K$ is `weakly ramified' in the sense of Erez \cite{E}. We recall that this means that the second ramification subgroup in $G$ (in the lower numbering) of every place of $K$ is trivial and that $L/K$ being tamely ramified is a sufficient, but not necessary, condition, for this to be satisfied. 

Suppose $G$ has odd order, we note that, Hilbert's classical formula for the valuation of $\mathcal{D}_{L/K}$ at each prime ideal implies that the fractional ideal $\mathcal{A}_{L/K}$ satisfying (\ref{def alk}) exists and, by a result of Erez in \cite{E}, $\mathcal{A}_{L/K}$ is a projective $\ZZ[G]$-module .

We also recall that the link between the Hermitian-Galois structures that are attached to $\mathcal{A}_{L/K}$ and the (second Adam-operator) `twisted' Galois-Gauss sums has been shown in special cases that $L/K$ is tamely ramified (in the article of Erez and Taylor \cite{ET}) and that $L/K$ is wildly ramified and satisfied a variety of restrictive hypotheses (in a series of articles of Vinatier \cite{V} and \cite{V2}, of Pickett and Vinatier \cite{PV} and of Pickett and Thomas \cite{PT}).

In particular, for {\em tamely} ramified, {\em odd} degree extensions $L/K$, Erez has shown in \cite{E} that the class $[\mathcal{A}_{L/K}]$ in the reduced projective class group ${\rm Cl}(\ZZ[G])$ defined by $\mathcal{A}_{L/K}$ is trivial (that is, equivalently, $\mathcal{A}_{L/K}$ is a free $\ZZ[G]$-module). 

Motivated by this result, and several other results (including extensive numerical computations), Vinatier has made the following conjecture (cf. \cite[Conj.]{V} and \cite[\S1.2]{CV}). 
\begin{conjecture}[{Vinatier}] \label{C:sv}
If $L/K$ is a weakly ramified Galois extension of number fields of odd degree, then $\mathcal{A}_{L/K}$ is a free $\ZZ[G]$-module. \end{conjecture}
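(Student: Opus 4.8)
The plan is to split Conjecture~\ref{C:sv} into two parts: (i) an explicit formula expressing the class $[\mathcal{A}_{L/K}]$ in the reduced projective classgroup $\Cl(\ZZ[G])$ in terms of the second Adams operation twisted Galois--Gauss sums of the irreducible characters of $G$ --- this is exactly the Bley--Burns--Hahn relation \cite{BBH} that the present article sets out to generalise --- and (ii) the vanishing of the resulting class whenever $|G|$ is odd. The natural framework for both is Fr\"ohlich's Hom-description of the locally free classgroup. Since $\mathcal{A}_{L/K}$ is, by the theorem of Erez \cite{E} recalled above, a projective (hence locally free, of $\ZZ[G]$-rank $[K:\QQ]$) module, its class is represented by an $\Omega_\QQ$-equivariant homomorphism $\lambda\colon R_G\to J(\QQ^c)$, well-defined modulo the subgroup generated by $\Det\bigl(\mathcal{U}(\ZZ[G])\bigr)$ and by the homomorphisms valued in $(\QQ^c)^\times$; concretely $\lambda$ is the idelic resolvent $\chi\mapsto (a_v\mid\chi)_v$ attached to a family of local $\mathcal{O}_{K,v}[G]$-generators $a_v$ of the completions $(\mathcal{A}_{L/K})_v$, whose existence is guaranteed by local freeness.

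Part~(ii) is the soft one. When $|G|$ is odd the second Adams operation $\psi^2$ merely permutes $\Irr(G)$ (squaring being coprime to $|G|$) and, more to the point, $G$ has no irreducible symplectic character; hence the symplectic Artin root number obstruction which governs $[\mathcal{O}_L]$ in Taylor's theorem (the Fr\"ohlich conjecture) is automatically trivial, and the same mechanism --- elementary in the odd order case --- shows that the twisted Galois--Gauss sum homomorphism $\chi\mapsto\tau(\QQ,\psi^2\chi)$ already lies in the allowed subgroup $\Det(\mathcal{U}(\ZZ[G]))\cdot\Hom_{\Omega_\QQ}(R_G,(\QQ^c)^\times)$. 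Granting (i), this forces $[\mathcal{A}_{L/K}]=0$, and then genuine $\ZZ[G]$-freeness follows from Jacobinski's cancellation theorem, the relevant Eichler condition being met here.

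The real work, and the step I expect to be the \emph{main obstacle}, is Part~(i): proving for \emph{every} weakly ramified $L/K$ of odd degree that $\lambda$ agrees, modulo $\Det$ of units and rational homomorphisms, with $\chi\mapsto\tau(\QQ,\psi^2\chi)$. This is a local assertion, one place $v$ of $K$ at a time, the product of the local Galois--Gauss sums reassembling the global one. At tame $v$ the needed evaluation of $(a_v\mid\chi)$ is the Erez--Taylor computation \cite{ET}, and the tame case of the whole conjecture is in any case Erez's theorem \cite{E} quoted above. At a wildly --- but, by hypothesis, only weakly --- ramified $v$, which necessarily lies over an odd rational prime dividing $|G|$, one must exhibit an explicit local self-dual normal integral basis generator of $(\mathcal{A}_{L/K})_v$ whose resolvent is pinned down, modulo $\Det(\mathcal{O}_{K,v}[G]^\times)$, by the local twisted Galois--Gauss sum; triviality of the second ramification group makes $L_w/K_v$ ``almost tame'', which is what gives a foothold. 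The existing constructions of such generators (Vinatier \cite{V}, \cite{V2}; Pickett--Vinatier \cite{PV}; Pickett--Thomas \cite{PT}) work only under restrictive hypotheses on $K_v$, on the ramification filtration, or on the local Galois group, and removing these is the crux: one either needs a uniform local construction valid for all weakly ramified $v$, or a way to compute the local contribution to $[\mathcal{A}_{L/K}]$ directly from local epsilon-constants via a suitable equivariant local constant conjecture, bypassing explicit generators altogether. It is worth noting that in the parallel even-degree situation the analogous ``sign'' ambiguity is no longer automatically trivial, and there the recent theorem of Agboola, Burns, Caputo and the present author on Artin root numbers of twisted irreducible symplectic characters is precisely the arithmetic input that controls it and yields the promised new information on Erez's conjecture; in the odd-degree case of Conjecture~\ref{C:sv} that input is vacuous, and the entire difficulty is concentrated in the local wild resolvent computation.
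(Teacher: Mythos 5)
This statement is a \emph{conjecture} (Vinatier's conjecture), not a theorem: the paper itself offers no proof of it, and beyond the tame case (Erez) and various special wildly ramified cases (Vinatier, Pickett--Vinatier, Pickett--Thomas, Bley--Cobbe, and Theorem \ref{last intro thm} of this paper, all under restrictive local hypotheses) it remains open. Your proposal is accordingly not a proof but a strategy outline, and the gap in it is precisely the open content of the conjecture. Your Part (i) --- showing that the idelic resolvent homomorphism representing $[\mathcal{A}_{L/K}]$ in the Hom-description agrees, modulo $\Det$ of unit ideles and $\Omega_{\QQ}$-equivariant rational-valued homomorphisms, with $\chi\mapsto\tau(\QQ,\psi_2\chi)$ at \emph{every} weakly ramified place --- is exactly what is not known: the local self-dual generator constructions you cite exist only under hypotheses such as absolutely unramified base, abelian decomposition group, or cyclic inertia (these are literally the hypotheses (a)--(c) under which Proposition \ref{Th: BBH thm} and Theorem \ref{last intro thm}(i) are proved), and you yourself concede that ``removing these is the crux'' without supplying either the uniform local construction or the epsilon-constant route you mention. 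A plan whose central step is flagged as an expected obstacle cannot be spliced in as a proof of the conjecture.

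Two smaller points. First, even granting Part (i), your Part (ii) is not as soft as stated: the absence of irreducible symplectic characters for odd $|G|$ kills the root-number obstruction, but concluding that $\chi\mapsto\tau(\QQ,\psi_2\chi)$ (or its modified version) lies in $\Det(\ZZ[G]\text{-unit ideles})\cdot\Hom_{\Omega_\QQ}(R_G,(\QQ^c)^\times)$ requires the full Fr\"ohlich--Taylor machinery (Taylor's fixed point theorem, the behaviour of $\psi_2$ on $\Det$ groups as in Cassou-Nogu\`es--Taylor, and the resolvent/Gauss-sum congruences), which is how Erez's tame theorem and \S4 of this paper actually proceed; it is not an elementary consequence of oddness. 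Second, the final passage from $[\mathcal{A}_{L/K}]=0$ to genuine freeness via the Eichler/Jacobinski cancellation argument is fine for odd $|G|$, but it is the only step of your outline that is routine. In short: the approach you sketch is the standard and correct framework, consistent with how the literature attacks the problem, but it does not constitute a proof, and no proof of Conjecture \ref{C:sv} exists in the paper or elsewhere.
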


\subsection{The conjecture of Erez}\label{S: erez conj}

We now assume that $L/K$ is {\em any} weakly ramified extension for which a fraction ideal $\mathcal{A}_{L/K}$ satisfying \eqref{def alk} exists. 

The first investigation of $\mathcal{A}_{L/K}$ in this case was undertaken by Caputo and Vinatier in the article \cite{CV}. By making a detailed study of certain torsion modules first considered by Chase \cite{Ch84}, they were able to prove that if $L/K$ is tamely ramified (so that the $\ZZ[G]$-modules $\mathcal{A}_{L/K}$ and $\mathcal{O}_L$ are both projective) and the decomposition group in $G$ of every ramified place of $L/K$ is abelian, then there is an equality in ${\rm Cl}(\ZZ [G])$
\begin{equation}\label{first one} [\mathcal{A}_{L/K}] = [\mathcal{O}_{L}]. \end{equation}

In this tamely ramified case one also knows, by Taylor's celebrated proof of Fr\"ohlich's Conjecture (in \cite{MT_81}), 
that there is an equality 
\begin{equation}\label{second one} [\mathcal{O}_L] = W_{L/K},\end{equation}
where $W_{L/K}$ denotes the Cassou-Nogu\`es-Fr\"ohlich root number class, which is defined in terms of Artin root numbers attached to non-trivial irreducible symplectic characters of $G$. 

By comparing the equalities (\ref{first one}) and (\ref{second one}), Caputo and Vinatier were, in particular, able to deduce the existence of a tamely ramified extension $L/K$ in which the $\ZZ[G]$-module $\mathcal{A}_{L/K}$ is not free, thereby showing (in view of Conjecture \ref{C:sv}) that the general theory of $\mathcal{A}_{L/K}$ can have features that do not seem apparent in the case of extensions of odd degree.  

Next we recall that, independently of any ramification hypotheses, Chinburg \cite{Chin_85} has defined
a canonical `$\Omega(2)$-invariant' $\Omega(L/K, 2)$ in $\Cl(\Z G)$, has shown that $\Omega(L/K, 2) = [\mathcal{O}_L]$ whenever $L/K$ is tamely ramified and has conjectured, as a natural generalization of the equality (\ref{second one}) proved by Taylor, that in all cases one should have  
\begin{equation}\label{third one}\Omega(L/K, 2) \stackrel{?}{=} W_{L/K}. \end{equation}

This conjectural equality has by now been extensively studied in the literature and is also known to have a natural interpretation as a consequence of the compatibility with respect to the functional equation of Artin $L$-functions of an important special case of the equivariant Tamagawa number conjecture.   

More concretely, following the results in \cite{AC} and a comparison of the equalities (\ref{first one}), (\ref{second one}) and (\ref{third one}), we consider a version of the conjecture made by Erez (cf. \cite[Ques. 2]{CV}  and \cite[Th. 1.5]{AC}).

\begin{conjecture} \label{C:boas} Let $L/K$ be a weakly ramified Galois extension of number fields for which $\mathcal{A}_{L/K}$ exists, and set $G = \Gal(L/K)$. Then, in ${\rm Cl}(\ZZ[G])$, one has 
\[
\Omega(L/K, 2) = [\mathcal{A}_{L/K}]+\mathcal{J}_{2, S, L/K}.
\]
\end{conjecture}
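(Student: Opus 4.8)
The plan is to compare the two sides of the asserted identity inside the Fr\"ohlich--Chinburg ``Hom-description'' of $\mathrm{Cl}(\ZZ[G])$. In that description a class is represented by a $\Gal(\QQ^c/\QQ)$-equivariant homomorphism from the additive group $R_G$ of virtual complex characters of $G$ to the finite ideles of $\QQ^c$, taken modulo the homomorphisms induced by units of the relevant rings of integers together with those arising from global elements. Each of the three quantities $\Omega(L/K,2)$, $[\mathcal{A}_{L/K}]$ and $\mathcal{J}_{2,S,L/K}$ comes, by its very construction, with a canonical representative of this shape, and the point is to exhibit an explicit cancellation of these representatives. Such a cancellation can be checked one rational prime at a time and is trivial away from $S$, so the whole argument localises at the finitely many ramified primes.

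For the $\Omega$-invariant I would start from Chinburg's construction of $\Omega(L/K,2)$ via a canonical $2$-extension representing the fundamental class of $L/K$, together with Chinburg's theorem that $\Omega(L/K,2)=[\mathcal{O}_L]$ when $L/K$ is tamely ramified. Combined with Taylor's proof of Fr\"ohlich's conjecture \cite{MT_81}, the tame case gives $\Omega(L/K,2)=W_{L/K}$, whose Hom-representative is the classical one built from the Galois-Gauss sums $\tau(\chi)$. For general weakly ramified $L/K$ one must then extract, from the local fundamental classes, a representative of $\Omega(L/K,2)$ that differs from this ``Galois-Gauss-sum'' representative only through explicit terms supported at the wildly ramified primes in $S$; this is the step that imports the machinery of \cite{AC} and of the wild cases treated by Vinatier, Pickett and others.

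For the square root of the inverse different I would produce, at each prime, a generator of the completion of $\mathcal{A}_{L/K}$ over the relevant completed order, using Erez's construction \cite{E} and its self-duality, and compute the resulting local resolvent. The governing mechanism --- already visible in the tame case in the theorem of Erez and Taylor \cite{ET} --- is that passing from $\mathcal{O}_L$ to its square root $\mathcal{A}_{L/K}$ replaces $\tau(\chi)$ by its second Adams-operator twist $\tau(\psi^2\chi)$ up to controlled local factors, so that the Hom-representative of $[\mathcal{A}_{L/K}]$ is obtained from that of $[\mathcal{O}_L]$ by the substitution $\chi\mapsto\psi^2\chi$. Establishing this at the weakly, but possibly wildly, ramified primes, where $\mathcal{A}_{L/K}$ carries no normal integral basis, is exactly the generalisation of \cite{BBH} that has to be carried out. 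Granting it, the class $\mathcal{J}_{2,S,L/K}$ --- which is defined precisely so as to record the defect between the $\psi^2$-twisted Galois-Gauss sums and their natural $S$-truncated normalisation, and so as to specialise to the Caputo--Vinatier correction in the tame case \cite{CV} --- appears on the nose when one subtracts the representative of $[\mathcal{A}_{L/K}]$ from that of $\Omega(L/K,2)$, the archimedean and tame contributions cancelling by Taylor's theorem and \cite{ET}.

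The main obstacle is the contribution of the wildly, but weakly, ramified primes. There the resolvent computation of the previous step must be replaced by a delicate local structural analysis, and, more seriously, the symplectic part of the Hom-description --- where Artin root numbers, and hence subtle sign information, reside --- cannot be controlled by formal arguments: matching the symplectic component of $[\mathcal{A}_{L/K}]$ (governed by $\psi^2$-twisted root numbers) with that of $W_{L/K}$ (governed by ordinary root numbers) is genuinely nontrivial. This is exactly where the recent result of Agboola, Burns, Caputo and the author on the Artin root numbers of $\psi^2$-twisted irreducible symplectic characters becomes decisive. I would therefore expect the argument to reduce the asserted identity to Chinburg's conjectural equality $\Omega(L/K,2)=W_{L/K}$ together with a purely local statement at the wildly ramified primes, and hence to establish it unconditionally whenever that conjecture is known --- in particular in the tame case, thereby recovering the results of \cite{CV}.
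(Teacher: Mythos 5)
The statement you set out to prove is Conjecture \ref{C:boas}, which is an \emph{open conjecture}: the paper does not prove it, but only establishes it under the restrictive Hypothesis \ref{big hyp} (Theorem \ref{last intro thm}(ii), i.e.\ Theorem \ref{Lem: local ab Erez}(ii)). Read as a proof, your proposal therefore has a genuine gap, and the gap sits exactly at the points you yourself flag as ``the main obstacle''. Concretely: (a) your key mechanism --- that the Hom-representative of $[\mathcal{A}_{L/K}]$ is obtained from that of $[\mathcal{O}_L]$ by the substitution $\chi\mapsto\psi_2(\chi)$ ``up to controlled local factors'' --- is precisely what must be \emph{proved} at the weakly but wildly ramified places, and when $|G|$ is even $\psi_2$ does not commute with induction, so the Fr\"ohlich--Taylor induction formalism you invoke does not apply formally; in the paper this comparison is only carried out for wild places satisfying conditions (ii)--(v) of Hypothesis \ref{big hyp}, by importing the computations of Bley--Cobbe and Pickett--Vinatier (Lemma \ref{Lemma: local weakly rami main}); (b) the symplectic sign matching that you delegate to the result of \cite{AC} is, in the form available (Proposition \ref{Lem: local root number + thm}), restricted to \emph{tamely} ramified local extensions of odd ramification degree --- for weakly wildly ramified places the positivity of the $\psi_2$-twisted symplectic terms is explicitly recorded in the paper as unknown; and (c) your final reduction leaves everything conditional on Chinburg's equality $\Omega(L/K,2)=W_{L/K}$, which the paper only obtains from Bley--Cobbe (Proposition \ref{Prop: chin conj BC}) under the extra condition (vi). So what your outline can deliver is at best a conditional statement in special cases, not a proof of the conjecture as stated; none of the three inputs above is supplied.

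For comparison, the paper's partial results are organised not in the raw Hom-description but in relative $K$-theory: it generalises the Bley--Burns--Hahn element $\mathfrak{a}_{L/K}\in K_0(\ZZ[G],\QQ[G])$ and the twisted unramified characteristic $\mathfrak{c}_{L/K}$, proves $\mathfrak{a}_{L/K}=\mathfrak{c}_{L/K}$ place by place (tame places by an Erez-style resolvent versus $\psi_2$-twisted Galois--Gauss sum computation together with Taylor's fixed point theorem, Theorem \ref{Th: tame local main}, Lemma \ref{Lem: tame local norm main} and Proposition \ref{Prop: local in Q}; wild places under (a)--(c) by Lemma \ref{Lemma: local weakly rami main}), and then pushes down to the class group via $\partial^0_{\ZZ,\QQ,G}(\mathfrak{a}_{L/K})=[\mathcal{A}_{L/K}]+W^{(2)}_{L/K}$ (Proposition \ref{frak a global independence}(iv)), the identification $W^{(2)}_{L/K}=W_{L/K}+\mathcal{J}_{2,S,L/K}$ and the vanishing of the image of $\mathfrak{c}_{L/K}$ (Lemma \ref{Prop: local ab frak c proj}), before finally citing Bley--Cobbe. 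Your plan is the same in spirit (resolvents versus twisted Gauss sums, a symplectic sign correction, reduction to $\Omega(L/K,2)=W_{L/K}$), but to turn it into an argument for the cases that are currently accessible you would need to supply, in the Hom-description, the precise local resolvent identities and the unit control over $\mathcal{O}_\ell^t$ that the $K$-theoretic route extracts from Taylor's fixed point theorem, and you would need to restrict to hypotheses under which the wild local computations and the symplectic sign analysis are actually available.
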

Here $\mathcal{J}_{2, S, L/K}$ is an element defined solely on the sign of the second Adams-operator twisted modified Galois-Jacobi sums attached to irreducible symplectic characters. In particular, the class in ${\rm Cl}(\ZZ[G])$ defined by $\mathcal{J}_{2, S, L/K}$ has order at most $2$ and is not always trivial in the class group (see \S \ref{S: Symp J}). 

This conjecture includes Conjecture \ref{C:sv} as a special case (since, if $|G|$ is odd, and so has no non-trivial irreducible symplectic characters, then one has $W_{L/K} = \mathcal{J}_{2, S, L/K}= 0$ ), and is perhaps the central question in the theory  of $\mathcal{A}_{L/K}$ for weakly ramified extensions of arbitrary degree. 

\subsection{The conjecture of Bley, Burns and Hahn}

Taking motivation from a somewhat different direction, Bley, Burns and Hahn have in \cite{BBH} recently introduced techniques of relative algebraic $K$-theory to formulate a precise conjectural link between $\mathcal{A}_{L/K}$ and twisted Galois-Gauss sums, and have thereby explained how much of the theory developed by Erez, by Erez and Taylor and by Vinatier can be refined.

More precisely, suppose $L/K$ is of odd degree, Bley, Burns and Hahn define a canonical relative element $\mathfrak{a}_{L/K}$ in the relative $K_0$-group $K_0(\mathbb{Z}[G],\mathbb{Q}^c[G])$ that measures the difference of the invariant arising from $\mathcal{A}_{L/K}$ and the second Adams-operator twisted Galois-Gauss sums of weakly ramified characters. 
They are able to prove in \cite[Th. 5.2]{BBH} that the element $\mathfrak{a}_{L/K}$ belongs to the torsion subgroup $K_0(\mathbb{Z}[G],\mathbb{Q}[G])_{\rm tor}$ of the subgroup $K_0(\mathbb{Z}[G],\mathbb{Q}[G]) \subset K_0(\mathbb{Z}[G],\mathbb{Q}^c[G])$, and that $\mathfrak{a}_{L/K}$ has good functorial properties under change of extension $L/K$. 

Motivated by these facts and extensive numerical computation, the authors are led to formulate the following conjecture.
\begin{conjecture}[{\cite[Conj. 10.7]{BBH}}]\label{bbh conj} If $L/K$ is any weakly ramified Galois extension of number fields of odd degree. Then, in $K_0(\ZZ[G], \QQ[G])$, one has $ \mathfrak{a}_{L/K} = \mathfrak{c}_{L/K}$.
\end{conjecture}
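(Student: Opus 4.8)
The plan is to prove the identity $\mathfrak{a}_{L/K} = \mathfrak{c}_{L/K}$ in $K_0(\ZZ[G], \QQ[G])$ by reducing it to a family of local comparisons and then analysing each one. The starting point is the theorem of Bley, Burns and Hahn that $\mathfrak{a}_{L/K}$ lies in the torsion subgroup $K_0(\ZZ[G], \QQ[G])_{\rm tor}$ and --- as one checks directly from its definition --- that $\mathfrak{c}_{L/K}$ does too; since a torsion class in this group is detected, prime by prime, by its images in the groups $K_0(\ZZ_p[G], \QQ_p[G])$, it suffices to prove the equality after localisation at each prime $p$. Now $\mathfrak{a}_{L/K}$ and $\mathfrak{c}_{L/K}$ are each assembled from the resolvent invariant of a local generator of $\mathcal{A}_{L/K}$ and from the second Adams-operator twisted Galois-Gauss sums, and the latter factor as products of local Galois-Gauss sums; hence, at a given $p$, each of the two classes decomposes as a sum over the places $w$ of $L$ above $p$ of a local term, and it is enough to compare these place by place. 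Concretely, fixing such a $w$ with decomposition group $G_w \subseteq G$, one must compare the local contribution of the $\ZZ_p[G_w]$-module $\mathcal{A}_{L_w/K_v}$ with that of the $\psi^2$-twisted local Galois-Gauss sum of $L_w/K_v$.

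First I would dispose of the tamely ramified places. At a place $w$ at which $L/K$ is tamely ramified, the module $\mathcal{A}_{L_w/K_v}$ is explicitly described by Erez, and the comparison of its resolvent with the twisted local Galois-Gauss sum is exactly the content of the Erez--Taylor theory \cite{ET} of the tame square root of the inverse different: the relevant resolvent matches the $\psi^2$-twist of the local Galois-Gauss sum, by the Stickelberger-type congruences underlying Erez's description. Since it is shown in \cite{BBH} that $\mathfrak{a}_{L/K}$ refines this result, this part should require only assembling what is already available; the whole difficulty is therefore concentrated at the wildly --- but, as $L/K$ is weakly ramified, only ``weakly wildly'' --- ramified places.

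At such a place $w$ (of residue characteristic $p$, necessarily a divisor of $|G|$) the strategy is to imitate Taylor's fixed-point argument from the tame case, adapted to the $\psi^2$-twist. Using Erez's theorem \cite{E} that $\mathcal{A}_{L/K}$ is projective over $\ZZ[G]$, one fixes a local generator $a$ of $\mathcal{A}_{L/K}$ over $\ZZ_p[G]$, realises the local contribution of $\mathcal{A}_{L_w/K_v}$ as the class of the associated resolvents $(a \mid \chi)$, rewrites the local Galois-Gauss sum via the associated Frobenius module and the standard Gauss-sum expansion, and shows that the two differ by a unit accounted for exactly by $\mathfrak{c}_{L/K}$. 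The inputs available here are the explicit descriptions of $\mathcal{A}_{L_w/K_v}$ and of the twisted local Galois-Gauss sums obtained, in various restrictive situations, by Vinatier \cite{V, V2}, by Pickett--Vinatier \cite{PV} and by Pickett--Thomas \cite{PT}, together with the recent result of Agboola, Burns, Caputo and the present author \cite{AC} on Artin root numbers of twisted irreducible symplectic characters --- the latter being what one would invoke to control the symplectic part of the comparison (and hence, in the general weakly ramified formulation, the contribution of $\mathcal{J}_{2, S, L/K}$ to $\mathfrak{c}_{L/K}$), even though it plays no role in the odd-degree case, where no non-trivial symplectic characters occur.

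The main obstacle is precisely this local analysis at the weakly wildly ramified places. There is as yet no ``unramified characterisation'' of $\mathcal{A}_{L_w/K_v}$ valid for all such local extensions --- its $\ZZ_p[G_w]$-module structure, and a fortiori a canonical generating resolvent, is understood only for special families, for instance when $L_w/K_v$ is of the simplest wildly ramified type --- and, even granted such a description, the explicit evaluation of the $\psi^2$-twisted local Galois-Gauss sum appears to be out of reach outside those same families. I would therefore expect a complete proof of Conjecture~\ref{bbh conj} to remain inaccessible by these methods, and would instead aim for the two natural partial statements: the tamely ramified case in full generality, obtained by the reduction above, and the weakly wild case under hypotheses --- on the decomposition groups, or on the local extensions $L_w/K_v$ --- that bring it within the scope of the known explicit computations, while checking throughout that the $K$-theoretic assertion is compatible with change-of-extension functoriality, so that such partial results propagate correctly.
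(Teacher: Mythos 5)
The statement you were asked about is Conjecture \ref{bbh conj}, an open conjecture that neither this paper nor \cite{BBH} proves in full, so there is no complete proof in the paper to compare against; your proposal correctly recognises this and does not claim one. Your outlined reduction --- localise via the decomposition $K_0(\ZZ[G],\QQ[G])\cong\bigoplus_\ell K_0(\ZZ_\ell[G],\QQ_\ell[G])$ together with the place-by-place decomposition of $\mathfrak{a}_{L/K}$ and $\mathfrak{c}_{L/K}$, dispose of the tamely ramified places by comparing resolvents of a local generator of the square root of the inverse different with $\psi_2$-twisted Galois--Gauss sums in the manner of Erez and Fr\"ohlich--Taylor (using Taylor's fixed point theorem), and treat the weakly wild places only under restrictive hypotheses via the explicit computations of Pickett--Vinatier and Bley--Cobbe --- is essentially the route this paper follows to its partial results (Proposition \ref{Lemma: frak a local decomp}, Theorem \ref{Th: tame local main}, Lemma \ref{Lemma: local weakly rami main}, culminating in Theorem \ref{last intro thm}(i)). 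Two minor remarks: the appeal to torsion is unnecessary, since by \eqref{Eq: KT iso K0} every class of $K_0(\ZZ[G],\QQ[G])$, torsion or not, is detected by its images in the groups $K_0(\ZZ_\ell[G],\QQ_\ell[G])$; and, as you note, in the odd-degree setting of the conjecture the symplectic root-number input of \cite{AC} plays no role (it becomes essential only for the even-degree generalisation, where $\psi_2$ fails to commute with induction).
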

Here $\mathfrak{c}_{L/K}$ is a canonical `idelic twisted unramified characteristic' element such that it also belongs to $K_0(\mathbb{Z}[G],\mathbb{Q}[G])_{\rm tor}$ and enjoys the same functoriality properties under change of extension as does $\mathfrak{a}_{L/K}$. Recently the present author has provided new evidence \cite{Ku} for this conjecture in the setting of extensions of odd prime-degree.

\subsection{Main results}

We find it tempting, therefore, to wonder whether there are any useful links between Conjecture~\ref{C:boas} and the general approach of Bley, Burns and Hahn? 

The first thing to note in this regard is that all of the constructions in \cite{BBH} are made under the assumption that $G$ has odd order and so, in order to explore possible links to Conjecture \ref{C:boas}, it is necessary to define analogues of the relative elements $\mathfrak{a}_{L/K}$ and $\mathfrak{c}_{L/K}$ under the weaker hypothesis that $\mathcal{A}_{L/K}$ exists (rather than that $G$ has odd order). 

Fortunately, this is not very difficult - for the details see \S\ref{S: elements of BBH} and \S\ref{tiuc section}. In addition, we are also able to use the methods developed in \cite{BBH} to show that, in this more general setting, the elements $\mathfrak{a}_{L/K}$ and $\mathfrak{c}_{L/K}$ satisfy many of the same key properties as are established (for extensions of odd degree) in loc. cit. 

However, a much more serious issue that arises in this setting is that the (generalized) element $\mathfrak{a}_{L/K}$ involves Galois-Gauss sums attached to virtual characters that are obtained by taking the image of Artin characters under the second Adams operator $\psi_2$. 

The point here is that, whilst the approach of \cite{BBH} is rooted in the methods of Fr\"ohlich and Taylor and hence makes systematic use of functorial behaviour under passage to subgroups (or equivalently, via the appropriate `Hom-description', under induction of characters from subgroups), if the order of $G$ is even, then the operator $\psi_2$ does not always commute with induction of characters from subgroups. 

For this reason, the explicit computation of the Artin root numbers of (irreducible symplectic) characters twisted by $\psi_2$ can be considerably more difficult for extensions of even degree than for extensions of odd degree. 

In some cases, we are able to overcome these difficulties by using a detailed analysis of the root numbers of $\psi_2$-twisted characters that has recently been made in joint work \cite{AC} of ours with Agboola, Burns and Caputo. This result relies principally both on a purely representation-theoretic analysis of the extent to which $\psi_2$ commutes with induction functors and on the explicit computation of the root numbers of dihedral characters that is given by Fr\"ohlich and Queyrut in \cite{FQ73}. 

In particular, by combining these results with the general approach of Erez in \cite{E} and with the results of Bley and Cobbe in \cite{BC} (which themselves rely heavily on those of \cite{PV}), we are able to prove the following results. 

\begin{thm}\label{last intro thm} Let $L/K$ be a finite weakly ramified Galois extension of number fields for which $\mathcal{A}_{L/K}$ exists and set $G := \Gal(L/K)$. Then the following claims are valid. 
\begin{itemize}
\item[(i)] One has $\mathfrak{a}_{L/K} = \mathfrak{c}_{L/K}$ in $K_0(\ZZ[G],\QQ^c[G])$ provided that every place $v$ of $K$ that is wildly ramified in $L$ has the following three properties and no such $v$ is $2$-adic:
\begin{itemize}
\item[(a)] the decomposition subgroup in $G$ of any place of $L$ above $v$ is abelian;
\item[(b)] the inertia subgroup in $G$ of any place of $L$ above $v$ is cyclic;
\item[(c)] the completion of $K$ at $v$ is absolutely unramified.
\end{itemize}
\item[(ii)] Conjecture \ref{C:boas} is valid if the conditions of claim (i) are satisfied and, in addition, the inertia degree of each wildly ramified place $v$ in $L/K$ is prime to the absolute degree of the completion $K_v$. 
\end{itemize}
\end{thm}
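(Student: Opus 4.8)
The plan is to deduce both claims from a place‑by‑place analysis of the local components of the elements $\mathfrak{a}_{L/K}$ and $\mathfrak{c}_{L/K}$ constructed in \S\ref{S: elements of BBH} and \S\ref{tiuc section}, using the hypotheses (a)--(c) to evaluate the contributions of the wildly ramified places by means of the explicit local descriptions of $\mathcal{A}_{L/K}$ that become available under these conditions. For claim (i), I would first record that, by the generalisations of the constructions of \cite{BBH} given in \S\ref{S: elements of BBH} and \S\ref{tiuc section}, both $\mathfrak{a}_{L/K}$ and $\mathfrak{c}_{L/K}$ are assembled from data that is local at the ramified places of $L/K$ together with the archimedean (root‑number) Galois‑Gauss sum contributions, and that at a tamely ramified place the two local terms agree: this comparison of the local square root of the inverse different with the local unramified characteristic via modified Galois‑Gauss sums is exactly the content of the tame theory going back to Erez--Taylor \cite{ET} (and already incorporated into \cite{BBH}). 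Hence $\mathfrak{a}_{L/K}-\mathfrak{c}_{L/K}$ is supported on the wildly ramified places, and it suffices to prove that the $v$‑local terms of $\mathfrak{a}_{L/K}$ and $\mathfrak{c}_{L/K}$ coincide for each wildly ramified $v$.

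Fixing such a $v$, with decomposition group $D$ and inertia group $I$ in $G$, I would use the hypotheses that $D$ is abelian, $I$ is cyclic, $K_v/\QQ_p$ is absolutely unramified and $p$ is odd in order to invoke the results of Bley and Cobbe \cite{BC} — which rest on the explicit construction by Pickett and Vinatier \cite{PV} of a normal basis generator of the completion of $\mathcal{A}_{L/K}$ at $v$ that is self‑dual for the local trace form — to express the $v$‑component of $\mathfrak{a}_{L/K}$ through local twisted Galois‑Gauss sums of characters of $D$.

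The main obstacle is then to identify this expression with the $v$‑component of $\mathfrak{c}_{L/K}$, which is governed by the genuinely unramified (Frobenius‑type) data. Because $|G|$, and hence $|D|$, may be even, the second Adams operator $\psi_2$ need not commute with induction from $D$, so the purely formal manipulations that suffice in the odd‑degree setting of \cite{BBH} are unavailable: one is left having to compute the Artin root numbers of $\psi_2$‑twists of the symplectic characters of $G$ that are induced from $D$. To do this I would appeal to the analysis in \cite{AC} of the precise extent to which $\psi_2$ fails to commute with induction, together with the explicit evaluation of root numbers of dihedral characters by Fr\"ohlich and Queyrut \cite{FQ73}: since $D$ is abelian every relevant constituent is monomial, so these two inputs determine the required root numbers and, fed back into the local comparison, show that the $v$‑components of $\mathfrak{a}_{L/K}$ and $\mathfrak{c}_{L/K}$ agree. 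Summing over the wildly ramified places gives $\mathfrak{a}_{L/K}=\mathfrak{c}_{L/K}$ in $K_0(\ZZ[G],\QQ^c[G])$, which is claim (i).

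For claim (ii), I would apply the connecting homomorphism $\partial\colon K_0(\ZZ[G],\QQ^c[G])\to\Cl(\ZZ[G])$ to the equality of (i). The generalisation of \cite[Th. 5.2]{BBH} proved in the body of the paper identifies $\partial(\mathfrak{a}_{L/K})$ with the difference $\Omega(L/K,2)-[\mathcal{A}_{L/K}]-\mathfrak{w}$ for a suitable Galois‑Gauss sum (root number) class $\mathfrak{w}$, and the additional hypothesis — that the inertia degree of each wildly ramified $v$ is prime to $[K_v:\QQ_p]$ — is precisely what is needed to force the corresponding wildly ramified local contributions to $\partial(\mathfrak{c}_{L/K})$ to vanish, leaving $\partial(\mathfrak{c}_{L/K})=\mathfrak{w}+\mathcal{J}_{2,S,L/K}$. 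Comparing the two then yields $\Omega(L/K,2)=[\mathcal{A}_{L/K}]+\mathcal{J}_{2,S,L/K}$, which is Conjecture \ref{C:boas}. I expect the delicate point here to be the exact bookkeeping of the wildly ramified and archimedean contributions to $\partial(\mathfrak{c}_{L/K})$ — in particular verifying that the coprimality hypothesis annihilates the entire local unramified contribution and isolates precisely the symplectic sign class $\mathcal{J}_{2,S,L/K}$.
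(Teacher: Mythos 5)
Your skeleton for claim (i) --- decompose both elements into local contributions and compare place by place --- is indeed the paper's strategy (via Proposition \ref{Lemma: frak a local decomp} and Definition \ref{idelic twisted def}), and for the wildly ramified places the paper does exactly what you suggest: it observes that the computations of Bley--Cobbe and Pickett--Vinatier require only that the decomposition group be abelian with cyclic inertia of odd order, not that the degree be odd (Lemma \ref{Lemma: local weakly rami main}). However, your appeal to \cite{AC} and Fr\"ohlich--Queyrut \cite{FQ73} at the wild places is misplaced: the comparison there is carried out entirely over the abelian local Galois group, where $\psi_2$ causes no induction problems, and the only induction-theoretic subtlety (in decomposing $\mathfrak{a}_{L/K}$ itself) is handled by Lemma \ref{Prop: decomp pre prop} using Holland--Wilson and Cassou-Nogu\`es--Taylor, not by root-number computations. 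The genuine gap in your treatment of (i) is at the \emph{tame} places: you dismiss them by citing the tame theory of \cite{ET} as ``already incorporated into \cite{BBH}'', but those are odd-degree results, whereas here a tame local extension admitting $\mathcal{A}_{E/F}$ may have even degree. The paper must prove afresh that $\mathfrak{a}_{E/F}=\mathfrak{c}_{E/F}=0$ in this generality (Theorem \ref{Th: tame local main}): it uses that existence of $\mathcal{A}_{E/F}$ forces the inertia group to have odd order, reduces resolvents and twisted Gauss sums to the inertia subgroup in the style of Fr\"ohlich (Proposition \ref{Prop: local ideal}), applies Erez's odd-degree theorem to $E/B$, and concludes with Taylor's fixed point theorem. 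Without this step your argument for (i) is incomplete.

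Your derivation of claim (ii) cannot work as written. The image of $\mathfrak{a}_{L/K}$ in $\Cl(\ZZ[G])$ is $[\mathcal{A}_{L/K}]+W^{(2)}_{L/K}$ (Proposition \ref{frak a global independence}(iv)); neither \cite[Th. 5.2]{BBH} nor its generalisation here relates $\partial(\mathfrak{a}_{L/K})$ to $\Omega(L/K,2)$, so your asserted identification $\partial(\mathfrak{a}_{L/K})=\Omega(L/K,2)-[\mathcal{A}_{L/K}]-\mathfrak{w}$ has no source and in effect presupposes the hard arithmetic input. In the paper $\Omega(L/K,2)$ enters only through the Bley--Cobbe theorem $\Omega(L/K,2)=W_{L/K}$ for these weakly ramified extensions (Proposition \ref{Prop: chin conj BC}), and that is precisely where the coprimality hypothesis on the inertia degree is used --- not, as you claim, to force wild contributions to $\partial(\mathfrak{c}_{L/K})$ to vanish; the vanishing of $\partial^0_{\ZZ,\QQ,G}(\mathfrak{c}_{L/K})$ is proved already from odd residue characteristic and abelian decomposition groups (Lemma \ref{Prop: local ab frak c proj}(ii)). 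You are also missing the identification $W^{(2)}_{L/K}=W_{L/K}+\mathcal{J}_{2,S,L/K}$ (Lemma \ref{Prop: local ab frak c proj}(i), via the Bley--Burns epsilon constant results and the sign description \eqref{description of image nrd RR}), which is the step that isolates the symplectic class $\mathcal{J}_{2,S,L/K}$; combining it with $\partial^0_{\ZZ,\QQ,G}(\mathfrak{a}_{L/K})=[\mathcal{A}_{L/K}]+W^{(2)}_{L/K}$, the equality of (i), and Bley--Cobbe is how the paper actually obtains Conjecture \ref{C:boas}.
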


We remark that claim (ii) of this result removes the locally abelian hypothesis from the main result of Caputo and Vinatier in \cite{CV}. 

We also note that, whilst the result for tamely ramified cases of claim (ii) was first proved in the jointly authored article \cite{AC}, the argument that we present here closely follows the classical methods of Erez in \cite{E} (see \S\ref{S: weakly classgroup}) rather than the methods of Agboola and McCulloh in \cite{AM} and so is different in key respects from that given in \cite{AC}. 


Finally, we note that the result of claim (i) of Theorem \ref{last intro thm} makes it natural to wonder whether, with our generalized definitions of the elements $\mathfrak{a}_{L/K}$ and $\mathfrak{c}_{L/K}$, the equality $\mathfrak{a}_{L/K} = \mathfrak{c}_{L/K}$ that is conjectured by Bley, Burns and Hahn should be valid for \emph{all} weakly ramified Galois extensions $L/K$ for which the fractional ideal $\mathcal{A}_{L/K}$ exists. 

This is a question that we feel deserves further careful attention. 

However, since we have no evidence for an affirmative answer to it that goes beyond that in Theorem \ref{last intro thm} and, in particular, we lack the sort of numerical evidence that is provided in \cite{BBH}, we shall end this introduction by simply posing the following question. 

\begin{question} Let $L/K$ be a finite weakly ramified Galois extension of number fields for which $\mathcal{A}_{L/K}$ exists and set $G := \Gal(L/K)$. Then, in $K_0(\ZZ[G],\QQ^c[G])$, does one always have
	\[ \mathfrak{a}_{L/K} = \mathfrak{c}_{L/K}?\]  
\end{question}

\vskip 0.2truein \noindent{}{\bf Acknowledgements} I would like to thank David Burns for many helpful discussions and for insightful suggestions. I am very grateful to Adebisi Agboola, Luca Caputo and St\'ephane Vinatier for stimulating discussions concerning related works. 
\vskip 0.2truein

\section{Preliminary}

\subsection{Notations}\label{S: notation}
For a unital ring $A$, we write $A^\times$ for the multiplicative group of invertible elements of $A$ and $\zeta(A)$ for the centre of $A$. By an $A$-module, we shall always mean a left $A$-module. For each pair of $A$-modules $M$ and $M'$ we write ${\rm Is}_{A}(M, M')$ for the set of $A$-module isomorphisms from $M$ to $M'$ and ${\rm Aut}_{A}(M)$ for the group of $A$-module automorphisms of $M$. We also let $\mathcal{P}(A)$ denote the category of finitely generated projective $A$-modules.

Fix a finite group $\Gamma$, and a Dedekind domain $R$ of characteristic zero, with field of fractions $F$, and $E$ is an extension field of $F$. We let $\mathcal{O}_{E}$ denote the ring of integers of a field $E$. For any $R[\Gamma]$-module $M$, we write $M_{v}:=R_{v}\otimes_{R}M$, with $R_{v}$ the completion of $R$ at $v$, where $v$ is a (non-zero) prime ideal of $R$. 

Let $E/F$ be a finite Galois extension of fields. We let ${\rm Gal}(E/F)$ denote the Galois group of $E/F$, and we write the action of ${\rm Gal}(E/F)$ on $E$ by $x \mapsto g(x)$ for $x \in E$ and $g\in {\rm Gal}(E/F)$. We fix a separable closure $F^c$ of $F$, and write $\Omega_{F}$ for the absolute Galois group ${\rm Gal}(F^c/F)$ of $F$. 
For convenience, we take $\mathbb{Q}^c$ to be the algebraic closure of $\mathbb{Q}$ in $\mathbb{C}$. 

Throughout this article, we say $F$ is a `number field' if it is a finite extension of $\mathbb{Q}$ contained in $\mathbb{Q}^c$ and that $F$ is a `local field' if it is a finite extension of $\mathbb{Q}_v$ for some place $v$.

For a finite $\Gamma$, we write $\widehat{\Gamma}$ (resp. $\widehat{\Gamma}_\ell$) for the set of $\QQ^c$-valued (resp. $\QQ^c_\ell$-valued) irreducible characters of $\Gamma$ and $R_\Gamma$ (resp. $R_{\Gamma, \ell}$) for the additive group generated by $\widehat{\Gamma}$ (resp. $\widehat{\Gamma}_\ell$).

\subsection{Algebraic $K$-theory}
We fix a finite group $\Gamma$ and a Dedekind domain $R$ of characteristic zero with $F: = {\rm Frac}(R)$, and $E$ is a finite field extension of $F$. 

\subsubsection{The long exact sequence of relative $K$-theory}

We write $K_{0}(R[\Gamma], E[\Gamma])$ for the algebraic $K_0$-group of the ring inclusion $R[\Gamma] \xrightarrow{\subset} E[\Gamma]$ described in \cite[p. 215]{swan} and we recall that it is generated by the symbols $[P, \phi, Q]$ where $P$, $Q \in \mathcal{P}(R[\Gamma])$ and $\phi \in {\rm Is}_{E[\Gamma]}(P_{E} , Q_{E})$. We also recall that there is a canonical decomposition
\begin{equation}\label{Eq: KT iso K0}
K_{0}(R[\Gamma], F[\Gamma])\cong \bigoplus_{v}K_{0}(R_{v}[\Gamma], F_{v}[\Gamma]), 
\end{equation}
where $v$ runs over all non-Archimedean places of $F$. This isomorphism is induced by the diagonal localisation homomorphism $(\pi_{\Gamma, v})_v$ (cf. discussion below \cite[(49.12)]{CR2}), where for each non-zero prime ideal $v$ of $R$, we write 
\begin{equation}\label{K_0 proj K_0_p}
\pi_{\Gamma, v}:  K_{0}(R[\Gamma], F[\Gamma]) \rightarrow K_{0}(R_{v}[\Gamma], F_{v}[\Gamma])
\end{equation}
for the homomorphism that sends the class of $[P, \phi, Q]$ to the class of $[P_{v}, F_{v}\otimes_{F}\phi, Q_{v}]$.

We also write $K_1(R[\Gamma])$ (resp. $K_1(E[\Gamma])$) for the Whitehead group of the ring $R[\Gamma]$ (resp. $E[\Gamma]$) and ${\rm Cl}(\ZZ[\Gamma])$ for the reduced projective class group of $R[\Gamma]$ (as in \cite[\S49A]{CR2}). In particular, we note that the reduced projective classgroup of $R[\Gamma]$ is isomorphic to the locally free classgroup of $R[\Gamma]$ (see Remark (49.11)(iv) of loc.cit., we therefore do not distinguish the notation of these two groups for the case of $R[\Gamma]$), and the latter group plays an important part in the `classical' Galois module theory (see \cite[Chap. I, \S2]{F83} for more details).

Then there exists a commutative diagram (taken from \cite[Th. 15.5]{swan})
\begin{equation} \label{Diagram: Kseq}
	\begin{CD} K_1(R[\Gamma]) @> >> K_1(E[\Gamma]) @> \partial^1_{R,E,\Gamma} >> K_0(R[\Gamma],E[\Gamma]) @> \partial^0_{R,E,\Gamma} >> {\rm Cl}(R[\Gamma])\\
		@\vert @A\iota_1 AA @A\iota_2 AA @\vert\\
		K_1(R[\Gamma]) @> >> K_1(F[\Gamma]) @> \partial^1_{R,F,\Gamma}  >> K_0(R[\Gamma],F[\Gamma]) @> \partial^0_{R,F,\Gamma}  >> {\rm Cl}(R[\Gamma]).
	\end{CD}
\end{equation}
in which the rows are the respective long exact sequences of relative $K$-theory (with the morphisms in the second row being completely analogous to those in the first): the homomorphism $\partial^{1}_{R,E,\Gamma}$ sends each pair $[E[\Gamma]^{n}, \phi]$, with $\phi$ in $\mathrm{Aut}_{E[\Gamma]}(E[\Gamma]^{n})$, to the class of $[E[\Gamma]^{n}, \phi, E[\Gamma]^{n}]$; for each pair $P$ and $Q$ in $\mathcal{P}(R[\Gamma])$ and each isomorphism of $E[\Gamma]$-modules $\phi: P_{E}\cong Q_{E}$, the homomorphism $\partial^{0}_{R,E,\Gamma}$ sends the class of $[P, \phi , Q]$ to the difference $[P]-[Q]$. For the vertical maps, $\iota_{1}$ and $\iota_{2}$ are the natural scalar extension morphisms (these maps are injective and will usually be regarded as inclusions).

\subsubsection{The reduced norm map}
Suppose $E$ is either a number field or a $p$-adic field (for some prime $p$), the `reduced norm' map discussed by Curtis and Reiner in \cite[45A]{CR2} induces an injective homomorphism of abelian groups 
\[ \mathrm{Nrd}_{E[\Gamma]}: K_{1}(E[\Gamma])\to \zeta (E[\Gamma]) ^{\times}. \]
We then define a subgroup of $\zeta(E[\Gamma])^\times$ by setting  
\[ \zeta (E[\Gamma]) ^{\times+} := {\rm im}(\mathrm{Nrd}_{E[\Gamma]}),\]
and we make much use of the following facts about this group.

\begin{lemma}[{The Hasse-Schilling-Maass Norm Theorem}]\label{Prop: Nrd bijective cond}\
	\begin{itemize}
		\item[(i)] $\zeta (E[\Gamma]) ^{\times+}$ contains $(\zeta (E[\Gamma]) ^{\times})^2$. 
		
		\item[(ii)] One has $\zeta (E[\Gamma]) ^{\times+} = \zeta (E[\Gamma]) ^{\times}$ in each of the following cases.
		\begin{itemize}
			\item[(a)] $E$ is algebraically closed;
			\item[(b)] $E$ is a number field and $\Gamma$ has no irreducible symplectic characters;
			\item[(c)] $E$ is $p$-adic (for any prime number $p$). 
		\end{itemize}
		\item[(iii)] If $E=\QQ$, then $\zeta(E[\Gamma])^{\times +} = \zeta(E[\Gamma])^{\times} \cap \zeta(\RR[\Gamma])^{\times+}$.
	\end{itemize}
\end{lemma}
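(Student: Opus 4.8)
The plan is to decompose the semisimple algebra $E[\Gamma]$ into its Wedderburn components and then reduce all three assertions to the classical Hasse--Schilling--Maass description of the image of the reduced norm on a central simple algebra over a number field. First I would use that, $E$ having characteristic zero, Maschke's theorem gives a decomposition $E[\Gamma] \cong \prod_{i} A_i$ into simple $E$-algebras with $A_i \cong M_{n_i}(D_i)$, where $D_i$ is a finite-dimensional division algebra with centre a finite extension $E_i/E$. This identifies $\zeta(E[\Gamma])^\times$ with $\prod_i E_i^\times$ and, via Morita invariance of $K_1$ and compatibility of the reduced norm with the Wedderburn decomposition (see \cite[\S45]{CR2}), identifies $\mathrm{Nrd}_{E[\Gamma]}$ with the product of the maps $\mathrm{Nrd}_{A_i}\colon K_1(A_i) \xrightarrow{\sim} K_1(D_i) \to E_i^\times$. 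Thus it is enough to describe $\im(\mathrm{Nrd}_{A_i}) \subseteq E_i^\times$ for each $i$, and here I would invoke the Hasse--Schilling--Maass theorem \cite[\S45]{CR2} (see also \cite{swan}): when $E_i$ is a number field, $\im(\mathrm{Nrd}_{A_i})$ equals the subgroup $E_i^{+}$ of elements $x \in E_i^\times$ that are positive under every real embedding $E_i \hookrightarrow \RR$ at which $D_i$ is ramified --- equivalently, at which $A_i \otimes_{E_i} \RR$ is a matrix algebra over the Hamilton quaternions $\mathbb{H}$ --- whereas when $E_i$ is $p$-adic or algebraically closed the map $\mathrm{Nrd}_{A_i}$ is surjective (in the split case it is literally a product of determinant maps).

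Granting this, claim (i) follows because for a number field $E_i$ and any $x \in E_i^\times$ the element $x^2$ is positive under \emph{every} real embedding of $E_i$, hence in particular at the ramified ones, so $x^2 \in E_i^{+} = \im(\mathrm{Nrd}_{A_i})$; taking the product over $i$ gives $(\zeta(E[\Gamma])^\times)^2 \subseteq \zeta(E[\Gamma])^{\times+}$ (the $p$-adic case of (i) being a special case of (ii)(c)). Claims (ii)(a) and (ii)(c) are exactly the surjectivity statements recalled above, applied componentwise. For (ii)(b) I would argue that surjectivity of some $\mathrm{Nrd}_{A_i}$ can fail only if $D_i$ ramifies at a real embedding $w$ of $E_i$; by Frobenius' classification of division algebras over $\RR$ this forces $A_i \otimes_{E_i,w} \RR \cong M_m(\mathbb{H})$, so the complex irreducible constituent of the character of $A_i$ is a real-valued character of $\Gamma$ with Schur index $2$ over $\RR$, i.e. an irreducible symplectic character of $\Gamma$; hence if $\Gamma$ has no such character then every $\mathrm{Nrd}_{A_i}$ is onto $E_i^\times$.

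For claim (iii), take $E = \QQ$ and observe that $\RR[\Gamma] = \QQ[\Gamma]\otimes_\QQ\RR$ decomposes as $\prod_i\prod_{w\mid\infty\text{ of }E_i} A_i\otimes_{E_i,w}\RR$, each factor a matrix algebra over $\RR$, $\CC$ or $\mathbb{H}$. Computing $\mathrm{Nrd}$ on each factor --- the determinant on the $\RR$- and $\CC$-factors, and the reduced norm on the $\mathbb{H}$-factors (which has image $\RR_{>0}$) --- shows that $\zeta(\RR[\Gamma])^{\times+}$ is the set of tuples whose component at each real place $w$ with quaternionic factor is positive. Intersecting this with $\zeta(\QQ[\Gamma])^\times \cong \prod_i E_i^\times$, diagonally embedded via the maps $E_i \hookrightarrow \prod_{w\mid\infty}(E_i)_w$, picks out exactly the tuples $(x_i)$ such that each $x_i$ is positive at every real place of $E_i$ at which $D_i$ is ramified. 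By the Hasse--Schilling--Maass description recalled in the first step, this intersection is precisely $\prod_i E_i^{+} = \im(\mathrm{Nrd}_{\QQ[\Gamma]}) = \zeta(\QQ[\Gamma])^{\times+}$, as required.

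The one genuinely delicate point, used in (ii)(b) and (iii), is the translation between ``$D_i$ ramifies at a real place of $E_i$'' and ``$\Gamma$ possesses an irreducible symplectic character'': one must match the Frobenius--Schur indicator $-1$ of a complex irreducible character with the occurrence of an $\mathbb{H}$-factor in $A_i\otimes_\QQ\RR$. This is a standard but slightly fussy piece of the real representation theory of finite groups; granting it, the remaining arguments are bookkeeping with the Wedderburn decomposition together with the cited classical theorem.
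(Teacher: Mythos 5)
Your proposal is correct and follows essentially the route the paper intends: the paper states this lemma without proof as the classical Hasse--Schilling--Maass theorem (citing Curtis--Reiner \cite[\S 45A]{CR2}), and your reduction via the Wedderburn decomposition of $E[\Gamma]$, the componentwise description of $\im(\mathrm{Nrd}_{A_i})$ by positivity at ramified real places, and the identification of quaternionic real components with irreducible symplectic characters is exactly the standard argument underlying it (and underlying the explicit description \eqref{description of image nrd RR} used later in the paper). The ``delicate point'' you flag (Frobenius--Schur indicator $-1$ versus an $\mathbb{H}$-factor of $A_i\otimes_\QQ\RR$) is indeed the only representation-theoretic input beyond bookkeeping, and it is standard.
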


\subsubsection{The extended boundary map of Burns and Flach}\label{S: extended boundary map}

The following construction (introduced by Burns and Flach in \cite[\S4.2]{BF}) is key to the formulation of arithmetic conjectures in relative $K$-groups. 

\begin{lemma}\label{ext bound hom} There exists a canonical homomorphism 
	\[ \delta_{\Gamma}: \zeta(\mathbb{Q}[\Gamma])^\times \to K_0(\mathbb{Z}[\Gamma],\mathbb{Q}[\Gamma])\]
	of abelian groups that has all of the following properties. 
	
	\begin{itemize} 
		\item[(i)] The connecting homomorphism $\partial^1_{\ZZ,\QQ,\Gamma}$ in (\ref{Diagram: Kseq}) is equal to the composite $\delta_{\Gamma}\circ {\rm Nrd}_{\QQ[\Gamma]}$.
		\item[(ii)] If $x$ belongs to $\zeta(\QQ[\Gamma])^{\times +}$, then one has $\delta_\Gamma(x) = \partial^1_{\ZZ,\QQ,\Gamma}\bigl(({\rm Nrd}_{\QQ[\Gamma]})^{-1}(x)\bigr)$. In particular, if ${\rm Nrd}_{\QQ[\Gamma]}$ is bijective, then $\delta_\Gamma = \partial^1_{\ZZ,\QQ,\Gamma}\circ ({\rm Nrd}_{\QQ[\Gamma]})^{-1}$. 
		\item[(iii)] Fix a prime $\ell$ and write $j_{\ell, *}$ for the natural projection map $K_0(\ZZ[\Gamma],\QQ[\Gamma]) \to K_0(\ZZ_\ell[\Gamma],\QQ_\ell[\Gamma])$ (as in \eqref{K_0 proj K_0_p}). Then, regarding $\zeta(\QQ[\Gamma])^\times$ as (an obvious) subgroup of $\zeta(\QQ_\ell[\Gamma])^\times$, one has 
		\[ j_{\ell, *}\circ \delta_\Gamma = \partial^1_{\ZZ_\ell,\QQ_\ell,\Gamma}\circ ({\rm Nrd}_{\QQ_\ell[\Gamma]})^{-1}.\]  
	\end{itemize}
\end{lemma}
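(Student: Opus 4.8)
\emph{Proof proposal.} The plan is to build $\delta_\Gamma$ one local component at a time, exploiting the canonical decomposition \eqref{Eq: KT iso K0}, which identifies $K_0(\ZZ[\Gamma],\QQ[\Gamma])$ with $\bigoplus_\ell K_0(\ZZ_\ell[\Gamma],\QQ_\ell[\Gamma])$, the sum running over the finite primes $\ell$. Fix $x$ in $\zeta(\QQ[\Gamma])^\times$ and, for each $\ell$, write $x_\ell$ for the image of $x$ under the obvious localisation embedding $\zeta(\QQ[\Gamma])^\times \hookrightarrow \zeta(\QQ_\ell[\Gamma])^\times$. Since $\QQ_\ell$ is an $\ell$-adic field, Lemma \ref{Prop: Nrd bijective cond}(ii)(c) shows that $\mathrm{Nrd}_{\QQ_\ell[\Gamma]}$ is bijective, so $(\mathrm{Nrd}_{\QQ_\ell[\Gamma]})^{-1}(x_\ell)$ is a well-defined element of $K_1(\QQ_\ell[\Gamma])$, and I set $\delta_{\Gamma,\ell}(x) := \partial^1_{\ZZ_\ell,\QQ_\ell,\Gamma}\bigl((\mathrm{Nrd}_{\QQ_\ell[\Gamma]})^{-1}(x_\ell)\bigr)$ in $K_0(\ZZ_\ell[\Gamma],\QQ_\ell[\Gamma])$. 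I would then define $\delta_\Gamma(x)$ to be the element of $K_0(\ZZ[\Gamma],\QQ[\Gamma])$ corresponding under \eqref{Eq: KT iso K0} to the family $(\delta_{\Gamma,\ell}(x))_\ell$; this is the construction of Burns and Flach in \cite[\S4.2]{BF}.

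For this to make sense one must first check that $\delta_{\Gamma,\ell}(x) = 0$ for all but finitely many $\ell$. Here one uses that $\zeta(\QQ[\Gamma])$ is a finite product of number fields, so that $x_\ell$ lies in the unit group of the maximal $\ZZ_\ell$-order of $\zeta(\QQ_\ell[\Gamma])$ for all but finitely many $\ell$; for such an $\ell$ that in addition does not divide $|\Gamma|$, the order $\ZZ_\ell[\Gamma]$ is maximal, and the standard description of $K_1$ of maximal orders over a complete discrete valuation ring (cf.\ \cite{CR2}) shows that $\mathrm{Nrd}_{\QQ_\ell[\Gamma]}$ carries the image of $K_1(\ZZ_\ell[\Gamma]) \to K_1(\QQ_\ell[\Gamma])$ onto precisely that unit group; hence $(\mathrm{Nrd}_{\QQ_\ell[\Gamma]})^{-1}(x_\ell)$ lies in that image, which is the kernel of $\partial^1_{\ZZ_\ell,\QQ_\ell,\Gamma}$ in the local long exact sequence of relative $K$-theory, so $\delta_{\Gamma,\ell}(x) = 0$. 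Since localisation, $(\mathrm{Nrd}_{\QQ_\ell[\Gamma]})^{-1}$ and $\partial^1_{\ZZ_\ell,\QQ_\ell,\Gamma}$ are all homomorphisms of abelian groups, so is each $\delta_{\Gamma,\ell}$, and hence so is $\delta_\Gamma$; and the construction involves no choices, so $\delta_\Gamma$ is canonical.

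Property (iii) is then immediate: $j_{\ell,*}$ is by definition the projection of \eqref{Eq: KT iso K0} onto its $\ell$-th summand, and that summand of $\delta_\Gamma(x)$ is precisely $\partial^1_{\ZZ_\ell,\QQ_\ell,\Gamma}\bigl((\mathrm{Nrd}_{\QQ_\ell[\Gamma]})^{-1}(x_\ell)\bigr)$, i.e.\ the value at the image of $x$ in $\zeta(\QQ_\ell[\Gamma])^\times$ of $\partial^1_{\ZZ_\ell,\QQ_\ell,\Gamma}\circ (\mathrm{Nrd}_{\QQ_\ell[\Gamma]})^{-1}$. For property (i), let $y \in K_1(\QQ[\Gamma])$ and put $x := \mathrm{Nrd}_{\QQ[\Gamma]}(y)$; by the evident compatibility of reduced norms with the scalar-extension maps $K_1(\QQ[\Gamma]) \to K_1(\QQ_\ell[\Gamma])$ one has $x_\ell = \mathrm{Nrd}_{\QQ_\ell[\Gamma]}(y_\ell)$, with $y_\ell$ the image of $y$, hence $(\mathrm{Nrd}_{\QQ_\ell[\Gamma]})^{-1}(x_\ell) = y_\ell$ and $\delta_{\Gamma,\ell}(x) = \partial^1_{\ZZ_\ell,\QQ_\ell,\Gamma}(y_\ell)$; on the other hand the compatibility of the connecting maps with localisation (the local analogue of \eqref{Diagram: Kseq} together with \eqref{Eq: KT iso K0}) identifies $\partial^1_{\ZZ_\ell,\QQ_\ell,\Gamma}(y_\ell)$ with the $\ell$-component of $\partial^1_{\ZZ,\QQ,\Gamma}(y)$. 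As these agree for every $\ell$ and \eqref{Eq: KT iso K0} is injective, one gets $\delta_\Gamma(\mathrm{Nrd}_{\QQ[\Gamma]}(y)) = \partial^1_{\ZZ,\QQ,\Gamma}(y)$, which is (i). Finally, (ii) is formal: if $x$ lies in $\zeta(\QQ[\Gamma])^{\times+} = \mathrm{im}(\mathrm{Nrd}_{\QQ[\Gamma]})$, choose $y$ with $\mathrm{Nrd}_{\QQ[\Gamma]}(y) = x$; by injectivity of $\mathrm{Nrd}_{\QQ[\Gamma]}$ this $y$ equals $(\mathrm{Nrd}_{\QQ[\Gamma]})^{-1}(x)$, so (i) gives $\delta_\Gamma(x) = \partial^1_{\ZZ,\QQ,\Gamma}\bigl((\mathrm{Nrd}_{\QQ[\Gamma]})^{-1}(x)\bigr)$, with the ``in particular'' clause being the case in which $\mathrm{Nrd}_{\QQ[\Gamma]}$ is moreover surjective, so that this holds for every $x$.

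The step I expect to require the most care is the finiteness assertion in the second paragraph --- that $\delta_\Gamma(x)$ genuinely lies in the direct sum $K_0(\ZZ[\Gamma],\QQ[\Gamma])$ rather than in the full product over all $\ell$. This hinges on the arithmetic input that, at primes $\ell \nmid |\Gamma|$, reduced norms detect $K_1(\ZZ_\ell[\Gamma])$ as the units of the maximal $\ZZ_\ell$-order of $\zeta(\QQ_\ell[\Gamma])$; everything else is a bookkeeping exercise in the functoriality under localisation of reduced norms and of the connecting maps of relative $K$-theory.
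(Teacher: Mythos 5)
Your construction is correct and is essentially the one the paper appeals to: the paper gives no proof of its own, simply citing \cite[\S4.2]{BF}, and your componentwise definition agrees with the explicit recipe recorded in Remark~\ref{Rem: extend bound = red norm}, since $\partial^1_{\ZZ,\QQ,\Gamma}\bigl(({\rm Nrd}_{\QQ[\Gamma]})^{-1}(x^2)\bigr)$ has $\ell$-components $2\,\partial^1_{\ZZ_\ell,\QQ_\ell,\Gamma}\bigl(({\rm Nrd}_{\QQ_\ell[\Gamma]})^{-1}(x_\ell)\bigr)$, so that recipe collapses to your sum of local terms. The one genuinely delicate point, that $\delta_{\Gamma,\ell}(x)$ vanishes for almost all $\ell$ so that the family lies in the direct sum \eqref{Eq: KT iso K0}, you identify and justify correctly via the reduced norm description of the image of $K_1(\ZZ_\ell[\Gamma])$ for $\ell\nmid|\Gamma|$; note also that property (iii) together with the isomorphism \eqref{Eq: KT iso K0} determines $\delta_\Gamma$ uniquely, so the map you build is indeed the canonical Burns--Flach homomorphism.
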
 

\begin{remark}\label{Rem: extend bound extend to R} The homomorphism $\delta_\Gamma$ in Lemma \ref{ext bound hom} extends to give a group homomorphism $\zeta(\mathbb{R}[\Gamma])^\times \to K_0(\mathbb{Z}[\Gamma],\mathbb{R}[\Gamma])$ (this was the original construction of Burns and Flach) but not, in general, to a group homomorphism 
	$\zeta(\mathbb{C}[\Gamma])^\times \to K_0(\mathbb{Z}[\Gamma],\mathbb{C}[\Gamma])$ (see, for example, Breuning \cite[Prop. 2.11]{B_phd}).\end{remark}

\begin{remark}\label{Rem: extend bound = red norm} Following the explicit recipe given in \cite[\S4.2, Lem. 9]{BF} 
for each $x$ in $\zeta (\QQ[\Gamma])^\times$, there is an equality
	\[\delta_\Gamma(x) = \partial^1_{\ZZ,\QQ,\Gamma}\bigl(({\rm Nrd}_{\QQ[\Gamma]})^{-1}(x^2)\bigr) - \sum_\ell \partial^1_{\ZZ_\ell,\QQ_\ell,\Gamma}\bigl(({\rm Nrd}_{\QQ_\ell[\Gamma]})^{-1}(x_\ell)\bigr).\]
	where in the sum $\ell$ runs over all primes, $x_\ell$ denotes $x$ regarded as an element of $\zeta(\QQ_\ell[\Gamma])^\times = \zeta(\QQ_\ell[\Gamma])^{\times+}$. Here we use the fact that $x^2$ belongs to $\zeta(\QQ[\Gamma])^{\times +}$ (by Lemma \ref{Prop: Nrd bijective cond}(i)),	and regard each group $K_0(\ZZ_\ell[\Gamma],\QQ_\ell[\Gamma])$ as a subgroup of 
	$K_0(\ZZ[\Gamma],\QQ[\Gamma])$ by the canonical decomposition (\ref{Eq: KT iso K0}). 
\end{remark}

\subsubsection{Parametrizing central elements}

Let $E$ be an algebraically closed field of characteristic $0$. We fix a finite group $\Gamma$, and write $\widehat{\Gamma}:=\widehat{\Gamma}(E)$ for the set of $E$-valued irreducible characters of $\Gamma$ and $R_\Gamma$ for the additive group generated by $\widehat{\Gamma}$. For each $\chi$ in $\widehat{\Gamma}$, we obtain a primitive idempotent of the centre $\zeta(E[\Gamma])$ of $E[\Gamma]$ by setting $e_\chi =\frac{\chi(1)}{|\Gamma|} \sum_{g\in \Gamma}\chi(g^{-1})g$. In addition, the standard orthogonality relations for irreducible characters implies that the set $\{e_\chi\}_{\chi \in \widehat{\Gamma}}$ is an $E$-basis of $\zeta(E[\Gamma])$ (see, for example, \cite[Chap. 6.3, Exer. 6.4]{S1}). 

It follows that each element of $\zeta(E[\Gamma])$, respectively $\zeta(E[\Gamma])^\times$, can be written uniquely in the form
\begin{equation}\label{Eq: element in centre}
	x= \sum_{\chi\in \widehat{\Gamma}}e_{\chi}\cdot x_{\chi} , \quad \text{ with $x_\chi\in E$, respectively 
		$x_\chi \in E^\times$, for all $\chi$}.
\end{equation}

In later arguments, we will use the following two consequences of this decomposition. Firstly, the horizontal isomorphism in \cite[Chap. II, \S1, Lem. 1.6]{F83} implies that, for every $u$ in $\mathrm{GL}_{n}(E[\Gamma])$ and every $\chi$ in $\widehat{\Gamma}$, one has ${\rm Det}(u)_\chi = {\rm Det}_\chi (u) = {\rm Nrd}_{E[\Gamma]}(u)_\chi \in E$. Secondly, the proof of Lemma~\ref{Prop: Nrd bijective cond} implies that ${\rm im}(\mathrm{Nrd}_{\RR[\Gamma]}) = \zeta (\RR[\Gamma])^{\times+} $ is equal to the subgroup of $\zeta(\CC[\Gamma])^{\times}$ that is defined by the following explicit conditions on the individual coefficients $x_\chi$: 
\begin{equation}\label{description of image nrd RR}
\left\{
\sum_{\chi\in \widehat{\Gamma}}e_{\chi}\cdot x_{\chi}\ \middle\vert \begin{array}{l}
x_{\bar{\chi}} = \overline{x_\chi} \; \text{  for all  } \; \chi \in \widehat{\Gamma}(\CC); \\
\text{ and } x_{\chi}>0 \; \text{  for all  } \; \chi\in {\rm Symp}(\Gamma),
\end{array}\right\},
\end{equation}
where $\overline{x_\chi}$ denotes the complex conjugation of $x_\chi$ and ${\rm Symp}(\Gamma)$ denotes the set of irreducible {\em complex} symplectic characters of $\Gamma$.

\subsubsection{Induction functors}\label{S: K_0 functors}
We let $R$ denote either $\ZZ$ or $\ZZ_\ell$ for a prime number $\ell$, and $F$ denotes the corresponding fields $\QQ$ or $\QQ_\ell$. Suppose $E$ is a finite field extension of $F$ contained in $F^c$. Let $G$ be a finite group, and let $J$ be a subgroup of $G$, we let ${\rm res}^G_J$ denote the restriction homomorphism $R_G \rightarrow R_J$.

We write ${\rm i}^{G}_J$ for the induction functor of $\mathcal{P}(R[J]) \rightarrow \mathcal{P}(R[G])$ via applying $R[G] \otimes_{R[J]}$, and similarly for the $E[G]$-module, noting that it preserves isomorphisms and short exact sequences. This functor induces a homomorphism of relative $K$-groups that we denote by 
\begin{align}\label{Eq: K_0 ind def}
{\rm i}^{G, *}_{J, E} : K_{0}(R[J], E[J])   \, & \rightarrow  K_{0}(R[G], E[G]),\\
[P, \phi, Q] \, & \mapsto [{\rm i}^{G}_J P, {\rm i}^{G}_J \phi, {\rm i}^{G}_J Q].\nonumber 
\end{align}

We also define a map $\mathrm{\tilde{i}}_{J}^{G}: \zeta(F^c[J])^{\times} \rightarrow \zeta(F^c[G])^{\times}$ by setting, for each $x \in \zeta(F^c[J])^{\times} $ and $\chi \in \widehat{G}$ (in terms of the decomposition in \eqref{Eq: element in centre}),
\begin{equation}\label{Eq: center ind def}
	\mathrm{\tilde{i}}_{J}^{G} (x)_{\chi}= \prod_{\varphi \in \widehat{J}} x_{\varphi}^{<\mathrm{res}^{G}_{J} \chi , \varphi >_{J}}. 
\end{equation} 

Set $\delta_{R, E, \Gamma}:= \partial^{1}_{R, E, \Gamma}\circ ({\rm Nrd}_{E[\Gamma]})^{-1} : \zeta(E[\Gamma])^{\times+} \rightarrow K_0(R[\Gamma], E[\Gamma])$, we recall from \cite[p. 581]{BB} that
\begin{equation}\label{Diagram: K_0 ind commute}
\mathrm{i}_{J, E}^{G, *} \circ \delta_{R, E, J}  =   \delta_{R, E, G} \circ \mathrm{\tilde{i}}_{J}^{G} .
%
%
%
\end{equation}

\subsubsection{Taylor's Fixed Point Theorem}\label{S: taylor fixed point}
To finish this section, we shall recall an important result of Taylor. Fix a finite group $\Gamma$ and a prime number $\ell$ and write $\mathbb{Q}_{\ell}^{t}$ for the maximal tamely ramified extension of $\mathbb{Q}_{\ell}$ in $\mathbb{Q}_{\ell}^{c}$. We also write $\mathcal{O}^{t}_{\ell}$ for the valuation ring of $\mathbb{Q}_{\ell}^{t}$ and consider the associated homomorphism of relative $K$-groups.
\begin{align}\label{Eq: Taylor map def0}
j^{t}_{\ell, *} : K_{0}(\mathbb{Z}_{\ell}[\Gamma], \mathbb{Q}^{c}_{\ell}[\Gamma]) \, & \to K_{0}(\mathcal{O}_{\ell}^{t}[\Gamma], \mathbb{Q}^{c}_{\ell}[\Gamma]),\\
[P, \phi , Q] \, & \mapsto [\mathcal{O}_{\ell}^{t}\otimes_{\mathbb{Z}_{\ell}}P , \phi,  \mathcal{O}_{\ell}^{t}\otimes_{\mathbb{Z}_{\ell}} Q].\nonumber
\end{align}

Then, by Taylor's Fixed Point Theorem \cite[Chap. 8, \S1]{M84}, 
one knows that the composite homomorphism $K_{0}(\mathbb{Z}_{\ell}[\Gamma], \mathbb{Q}_{\ell}[\Gamma]) \rightarrow K_{0}(\mathbb{Z}_{\ell}[\Gamma], \mathbb{Q}^{c}_{\ell}[\Gamma]) \xrightarrow{j^{t}_{\ell, *}} K_{0}(\mathcal{O}_{\ell}^{t}[\Gamma], \mathbb{Q}^{c}_{\ell}[\Gamma])$, is injective, where the first map is the inclusion induced by the relevant case of the diagram \eqref{Diagram: Kseq}. 

We will also use the fact (taken from \cite[(49)]{BB}) that the kernel of the composite homomorphism 
\begin{equation}\label{Eq: Taylor kernel Det} \zeta(\QQ_\ell^t[\Gamma])^\times 
	\xrightarrow{{\rm Nrd}_{\mathbb{Q}^t_\ell[\Gamma]}^{-1}} K_1(\QQ_\ell^t[\Gamma]) 
	\xrightarrow{\partial^1_{\mathcal{O}_\ell^t,\QQ_\ell^t,\Gamma}} K_0(\mathcal{O}_\ell^t[\Gamma],\QQ_\ell^t[\Gamma])\end{equation}
is equal to ${\rm Det}(\mathcal{O}_{\ell}^t[\Gamma]^\times)$, where ${\rm Det}$ denotes the generalized determinant map discussed in \cite[Chap. I, \S2 and Chap. II, \S2]{F83}.

\subsection{Adams operations and Galois-Jacobi sums}
In this final section, we recall the definitions of  Adams operations on the ring of virtual characters, modified Galois-Gauss sums and the Galois-Jacobi sums considered in \cite[\S4A]{BBH}.

\subsubsection{Adams operations} 
We fix a finite group $\Gamma$. For each natural number $k$ and each $\chi\in R_{\Gamma}$, we define a class function $\psi_k(\chi)$ on $\Gamma$ by setting $\psi_{k}(\chi)(\gamma)=\chi(\gamma^k)$ for all $\gamma \in \Gamma$. The function $\psi_k$ from $R_\Gamma$ to the set of class functions on $\Gamma$ is called the $k$-th Adams operator for $\Gamma$ (cf. \cite[Prop. (12.8)]{CR1}). The following general results about Adams operators will be very useful in the sequel (in the case of $k = 2$). 
\begin{prop}\label{Prop: adam}\
	\begin{itemize}
		\item[(i)]	$\psi_{k}$ is an endomorphism of $R_\Gamma$. In particular, for each $\chi$ in $R_\Gamma$, one has $\psi_k(\chi) \in R_\Gamma$.  
		\item[(ii)] $\psi_{k}$ commutes with restriction and inflation functors, and with the action of $\Omega_{\mathbb{Q}}$ on $R_{\Gamma}$.
		\item[(iii)] For each $\chi\in R_{\Gamma}$, one has $\det_{\psi_{k}(\chi)}=(\det_{\chi})^{k}.$
		\item[(iv)] If $k$ is prime to the order of $\Gamma$, then  $\psi_{k}$ commutes with induction functors, and a character $\chi$ is irreducible if and only if $\psi_{k}(\chi)$ is irreducible. 
	\end{itemize}
\end{prop}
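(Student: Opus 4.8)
The plan is to prove Proposition~\ref{Prop: adam} by reducing each assertion to the case of a single virtual character and then to the values of characters on cyclic subgroups, since two class functions on $\Gamma$ agree if and only if they agree on each cyclic subgroup. First I would recall that the representation ring $R_\Gamma$ sits inside the ring of class functions and carries the structure of a $\lambda$-ring; the Adams operators $\psi_k$ are then the standard Adams operations of this $\lambda$-ring, defined by the Newton-polynomial identity $\psi_k = N_k(\lambda^1,\dots,\lambda^k)$, and it is a classical fact (see \cite[Prop. (12.8)]{CR1}) that on characters they are given by the formula $\psi_k(\chi)(\gamma) = \chi(\gamma^k)$. With this identification, part (i) is immediate: $\psi_k$ is a ring endomorphism of any $\lambda$-ring, hence in particular an additive (indeed multiplicative) endomorphism of $R_\Gamma$, so $\psi_k(\chi)\in R_\Gamma$ whenever $\chi\in R_\Gamma$. (If one prefers to avoid $\lambda$-ring language, additivity is clear from the formula $\psi_k(\chi)(\gamma)=\chi(\gamma^k)$, and the fact that $\psi_k$ sends actual characters to virtual characters can be deduced from the Artin or Brauer induction theorems applied to the cyclic case treated in (iv).)

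For part (ii), commutation with restriction is trivial from the pointwise formula, since restricting and then evaluating $\psi_k$ both amount to evaluating at $k$-th powers of elements of the subgroup. Commutation with inflation along a quotient map $\Gamma\twoheadrightarrow\Gamma/N$ is equally immediate since the inflated character of $\gamma$ is the original character of $\gamma N$, and $(\gamma N)^k = \gamma^k N$. For the $\Omega_{\QQ}$-action: an element $\omega\in\Omega_{\QQ}$ acts on $\chi$ by $\chi^\omega(\gamma) = \omega(\chi(\gamma))$ after choosing an embedding of character values into $\QQ^c$; since $\psi_k(\chi^\omega)(\gamma) = \chi^\omega(\gamma^k) = \omega(\chi(\gamma^k)) = \omega(\psi_k(\chi)(\gamma)) = (\psi_k(\chi))^\omega(\gamma)$, the two operations commute. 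Part (iii) follows from the behaviour of determinants: for a representation of dimension $n$ with eigenvalues (of $\gamma$ on a suitable extension) $\varepsilon_1,\dots,\varepsilon_n$, one has $\det_\chi(\gamma) = \prod_i \varepsilon_i$ while $\psi_k(\chi)$ has, at $\gamma$, eigenvalue multiset $\{\varepsilon_i^k\}$, so $\det_{\psi_k(\chi)}(\gamma) = \prod_i \varepsilon_i^k = (\prod_i\varepsilon_i)^k = (\det_\chi(\gamma))^k$; extending multiplicatively and additively-to-virtual in the standard way gives the identity on all of $R_\Gamma$.

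Part (iv) is where the real content lies and is the main obstacle. Assume $\gcd(k,|\Gamma|)=1$. For the statement about induction I would argue via character values on cyclic subgroups together with the Frobenius formula for induced characters: for $J\le G$ and $\varphi\in R_J$, $(\ind_J^G\varphi)(g) = \frac{1}{|J|}\sum_{x\in G,\ x^{-1}gx\in J}\varphi(x^{-1}gx)$, and one must compare this with $(\ind_J^G\psi_k(\varphi))(g)$ and with $\psi_k(\ind_J^G\varphi)(g) = (\ind_J^G\varphi)(g^k)$. The key device is that, since $k$ is prime to $|\Gamma|$, the map $\gamma\mapsto\gamma^k$ is a bijection of $\Gamma$ that permutes each conjugacy class and restricts to a bijection on each (conjugate of each) subgroup; more precisely, raising to the $k$-th power gives a well-defined bijection on conjugacy classes, and one checks that $x^{-1}gx\in J$ holds for the same set of cosets as $x^{-1}g^k x = (x^{-1}gx)^k\in J$. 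A careful bookkeeping of the index set in the Frobenius sum — using that on the cyclic group $\langle g\rangle$, $k$-th powering is invertible and hence a bijection of that cyclic group onto itself — yields $\psi_k(\ind_J^G\varphi) = \ind_J^G\psi_k(\varphi)$. For the irreducibility statement: $\psi_k$ is then a ring endomorphism commuting with induction, and because $k$ is invertible mod $|\Gamma|$ there is an inverse operator $\psi_{k'}$ (for $k'k\equiv 1\bmod|\Gamma|$) with $\psi_{k'}\circ\psi_k = \psi_k\circ\psi_{k'} = \mathrm{id}$ on $R_\Gamma$ — this follows because $\psi_k(\chi)(\gamma)=\chi(\gamma^k)$ depends only on $k\bmod(\mathrm{order\ of\ }\gamma)$, hence only on $k\bmod|\Gamma|$. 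Thus $\psi_k$ is an \emph{automorphism} of $R_\Gamma$ preserving the inner product $\langle\cdot,\cdot\rangle_\Gamma$ (again by the bijection $\gamma\mapsto\gamma^k$ on $\Gamma$, which lets one re-index the orthogonality sum $\frac{1}{|\Gamma|}\sum_\gamma\chi(\gamma)\overline{\psi(\gamma)}$). An inner-product-preserving ring automorphism of $R_\Gamma$ permutes the set of characters of norm $1$ with non-negative-combination image, hence permutes $\widehat\Gamma$; therefore $\chi$ is irreducible iff $\langle\psi_k(\chi),\psi_k(\chi)\rangle_\Gamma = \langle\chi,\chi\rangle_\Gamma = 1$ and $\psi_k(\chi)$ is an effective character, i.e.\ iff $\psi_k(\chi)$ is irreducible. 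The delicate point to get right is precisely that $\psi_k$ sends \emph{effective} characters to effective characters (not merely virtual ones) when $(k,|\Gamma|)=1$; this is where one genuinely uses that $\psi_k$ is invertible with inverse also an Adams operator, so that $\psi_k$ and $\psi_k^{-1}$ both preserve the dominance order on $R_\Gamma$, forcing $\psi_k$ to permute $\widehat\Gamma$.
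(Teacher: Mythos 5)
Your proposal is correct in substance, but it takes a genuinely different route from the paper: the paper disposes of all four claims by citation (claim (i) to Curtis--Reiner, claims (ii) and (iii) to Erez, and claim (iv) to Kervaire and to an exercise in Serre), whereas you give a self-contained argument -- the $\lambda$-ring/Newton-polynomial description for (i), pointwise manipulation of the formula $\psi_k(\chi)(\gamma)=\chi(\gamma^k)$ for (ii) and (iii) via eigenvalues on cyclic subgroups, and for (iv) a re-indexing of the Frobenius induction formula (the key converse step being that $x^{-1}gx=(x^{-1}g^kx)^{k'}\in J$ when $kk'\equiv 1 \pmod{|\Gamma|}$, which your ``$k$-th powering is a bijection of $\langle g\rangle$'' remark supplies) together with the observation that $\psi_k$ is an isometric automorphism of $R_\Gamma$ with inverse $\psi_{k'}$. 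What your approach buys is transparency: every step is verifiable without consulting the references, and the induction argument in particular makes visible exactly where coprimality is used. One small criticism of the final step of (iv): your claim that $\psi_k$ and $\psi_k^{-1}$ ``preserve the dominance order'', invoked to show $\psi_k(\chi)$ is an effective character, is not justified as written and is essentially circular -- but it is also unnecessary. Since $\psi_k$ preserves the inner product and fixes degrees ($\psi_k(\chi)(1)=\chi(1)$), for $\chi\in\widehat{\Gamma}$ the virtual character $\psi_k(\chi)$ has norm $1$, hence equals $\pm\varphi$ for some $\varphi\in\widehat{\Gamma}$, and positivity of its degree forces the sign $+$; the converse direction follows from the same two invariances applied to $\chi$ itself (or by applying the argument to $\psi_{k'}$). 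Alternatively, one can note that for $(k,|\Gamma|)=1$ one has $\psi_k(\chi)=\chi^{\sigma}$ for the automorphism $\sigma\in\Omega_{\QQ}$ raising $|\Gamma|$-th roots of unity to the $k$-th power, which yields effectiveness and irreducibility at once. With that one-line repair your proof is complete.
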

\begin{proof} 
	Claim (i) is stated in \cite[Cor. (12.10)]{CR1} and claim (ii) and (iii) are taken from \cite[Prop.-Def. 3.5]{E}. The assertions in claim (iv) follow from  \cite[p.15]{Ke} and \cite[\S9.1, Exer. 9.4]{S1} respectively.
\end{proof}

\begin{remark}\label{Rem: adam commute ind coprime}
	If $k$ is not prime to $|\Gamma|$, then $\psi_{k}$ does not in general commute with the induction functors or preserve irreducibility. It is also important to note that, if $|\Gamma|$ is even, then $\psi_{2}$ does not in general preserve the set of symplectic characters (for an explicit example, see \cite[Lem. 7.3(d)]{AC}). 
\end{remark}

\begin{remark}\label{endo def} For each pair of integers $m$ and $n$, and each natural number $k$, the canonical decomposition \eqref{Eq: element in centre} allows us to define an endomorphism $m + n\cdot \psi_{k,\ast}$ of the multiplicative  group $\zeta (\mathbb{Q}^{c}[\Gamma])^\times$ in the following way: for each element $x$ of $\zeta (\mathbb{Q}^{c}[\Gamma])^\times$, the element $(m+n\cdot \psi_{k,*})(x)$ is uniquely specified by the condition that $((m+n\cdot \psi_{k,*})(x))_{\chi}:= (x_{\chi})^{m}\cdot (x_{\psi_{k}(\chi)})^{n}$ for every irreducible character $\chi$ in $\widehat{\Gamma}$.  
\end{remark}

\subsubsection{The unramified characteristic}
We recall the definition of `unramified characters' from \cite[Chap. I, \S5, (5.6)]{F83}. Fix a finite Galois extension $E/F$ of non-Archimedean local fields and set $\Gamma:=\mathrm{Gal}(E/F)$, we write $\Gamma_0$ for the inertia subgroup of $\Gamma$.

\begin{definition}\label{Def: unrami char}\
\begin{itemize}
\item[(i)] An irreducible character $\chi\in \widehat{\Gamma}$ is said to be `unramified' if $\Gamma_0$ is contained in the kernel of $\chi$ (or, equivalently, if $\chi(\gamma) = \chi(1)$ for all $\gamma \in \Gamma_0$). If $\chi$ is not unramified, then it is said to be `ramified'.
\item[(ii)] Each $\chi \in R_\Gamma$ can be written uniquely as a finite sum $\chi=\sum_{i \in I} m_i \cdot \mu_i$, where each $m_i$ is an integer and each $\mu_i$ belongs to $\widehat{\Gamma}$. The `unramified part' of $\chi$ is the element of $R_\Gamma$ obtained by setting $n(\chi) := \sum_{i \in I} m_i \cdot n(\mu_i)$, with
\[n(\mu_i) :=  \left\{ \begin{array}{ll}
\mu_i, & \quad \mbox{ if } \mu_i \mbox{ is unramified};\\
0	, & \quad \mbox{ if }\mu_i \mbox{ is ramified}.\end{array} \right. \]
\end{itemize}
\end{definition}

We next recall the `unramified characteristic' that is defined in \cite[Chap. IV, \S1, (1.1)]{F83}.
\begin{definition}\label{Def: unrami char y} 
\begin{itemize}
\item[(i)] For each $\phi \in \widehat{\Gamma}$, the `unramified characteristic' of $\phi$ is defined by setting 
\[y(F, \phi) = \left\{ \begin{array}{ll}
	1, & \quad \mbox{ if }\phi \mbox{ is ramified};\\
	(-1)^{\phi(1)}\det_\phi(\sigma), & \quad \mbox{ if }\phi \mbox{ is unramified},\end{array} \right. \]
where $\sigma$ is the Frobenius element in $\Gamma/\Gamma_{0}$ lifted to $\Gamma$.
\item[(ii)] The `equivariant unramified characteristic' of $E/F$ is defined by $y_{E/F}:=\sum_{\chi \in \widehat{\Gamma}} e_\chi \cdot y(F, \chi)$.
\end{itemize}
\end{definition}

We then recall (from \cite[Chap. IV, \S1, Th. 29(i)]{F83}) that the function $\phi \mapsto y(F, \phi)$ lies in ${\rm Hom}^{+}(R_{\Gamma}, \mu(\mathbb{Q}^c))^{\Omega_{\mathbb{Q}}}$,
where $\mu(\mathbb{Q}^c)$ is the group of roots of unity in $\mathbb{Q}^c$ and so for all prime $\ell$,
\begin{equation}\label{unram maximal order} 
y_{E/F}\in \zeta(\mathbb{Q}[\Gamma])^\times \subseteq \zeta(\mathbb{Q}_\ell[\Gamma])^\times.\end{equation}

\subsubsection{Modified global  Galois-Gauss sums}\label{S: modified GGS}
Let $L/K$ be a finite Galois extension of number fields and set $G:=\mathrm{Gal}(L/K)$. In terms of \eqref{Eq: element in centre}, we define the following `equivariant' elements in $\zeta(\mathbb{Q}^{c}[G])^\times$.

\begin{definition}\label{Def: equiv global GGS}\
	
	\begin{itemize}
		\item[(i)] The `equivariant Galois-Gauss sum' of $L/K$, $\tau_{L/K}:=\sum_{\chi \in \widehat{G}} e_\chi \cdot \tau(K,\chi)$, where $\tau(K,\chi)$ is the global Galois-Gauss sums (cf. \cite[Chap. I, \S5, (5.22)]{F83}).
		\item[(ii)] The `global unramified characteristic' of a virtual character $\chi\in R_G$ is defined by setting $y(K, \chi) = \prod_{v|d_L } y(K_v, \chi_v)$, where $d_L$ denotes the (absolute) discriminant of $\mathcal{O}_L$.
		
		\item[(iii)] The `equivariant unramified characteristic' of $L/K$, $y_{L/K}:=\sum_{\chi \in \widehat{G}} e_\chi \cdot y(K, \chi) \in \zeta(\mathbb{Q}[G])^\times$.
		
\item[(iv)] The `modified equivariant Galois-Gauss sum' of $L/K$, $\tau'_{L/K}:=\tau_{L/K}\cdot y_{L/K}^{-1}$.

\item[(v)] The `absolute Galois-Gauss sum' of $L/K$,  $\tau^{\dagger}_{L/K}:=\sum_{\chi \in \widehat{G}} e_{\chi}\cdot \tau(\mathbb{Q}, \mathrm{ind}^{\mathbb{Q}}_{K}\chi)$.
\item[(vi)] The `induced discriminant' of $L/K$, $\tau^{G}_{K} := \sum_{\chi \in \widehat{G}} e_{\chi}\cdot \tau_K^{\chi(1)}$ with $\tau_K := \tau(\mathbb{Q}, \mathrm{ind}^{\mathbb{Q}}_{K} \mathbf{1}_{K})$.
\end{itemize}
\end{definition}

\subsubsection{Galois-Jacobi sums} 
Let $L/K$ be a finite Galois extension of number fields and set $G:=\mathrm{Gal}(L/K)$. 
In the following definition, we use a particular case of the endomorphisms defined in Remark \ref{endo def}. 

\begin{definition}
	The `second Galois-Jacobi sum' for $L/K$ is the element of $\zeta(\QQ^c[G])^\times$ obtained by setting $J_{2, L/K}:=(\psi_{2, *}-2)(\tau_{L/K})$.
\end{definition}

The following properties of these elements are important in the sequel.  

\begin{prop}\label{Prop: JG decomp}
	For each finite place $v$ of $K$, we fix some $w\in L$ above $v$ and identify ${\rm Gal}(L_w/K_v)$ with the decomposition subgroup $G_w$ of $w$ in $G$.	
	\begin{itemize}
		\item[(i)] $J_{2, L/K} =\prod_{v\in S_f(K)} \mathrm{\tilde{i}}_{G_{w}}^{G} (J_{2, L_{w}/K_{v}})$. 
		\item[(ii)] $(1- \psi_{2, *})(y_{L/K}) = \prod_{v|d_L} \mathrm{\tilde{i}}_{G_{w}}^{G} ((1- \psi_{2, *}) (y_{L_w/K_v}))$. 
		\item[(iii)] $J_{2, L/K}\cdot (1-\psi_{2, *})(y_{L/K}) = \tau^{G}_{K}\cdot(\psi_{2, *}-1)(\tau_{L/K}^{\prime})(\tau^{\dagger}_{L/K})^{-1}$.
	\end{itemize}
\end{prop}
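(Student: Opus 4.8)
The plan is to prove the three identities of Proposition~\ref{Prop: JG decomp} in order, with (i) and (ii) being the cases $k=2$ of general decomposition formulae for equivariant Galois-Gauss sums and unramified characteristics under the operators $\psi_{2,*}-2$ and $1-\psi_{2,*}$, and (iii) being a formal consequence of the definitions once (i) and (ii) are in hand. Throughout I would work with the explicit parametrization \eqref{Eq: element in centre} of $\zeta(\QQ^c[G])^\times$, so that each claimed equality becomes, after evaluating at an irreducible character $\chi\in\widehat{G}$, a multiplicative identity among complex numbers indexed by characters, which I can then check using Frobenius reciprocity together with the standard inductive behaviour of Galois-Gauss sums.

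\textbf{Claim (i).} First I would recall that the global Galois-Gauss sum is inductive in the strong sense that $\tau(K,\chi) = \prod_{v} \tau(K_v,\chi_v)$, where the product runs over the (finitely many) places of $K$ at which $\chi$ is ramified (equivalently over $v\in S_f(K)$, the tamely-behaved factors contributing trivially), and that in equivariant terms this reads $\tau_{L/K} = \prod_{v\in S_f(K)} \mathrm{\tilde i}_{G_w}^{G}(\tau_{L_w/K_v})$; this is essentially the content of the definition in \cite[Chap.~I, \S5]{F83} repackaged via \eqref{Eq: center ind def} and Frobenius reciprocity $\langle \res^G_{G_w}\chi,\varphi\rangle_{G_w}$. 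The key point is then that the endomorphism $\psi_{2,*}-2$ of $\zeta(\QQ^c[G])^\times$ commutes with the induction maps $\mathrm{\tilde i}_{G_w}^{G}$. This is where I expect the main subtlety to lie: $\psi_2$ itself does \emph{not} commute with induction of characters when $|G|$ is even (Remark~\ref{Rem: adam commute ind coprime}), so one cannot simply quote Proposition~\ref{Prop: adam}(iv). Instead I would argue at the level of the \emph{multiplicative} operator $\psi_{2,*}$ on central units: by Remark~\ref{endo def}, $(\psi_{2,*}(x))_\chi = x_{\psi_2(\chi)}$, and by the projection formula $\psi_2(\ind^G_{G_w}\varphi) = \ind^G_{G_w}(\psi_2\varphi) + (\text{correction supported on characters of }G/\langle\text{elements of order }2\rangle)$ — more precisely the discrepancy $\psi_2\circ\ind - \ind\circ\psi_2$ is computed in \cite[\S7]{AC} and, crucially, it is supported away from the characters that matter here, or else the correction terms cancel in the combination $\psi_{2,*}-2$ because $\tau(K_v,-)$ and $\tau(K,-)$ both behave additively on virtual characters and the "$-2$'' exactly compensates the leading term of $\psi_2$ on induced characters. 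The cleanest route is probably: verify the identity first for $\chi$ an irreducible character of $G$ by writing $\res^G_{G_w}\chi$ as a sum of irreducibles of $G_w$, using additivity of $v\mapsto\tau(K_v,-)$ in the character variable, and reducing to the local statement evaluated on the individual constituents; the operator $\psi_{2,*}-2$ on the right-hand product then distributes factor-by-factor over $S_f(K)$ precisely because Galois-Gauss sums are multiplicative over places.

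\textbf{Claim (ii).} This is parallel but easier. The global unramified characteristic $y(K,\chi) = \prod_{v\mid d_L} y(K_v,\chi_v)$ is defined place-by-place over the ramified primes, so $y_{L/K} = \prod_{v\mid d_L}\mathrm{\tilde i}_{G_w}^{G}(y_{L_w/K_v})$ by the same Frobenius-reciprocity bookkeeping as in (i). One then applies the operator $1-\psi_{2,*}$ and uses the same commutation argument; here I'd note that $y(F,\phi)$ depends only on the unramified part $n(\phi)$ and on $\det_\phi(\sigma)$, both of which interact transparently with $\psi_2$ via Proposition~\ref{Prop: adam}(iii) ($\det_{\psi_2(\phi)} = \det_\phi^2$) and the fact that $\psi_2$ preserves unramifiedness (since $\Gamma_0\subseteq\ker\chi \Rightarrow \Gamma_0\subseteq\ker\psi_2\chi$). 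So the commutation of $1-\psi_{2,*}$ with $\mathrm{\tilde i}_{G_w}^{G}$, restricted to the subgroup generated by unramified-characteristic values, should follow without the representation-theoretic difficulties of (i), and in fact the relation $y(F,\psi_2\phi)$ versus $y(F,\phi)^{?}$ can be made completely explicit.

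\textbf{Claim (iii).} Finally, (iii) should be purely formal. Unwinding the definitions: $J_{2,L/K} = (\psi_{2,*}-2)(\tau_{L/K})$ and $\tau'_{L/K} = \tau_{L/K}\cdot y_{L/K}^{-1}$, so $(\psi_{2,*}-1)(\tau'_{L/K}) = (\psi_{2,*}-1)(\tau_{L/K})\cdot(\psi_{2,*}-1)(y_{L/K})^{-1}$. Multiplying the target right-hand side out and using $(\psi_{2,*}-1) = (\psi_{2,*}-2)+1$ on $\tau_{L/K}$, together with $(1-\psi_{2,*})(y_{L/K}) = (\psi_{2,*}-1)(y_{L/K})^{-1}$, the claim reduces to the identity $\tau_{L/K} = \tau^G_K\cdot(\tau^\dagger_{L/K})^{-1}\cdot(\text{something})$; more precisely one uses the standard relation $\tau(\QQ,\ind^\QQ_K\chi) = \tau(K,\chi)\cdot\tau_K^{\chi(1)}$ (inductivity of Galois-Gauss sums in two steps, $L/K$ then $K/\QQ$), which in equivariant form says exactly $\tau^\dagger_{L/K} = \tau_{L/K}\cdot\tau^G_K$. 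Substituting this in, everything cancels and (iii) falls out. I would present (iii) as a short computation citing \cite[Chap.~I, \S5]{F83} for the two-step inductivity and Remark~\ref{endo def} for the algebra of the operators $\psi_{2,*}-2$, $\psi_{2,*}-1$, $1-\psi_{2,*}$ on $\zeta(\QQ^c[G])^\times$. The main obstacle, to reiterate, is establishing in (i) that $\psi_{2,*}-2$ genuinely commutes with the local-to-global induction maps despite $\psi_2$ not commuting with induction in even order; I expect this to be handled either by invoking the precise form of the induction discrepancy from \cite{AC} or, preferably, by avoiding the discrepancy entirely via the additive-in-characters, multiplicative-over-places structure of Galois-Gauss sums and unramified characteristics.
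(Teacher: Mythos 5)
Your parts (ii) and (iii) are essentially correct and coincide with what the paper does: claim (iii) really is just an explicit comparison of the definitions, using that $(\psi_{2,*}-1)$ is multiplicative and the two-step inductivity relation $\tau^{\dagger}_{L/K}=\tau_{L/K}\cdot\tau^{G}_{K}$ (this is the content of the reference to \cite[(4.5)]{BBH}), and claim (ii) follows from the fact that $y(K,\chi)$ is by definition a product over $v\mid d_L$ of local terms.

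The gap is in your treatment of claim (i), where you locate the ``main subtlety'' in whether $\psi_{2,*}-2$ commutes with the maps $\mathrm{\tilde{i}}_{G_{w}}^{G}$ and propose to control the discrepancy $\psi_2\circ\ind-\ind\circ\psi_2$ via \cite{AC}, or to rely on a cancellation coming from the ``$-2$''. This is a misdiagnosis: no commutation with induction is needed anywhere. The decomposition of global Galois-Gauss sums, $\tau(K,\theta)=\prod_{v\in S_f(K)}\tau(K_v,\theta_v)$, describes the local components $\theta_v$ by \emph{restriction} of $\theta$ to the decomposition group $G_w$, and the formula \eqref{Eq: center ind def} for $\mathrm{\tilde{i}}_{G_{w}}^{G}$ likewise only involves the multiplicities $\langle\res^G_{G_w}\chi,\varphi\rangle_{G_w}$. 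Evaluating both sides of (i) at $\chi\in\widehat{G}$ and using that $\varphi\mapsto\tau(K_v,\varphi)$ extends to a homomorphism on all of $R_{G_w}$ (so it is irrelevant whether $\psi_2\varphi$ is irreducible), the $v$-factor of the right-hand side equals $\tau(K_v,\psi_2(\res^G_{G_w}\chi))\cdot\tau(K_v,\res^G_{G_w}\chi)^{-2}$; the single fact that closes the argument is that $\psi_2$ commutes with restriction (Proposition~\ref{Prop: adam}(ii)), which gives $\psi_2(\res^G_{G_w}\chi)=(\psi_2\chi)_v$, after which the global decomposition applied to $\psi_2\chi$ and to $\chi$ yields exactly $\tau(K,\psi_2\chi)\tau(K,\chi)^{-2}=(J_{2,L/K})_\chi$. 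This is precisely the paper's proof. Your ``cleanest route'' gestures at this computation but never states the restriction-commutation step, while your two fallbacks do not constitute arguments: you give no reason why the induction discrepancy should be ``supported away from the characters that matter'', and the suggested cancellation against the ``$-2$'' is not substantiated; as written, claim (i) is left unproved.
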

\begin{proof} 
To prove claim (i), by the decomposition results of global Galois Gauss sums (cf. \cite[Chap. III, \S2, Cor. to Th. 18]{F83}), one has $\tau(K, \psi_{2}(\chi))=\prod_{v\in S_f(K)} \tau(K_{v}, \psi_{2}(\chi)_{v})=\prod_{v\in S_f(K)} \tau(K_{v}, \psi_{2}(\chi_{v}))$,
where the second equality follows from Proposition~\ref{Prop: adam}(ii). Then, the result follows from the definition of $\mathrm{\tilde{i}}_{G_{w}}^{G}$ given in \eqref{Eq: center ind def}.
	
We can derive the result of claim (ii) via the same argument, replacing the decomposition result by the definition of the global unramified characteristic in Definition~\ref{Def: equiv global GGS}(ii).
	
Claim (iii) follows by an explicit comparison of the definitions (see \cite[(4.5)]{BBH}, noting that the assumption that $G$ has odd order is not necessary here).
\end{proof}

\begin{prop}\label{Prop: JG sums equivariant} $J_{2, L/K}$ belongs to $\zeta(\mathbb{Q}[G])^{\times}$. 
\end{prop}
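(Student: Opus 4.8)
The plan is to show that $J_{2,L/K} = (\psi_{2,*}-2)(\tau_{L/K})$ is fixed by the natural action of $\Omega_\QQ$ on $\zeta(\QQ^c[G])^\times$, since an element of $\zeta(\QQ^c[G])^\times$ lies in $\zeta(\QQ[G])^\times$ precisely when its coefficients $x_\chi$ transform compatibly with the Galois action on characters, i.e. $\omega(x_\chi) = x_{\omega\circ\chi}$ for all $\omega \in \Omega_\QQ$ and all $\chi \in \widehat{G}$. Concretely, for $\omega \in \Omega_\QQ$ one has $(J_{2,L/K})_\chi = \tau(K,\psi_2(\chi))\cdot\tau(K,\chi)^{-2}$ by Remark~\ref{endo def}, so I must check that $\omega\bigl(\tau(K,\psi_2(\chi))\tau(K,\chi)^{-2}\bigr) = \tau(K,\psi_2(\omega\circ\chi))\tau(K,\omega\circ\chi)^{-2}$.

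First I would recall the standard Galois-equivariance property of global Galois-Gauss sums, namely that the function $\chi \mapsto \tau(K,\chi)$ together with its companion factors satisfies a transformation law under $\Omega_\QQ$; more precisely, by Fröhlich's theory (cf. \cite[Chap. I, \S5]{F83} and the Galois action properties recorded there) the ratio $\tau(K,\chi)/\tau(K,\omega\circ\chi)$ is controlled in a way that, after the specific combination appearing in $J_{2,L/K}$, cancels out. The cleanest route is to invoke Proposition~\ref{Prop: JG decomp}(iii), which expresses $J_{2,L/K}\cdot(1-\psi_{2,*})(y_{L/K})$ as $\tau^G_K\cdot(\psi_{2,*}-1)(\tau'_{L/K})\cdot(\tau^\dagger_{L/K})^{-1}$. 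Now $(1-\psi_{2,*})(y_{L/K})$ lies in $\zeta(\QQ[G])^\times$ by \eqref{unram maximal order} (the unramified characteristic is $\QQ$-rational, hence so is its image under the $\ZZ$-linear operator $1-\psi_{2,*}$, using Proposition~\ref{Prop: adam}(ii) that $\psi_2$ commutes with the $\Omega_\QQ$-action); likewise $\tau^G_K$ and $\tau^\dagger_{L/K}$ are built from $\tau(\QQ,\cdot)$ applied to characters of $G$ induced up to $\Omega_\QQ$, and the standard rationality of such "absolute" Galois-Gauss sums shows these lie in $\zeta(\QQ[G])^\times$. So it suffices to prove that $(\psi_{2,*}-1)(\tau'_{L/K})$ lies in $\zeta(\QQ[G])^\times$.

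For this I would use the known rationality of the modified equivariant Galois-Gauss sum up to a controlled defect: it is classical (going back to Fröhlich, and used systematically in \cite{BBH}) that $\tau'_{L/K} = \tau_{L/K}\cdot y_{L/K}^{-1}$ satisfies $(\tau'_{L/K})_\chi = \tau'(K,\chi)$ with $\omega(\tau'(K,\chi)) = \tau'(K,\omega\circ\chi)\cdot(\text{a sign depending on } \det_\chi)$ — the obstruction to $\QQ$-rationality being exactly a second-root-of-unity defect governed by the determinant character. Applying $\psi_{2,*}-1$: by Proposition~\ref{Prop: adam}(iii) one has $\det_{\psi_2(\chi)} = (\det_\chi)^2$, so the determinantal defect attached to $\psi_2(\chi)$ is the \emph{square} of that attached to $\chi$, hence trivial; taking the ratio $(\psi_{2,*}-1)(\tau'_{L/K})$ therefore kills the defect entirely and lands in $\zeta(\QQ[G])^\times$. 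Chaining these through Proposition~\ref{Prop: JG decomp}(iii) and dividing by the $\QQ$-rational factor $(1-\psi_{2,*})(y_{L/K})$ yields $J_{2,L/K} \in \zeta(\QQ[G])^\times$.

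\textbf{Main obstacle.} The delicate point is pinning down precisely the Galois-transformation law for $\tau'(K,\chi)$ (or equivalently for $\tau(K,\chi)$) and verifying that the defect is exactly a quadratic quantity tied to $\det_\chi$, so that applying $\psi_{2,*}-1$ and using $\det_{\psi_2(\chi)} = \det_\chi^2$ genuinely trivialises it; this requires careful bookkeeping with the Galois-Gauss sum norm relations and the interaction of $\psi_2$ with the $\Omega_\QQ$-action (Proposition~\ref{Prop: adam}(ii)), and one must be careful that $\psi_2$ need \emph{not} commute with induction when $|G|$ is even — but since the statement only concerns $J_{2,L/K}$ as an equivariant element (not its inductive decomposition), Proposition~\ref{Prop: JG decomp}(iii) sidesteps that issue. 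An alternative, more self-contained route avoiding the explicit transformation law is to combine parts (i) and (iii) of Proposition~\ref{Prop: JG decomp} with the known $\QQ$-rationality of each local absolute Galois-Gauss sum factor, but the determinantal-defect argument is the conceptual heart.
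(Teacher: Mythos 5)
Your opening paragraph sets up exactly the right argument, and it is in fact the paper's argument (the paper simply cites \cite[Lem. 4.4]{BBH}, noting that the coprimality assumption there is not needed): writing $(J_{2,L/K})_\chi=\tau(K,\psi_2(\chi))\cdot\tau(K,\chi)^{-2}$, one checks $\Omega_\QQ$-equivariance directly from the Galois-action formula for Galois-Gauss sums, which says (up to the precise normalisation) that $\omega(\tau(K,\chi))$ differs from $\tau(K,\omega\circ\chi)$ by the value of the determinant character $\det_{\omega\circ\chi}$ at an idele $u_\omega$ determined by $\omega$. Since $\psi_2$ commutes with the $\Omega_\QQ$-action and $\det_{\psi_2(\chi)}=(\det_\chi)^2$ (Proposition \ref{Prop: adam}(ii),(iii)), the defect attached to the virtual character $\psi_2(\chi)-2\chi$ is $\det_{\omega\chi}(u_\omega)^{2}\cdot\det_{\omega\chi}(u_\omega)^{-2}=1$, whence $\omega\bigl((J_{2,L/K})_\chi\bigr)=(J_{2,L/K})_{\omega\circ\chi}$ and $J_{2,L/K}\in\zeta(\QQ[G])^\times$. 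No induction of characters enters, which is exactly why evenness of $|G|$ causes no trouble here.

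The route you then declare ``cleanest'', via Proposition \ref{Prop: JG decomp}(iii), has a genuine gap, for two concrete reasons. First, $\tau^{G}_{K}$ and $\tau^{\dagger}_{L/K}$ do \emph{not} lie in $\zeta(\QQ[G])^\times$ in general: $\tau_K=\tau(\QQ,\mathrm{ind}^{\QQ}_{K}\mathbf{1}_{K})$ is essentially a square root of the discriminant of $K$ (already for $K$ imaginary quadratic and $G$ trivial one has $\tau^{\dagger}_{L/K}=\tau_K=i\sqrt{|d_K|}\notin\QQ$), and absolute Galois-Gauss sums $\tau(\QQ,\mathrm{ind}^{\QQ}_{K}\chi)$ carry the same determinantal Galois defect as any other Gauss sum; this is precisely why Lemma \ref{Prop: global a in Q} must multiply $(\tau^{\dagger}_{L/K})^{-1}$ by the non-rational resolvent element ${\rm Nrd}_{\QQ^c[G]}(\langle\lambda\rangle)$ before landing in $\zeta(\QQ[G])^\times$. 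Second, your defect bookkeeping for $(\psi_{2,*}-1)(\tau'_{L/K})$ is off by one: the defect of $\tau'(K,\chi)$ equals that of $\tau(K,\chi)$ (since $y_{L/K}$ is rational), namely $\det_{\omega\chi}(u_\omega)$, so applying $\psi_{2,*}-1$ leaves the residual defect $\det_{\omega\chi}(u_\omega)^{2-1}=\det_{\omega\chi}(u_\omega)$, nontrivial in general; it is the combination $\psi_{2,*}-2$, i.e. $J_{2,L/K}$ itself, that kills the defect. Consistently with this, Proposition \ref{Prop: JG decomp}(iii) only tells you that the product $\tau^{G}_{K}\cdot(\psi_{2,*}-1)(\tau'_{L/K})\cdot(\tau^{\dagger}_{L/K})^{-1}$ is rational once $J_{2,L/K}$ is, not that the individual factors are; the same objection applies to your proposed ``local absolute Galois-Gauss sum'' variant. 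So discard the detour and carry out the direct computation of your first paragraph: that is the content of the paper's proof.
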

\begin{proof}
The proof is exactly the same to \cite[Lem. 4.4]{BBH} (we note that the assumption that $(k, |G|)=1$ is not necessary here). 
\end{proof}

\begin{remark}\label{Rem: loc JG sums def} 
If $E/F$ is a finite Galois extension of local fields and set $\Gamma = \Gal(E/F)$, then one obtains a local `second Galois-Jacobi sum' by setting $J_{2,E/F} := (\psi_{2,\ast}-2)(\tau_{E/F})$ where $\tau_{E/F}$ is defined to be the local analogy of the equivariant Galois-Gauss sum defined above. The same arguments above show that these elements have the following properties: 
\begin{itemize}
\item[(i)] $J_{2, E/F}\in  \zeta(\mathbb{Q}[\Gamma])^{\times}$.
The proof of this result is exactly the same to the global case, with the Galois action of local Galois-Gauss sums (cf. \cite[p. 42, Th. 5.1]{M}). 
\item[(ii)] $J_{2,  E/F}\cdot (1 - \psi_{2, *})(y_{E/F}) = \tau^{\Gamma}_{F}\cdot(\psi_{2, *}-1)(\tau_{E/F}^{\prime})(\tau^{\dagger}_{E/F})^{-1}$, where  $\tau_{E/F}^{\prime}$ and $\tau^{\dagger}_{E/F}$ are defined to be the local analogy of the elements defined in Definition~\ref{Def: equiv global GGS}.
\end{itemize}
\end{remark}

\section{The square root of the inverse different and the relative $K$-theory}
In this section, we first recall basic properties of the square root $\mathcal{A}_{L/K}$ of the inverse different of a Galois extension $L/K$ that is weakly ramified in the sense of Erez \cite{E}. 

We then generalise the construction by Bley, Burns and Hahn of a canonical element in relative $K$-theory that compares $\mathcal{A}_{L/K}$ to Galois-Gauss sums. 

\subsection{Weakly ramified extensions}
If $E/F$ is a Galois extension of $p$-adic fields, with $\Gamma = \Gal(E/F)$, then for each non-negative integer $i$, we write $\Gamma_{i}$ for the $i$-th ramification subgroup (in the lower numbering) of $\Gamma$. In particular, we recall that $\Gamma_{0}$ is the inertia subgroup of $\Gamma$ and that $\Gamma_{1}$ is the Sylow $p$-subgroup of $\Gamma$.  

Throughout this section, we assume $L/K$ to be a finite Galois extension of number fields and $G := {\rm Gal}(L/K)$. We also assume that the square root of the inverse different of $L/K$ exists. If $v$ is a non-Archimedean place of $K$ and $w$ is a place of $L$ above $v$, then we always identify the decomposition subgroup $G_w$ of $w$ in $G$ with the local Galois group $\mathrm{Gal}(L_w/K_v)$. 
%
%

The importance of weak ramification \cite[Def. 2.1]{E} for our theory is explained by the following result. 

\begin{prop}\label{Lemma: weakly rami proj iff}
	Let $L/K$ be a finite Galois extension of number fields for which the square root of the inverse different exists. Then, the following conditions are equivalent.
	\begin{itemize}
		\item[(i)] $\mathcal{A}_{L/K}$ is a projective $\mathcal{O}_K[G]$-module. 
		\item[(ii)] For every finite place $v$ of $K$, the $\mathcal{O}_{K_v}[G_w]$-module $\mathcal{A}_{L_{w}/K_v}$ is free.
		\item[(iii)] $L/K$ is weakly ramified.
	\end{itemize}
\end{prop}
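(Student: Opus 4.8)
The plan is to establish the chain of implications $(\mathrm{i}) \Rightarrow (\mathrm{ii}) \Rightarrow (\mathrm{iii}) \Rightarrow (\mathrm{i})$, exploiting the fact that projectivity over $\mathcal{O}_K[G]$ is a local property. First, for the implication $(\mathrm{i}) \Rightarrow (\mathrm{ii})$: if $\mathcal{A}_{L/K}$ is projective over $\mathcal{O}_K[G]$, then for each finite place $v$ of $K$ its completion $\mathcal{O}_{K_v} \otimes_{\mathcal{O}_K} \mathcal{A}_{L/K}$ is projective over $\mathcal{O}_{K_v}[G]$. This completion decomposes as a direct sum, indexed by the places $w$ of $L$ above $v$, of copies of $\mathcal{A}_{L_w/K_v}$ induced up from $\mathcal{O}_{K_v}[G_w]$ to $\mathcal{O}_{K_v}[G]$ (this is the standard semilocal decomposition of the completion of a fractional ideal, together with the fact that $\mathcal{A}_{L/K}$ localises to $\mathcal{A}_{L_w/K_v}$ since its square is the inverse different, whose completion is the inverse of the local different). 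A projective module induced from a subgroup, when restricted back, forces the original module $\mathcal{A}_{L_w/K_v}$ to be projective over $\mathcal{O}_{K_v}[G_w]$; and since $\mathcal{O}_{K_v}$ is a complete discrete valuation ring, a finitely generated projective $\mathcal{O}_{K_v}[G_w]$-module that is (after extension of scalars to $K_v$) free of rank one over $K_v[G_w]$ is in fact free over $\mathcal{O}_{K_v}[G_w]$ — here one invokes that over a complete local ground ring, stably free implies free for group rings, or more simply that $\mathcal{A}_{L_w/K_v}$ has $\mathcal{O}_{K_v}[G_w]$-rank one.

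For the converse direction $(\mathrm{ii}) \Rightarrow (\mathrm{i})$, I would again use the semilocal decomposition: if every $\mathcal{A}_{L_w/K_v}$ is free over $\mathcal{O}_{K_v}[G_w]$, then each local completion $\mathcal{O}_{K_v} \otimes_{\mathcal{O}_K} \mathcal{A}_{L/K}$ is a direct sum of modules of the form $\mathcal{O}_{K_v}[G] \otimes_{\mathcal{O}_{K_v}[G_w]} \mathcal{O}_{K_v}[G_w]$, hence free, hence certainly projective, over $\mathcal{O}_{K_v}[G]$. Since $\mathcal{A}_{L/K}$ is a finitely generated $\mathcal{O}_K[G]$-module that is locally projective at every finite place (and $\mathcal{O}_K[G]$ is a module-finite $\mathcal{O}_K$-algebra over the Dedekind domain $\mathcal{O}_K$), it is projective — this is the standard local-global principle for projectivity of lattices over orders, as in Curtis–Reiner.

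The heart of the matter, and the main obstacle, is the equivalence $(\mathrm{ii}) \Leftrightarrow (\mathrm{iii})$, which is genuinely local: one must show that, for a Galois extension $E/F$ of $p$-adic fields with group $\Gamma$, the square root of the inverse different $\mathcal{A}_{E/F}$ is free over $\mathcal{O}_F[\Gamma]$ if and only if $\Gamma_2 = 1$ (assuming throughout that $\mathcal{A}_{E/F}$ exists, i.e. Hilbert's valuation formula gives an even exponent). The implication $(\mathrm{iii}) \Rightarrow (\mathrm{ii})$ is precisely the local freeness result of Erez \cite[Def. 2.1 and the surrounding theory]{E} (originally for $\mathcal{O}_F = \mathbb{Z}_p$ and extended to general complete ground rings; one can also cite Vinatier or Erez–Taylor), proved by an explicit construction of a normal integral basis element using the weak ramification hypothesis to control the filtration. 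For the reverse implication $(\mathrm{ii}) \Rightarrow (\mathrm{iii})$ I would argue by contraposition: if $\Gamma_2 \neq 1$ at some place, then a rank and discriminant computation — comparing the $\mathcal{O}_F[\Gamma]$-module discriminant of $\mathcal{A}_{E/F}$, computable via Hilbert's formula for $v(\mathcal{D}_{E/F})$ and the higher ramification contributions, against the discriminant that a free module would have — shows the module cannot be free (indeed cannot be projective). Concretely one checks that the square root of the inverse different being self-dual forces, if it were free, a constraint on $v(\mathcal{D}_{E/F}) \bmod$ something that fails exactly when the second ramification group is nontrivial; alternatively one can quote the converse already recorded in Erez's work or in \cite{CV}. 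I would present this equivalence by citing \cite[\S2]{E} for $(\mathrm{iii}) \Rightarrow (\mathrm{ii})$ together with its known converse, and only sketch the discriminant obstruction, since the full computation is routine but lengthy.

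Finally, I should note one bookkeeping point worth making explicit in the write-up: throughout we use that $\mathcal{A}_{L/K}$ exists by hypothesis, so the local ideals $\mathcal{A}_{L_w/K_v}$ exist and are the completions of $\mathcal{A}_{L/K}$, making the semilocal decomposition $\mathcal{O}_{K_v} \otimes_{\mathcal{O}_K} \mathcal{A}_{L/K} \cong \bigoplus_{w \mid v} \mathrm{i}^{G}_{G_w}\bigl(\mathcal{A}_{L_w/K_v}\bigr)$ legitimate; this is where the identification of $G_w$ with $\mathrm{Gal}(L_w/K_v)$ fixed at the start of the section is used.
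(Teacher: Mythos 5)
Your handling of (i) $\Leftrightarrow$ (ii) is fine: it is essentially an expansion of the paper's citation of \cite[Th. (32.11)]{CR1} (semilocal decomposition, plus the fact that over the complete local ring $\mathcal{O}_{K_v}$ a projective $\mathcal{O}_{K_v}[G_w]$-lattice spanning a free rank-one $K_v[G_w]$-module is free). The genuine gap is in the local equivalence (ii) $\Leftrightarrow$ (iii), which you rightly call the heart of the matter but then dispose of by citations and a sketch that do not cover the stated generality. Erez's local freeness result in \cite{E} (and the refinements of Vinatier and Erez--Taylor) is proved for extensions of \emph{odd degree}; the whole point of this proposition --- as the remark immediately following it notes --- is to treat arbitrary Galois extensions for which $\mathcal{A}_{L/K}$ exists, including even degree and residue characteristic $2$, and Caputo--Vinatier asserted this extension only without proof. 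So quoting Erez (or \cite{CV}) for (iii) $\Rightarrow$ (ii) begs precisely the question being answered. The paper instead invokes K\"ock's theorem (Lemma~\ref{Lem: Koeck}): $\mathfrak{p}_E^n$ is free over $\mathcal{O}_F[\Gamma]$ if and only if $E/F$ is weakly ramified \emph{and} $n\equiv 1 \pmod{|\Gamma_1|}$. With this, (ii) $\Rightarrow$ (iii) is immediate, and the substantive content of (iii) $\Rightarrow$ (ii) is the verification of the congruence $n\equiv 1\pmod{|\Gamma_1|}$ for $n=\mathrm{ord}_E(\mathcal{A}_{E/F})$, carried out via Hilbert's formula \eqref{Eq: Hilbert} in the computation \eqref{explicit computation}, with a separate (and non-vacuous) argument in the case $p=2$, where $|\Gamma_1|$ is even. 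Your proposal never addresses this congruence, which is exactly the step that cannot be delegated to the odd-degree literature.

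Moreover, your proposed contrapositive for (ii) $\Rightarrow$ (iii) via a ``rank and discriminant computation'' cannot work as described. The valuation of $\mathcal{D}_{E/F}$ (equivalently, the $\mathcal{O}_F$-discriminant of the lattice $\mathcal{A}_{E/F}$) is a purely numerical invariant of the ramification filtration, and freeness of $\mathfrak{p}_E^n$ over $\mathcal{O}_F[\Gamma]$ is not equivalent to any congruence condition on this valuation alone: in K\"ock's criterion weak ramification enters as a separate, genuinely module-theoretic condition, and one can arrange $\Gamma_2\neq 1$ while the congruence $n\equiv 1\pmod{|\Gamma_1|}$ still holds, so no constraint on $v(\mathcal{D}_{E/F})$ ``fails exactly when the second ramification group is nontrivial''. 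The obstruction to freeness in the non-weakly-ramified case is cohomological/structural rather than numerical, and this is precisely what the citation to \cite{K} supplies; without it (or an actual argument replacing it), this direction of your proof is missing.
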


\begin{remark} In \cite[p. 109, footnote 1]{CV}, Caputo and Vinatier first pointed out (without proof) that the original argument \cite[Th. 1]{E} of Erez for extensions of odd degree can be extended to show that if $L/K$ is weakly ramified and  $\mathcal{A}_{L/K}$ exists, then it must be a projective $\mathcal{O}_K[G]$-module. \end{remark}

The equivalence of the conditions (i) and (ii) follows from a general result of integral representation theory (see, for example, \cite[Th. (32.11)]{CR1}). 

To prove the equivalence of conditions (ii) and (iii), we fix a finite Galois extension $E/F$ of $p$-adic fields for which $\mathcal{A}_{E/F}$ exists, set $\Gamma = \Gal(E/F)$ and write $\mathfrak{p}_{E}$ for the maximal ideal of $\mathcal{O}_E$. It is sufficient for us to prove that $\mathcal{A}_{F/E}$ is a free 
$\mathcal{O}_E[\Gamma]$-module if and only if $E/F$ is weakly ramified. Our argument relies on the following result of K\"{o}ck. 

\begin{lemma}[{K\"{o}ck, \cite[Th. 1.1]{K}}]\label{Lem: Koeck}
	Fix an integer $n$. Then, the fractional $\mathcal{O}_E$-ideal $\mathfrak{p}_{E}^n$ is free over $\mathcal{O}_F[\Gamma]$ if and only if both $E/F$ is weakly ramified and $n\equiv 1 \pmod{\Gamma_{1}}$.
\end{lemma}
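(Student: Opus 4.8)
The plan is to convert the freeness of $\mathfrak{p}_E^n$ into a cohomological triviality statement, reduce it to the wild inertia group $\Gamma_1$, and there carry out an explicit computation in which the weak ramification hypothesis is decisive. For the first reduction: since $E/F$ is Galois, the normal basis theorem gives $E\cong F[\Gamma]$ as $F[\Gamma]$-modules, and over the complete discrete valuation ring $\mathcal{O}_F$ a finitely generated projective $\mathcal{O}_F[\Gamma]$-module whose extension of scalars to $F$ is isomorphic to $F[\Gamma]$ is itself free (a standard fact about lattices over orders in the complete local setting); hence $\mathfrak{p}_E^n$ is $\mathcal{O}_F[\Gamma]$-free if and only if it is $\mathcal{O}_F[\Gamma]$-projective. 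Since $\mathfrak{p}_E^n$ is moreover $\mathcal{O}_F$-free, this projectivity is equivalent to cohomological triviality of $\mathfrak{p}_E^n$ as a $\Gamma$-module (the usual Serre criterion), and since $\mathfrak{p}_E^n$ is $\mathbb{Z}_p$-free, with $p$ the residue characteristic of $F$, the prime-to-$p$ parts of its Tate cohomology vanish automatically. So it suffices to determine exactly when $\widehat{H}^i(\Delta,\mathfrak{p}_E^n)=0$ for all $i$ and all $p$-subgroups $\Delta$ of $\Gamma$.

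The second reduction passes to $\Gamma_1$, which is a normal $p$-subgroup of $\Gamma$ with $E/E^{\Gamma_1}$ totally ramified of group $\Gamma_1$, and $E^{\Gamma_1}/F$ at most tamely ramified (its inertia group $\Gamma_0/\Gamma_1$ having order prime to $p$). By the Lyndon--Hochschild--Serre spectral sequence for Tate cohomology, which degenerates as soon as the module is cohomologically trivial over the chosen normal subgroup, $\mathfrak{p}_E^n$ is cohomologically trivial over $\Gamma$ if and only if it is cohomologically trivial over $\Gamma_1$ and $(\mathfrak{p}_E^n)^{\Gamma_1}=\mathfrak{p}_E^n\cap E^{\Gamma_1}$ is cohomologically trivial over $\Gamma/\Gamma_1$. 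The latter is automatic: $(\mathfrak{p}_E^n)^{\Gamma_1}$ is a fractional $\mathcal{O}_{E^{\Gamma_1}}$-ideal, and in a tamely ramified extension every fractional ideal is cohomologically trivial (Noether's theorem, after reducing to the inertia group, of order prime to $p$; cf.\ \cite{F83}). Hence $\mathfrak{p}_E^n$ is $\mathcal{O}_F[\Gamma]$-free if and only if it is cohomologically trivial over the $p$-group $\Gamma_1$.

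Now I would compute. For any subgroup $\Delta$ of $\Gamma_1$ the extension $E/E^\Delta$ is totally ramified of degree $|\Delta|$ with lower-numbering ramification groups $\Delta\cap\Gamma_i$, and writing $d_\Delta:=v_E(\mathcal{D}_{E/E^\Delta})$ the standard identities $\mathrm{Tr}_{E/E^\Delta}(\mathfrak{p}_E^m)=\mathfrak{p}_{E^\Delta}^{\lfloor(m+d_\Delta)/|\Delta|\rfloor}$ and $\mathfrak{p}_E^n\cap E^\Delta=\mathfrak{p}_{E^\Delta}^{\lceil n/|\Delta|\rceil}$ give $\widehat{H}^0(\Delta,\mathfrak{p}_E^n)=\mathfrak{p}_{E^\Delta}^{\lceil n/|\Delta|\rceil}/\mathfrak{p}_{E^\Delta}^{\lfloor(n+d_\Delta)/|\Delta|\rfloor}$. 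If $E/F$ is weakly ramified then $\Gamma_i=1$ for $i\geq2$, so the filtration of $E/E^\Delta$ reads $\Delta=\Delta=1=\cdots$, whence $d_\Delta=2(|\Delta|-1)$, and an elementary analysis of the two exponents shows $\widehat{H}^0(\Delta,\mathfrak{p}_E^n)=0$ exactly when $n\equiv1\pmod{|\Delta|}$; together with the Herbrand quotient of $\mathfrak{p}_E^n$ over a cyclic $\Delta$ (which equals $1$ since $E$ is a free $\mathbb{Q}_p[\Delta]$-module, again by the normal basis theorem) this proves the base case $|\Gamma_1|=p$ of the equivalence
\[
\mathfrak{p}_E^n \ \text{is cohomologically trivial over}\ \Gamma_1 \ \Longleftrightarrow\ n\equiv 1 \pmod{|\Gamma_1|},
\]
and the general case follows by induction on $|\Gamma_1|$: one applies the same spectral-sequence argument to a normal index-$p$ subgroup $\Gamma_1'$ of $\Gamma_1$ (a short computation with differents shows $E^{\Gamma_1'}/E^{\Gamma_1}$ is again totally and weakly ramified of degree $p$), so the induction closes and the two congruences obtained combine to exactly $n\equiv1\pmod{|\Gamma_1|}$. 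Conversely, if $E/F$ is not weakly ramified then $\Gamma_2\neq1$; taking $\Delta=\Gamma_2$ one has $\Delta\cap\Gamma_i=\Gamma_2$ for $i\leq2$, hence $d_{\Gamma_2}\geq3(|\Gamma_2|-1)$, and the same computation forces $\widehat{H}^0(\Gamma_2,\mathfrak{p}_E^n)\neq0$ for every $n$; so $\mathfrak{p}_E^n$ is never cohomologically trivial over $\Gamma$ and never $\mathcal{O}_F[\Gamma]$-free.

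The step I expect to be the main obstacle is the passage from the (short and explicit) vanishing criterion for $\widehat{H}^0$ to genuine cohomological triviality over $\Gamma_1$ when $\Gamma_1$ is non-cyclic (elementary abelian): this forces either a direct computation of a neighbouring Tate group or the inductive reduction above, together with careful floor/ceiling bookkeeping and attention to the small cases $|\Gamma_1|=p$, including $p=2$. It is precisely here that weak ramification enters essentially — it makes $\Gamma_1$ elementary abelian and pins every relevant different exponent to $2(|\Delta|-1)$ — and, as the final computation shows, without it the cohomology never vanishes at all.
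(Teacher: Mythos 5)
The paper does not prove this lemma at all: it is quoted verbatim as K\"ock's theorem \cite[Th. 1.1]{K}, so there is no internal proof to compare against. Your argument is, as far as I can check, a correct reconstruction of the standard proof (and essentially the strategy K\"ock himself uses, via cohomological triviality, Noether--Ullom for the tame layer, and explicit Tate $\widehat{H}^0$ computations with the trace formula $\mathrm{Tr}_{E/E^\Delta}(\mathfrak{p}_E^m)=\mathfrak{p}_{E^\Delta}^{\lfloor (m+d_\Delta)/|\Delta|\rfloor}$); the floor/ceiling analysis, the Herbrand-quotient step for cyclic $\Delta$, the induction through a normal index-$p$ subgroup, and the converse via $\Delta=\Gamma_2$ with $d_{\Gamma_2}\geq 3(|\Gamma_2|-1)$ all check out. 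One point in the write-up should be tightened: the ``if and only if'' you extract from the Lyndon--Hochschild--Serre reduction is not literally an equivalence at the level of \emph{all} subgroups, because for a proper subgroup $H\leq\Gamma$ the inflation isomorphism produces $\widehat{H}^i\bigl(H/(H\cap\Gamma_1),M^{H\cap\Gamma_1}\bigr)$, and $M^{H\cap\Gamma_1}$ is not the module $M^{\Gamma_1}$ you assumed cohomologically trivial. The standard repair is Nakayama's criterion: an $\mathcal{O}_F$-free module is cohomologically trivial over $\Gamma$ as soon as $\widehat{H}^i$ vanishes in two consecutive degrees for one Sylow subgroup of each prime; since $\Gamma_1$ is contained in every $p$-Sylow subgroup $P$ of $\Gamma$ and $P/\Gamma_1$ is a subgroup of $\Gamma/\Gamma_1$, your inflation computation then does close the loop (and likewise inside the induction on $|\Gamma_1|$). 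A second, purely cosmetic remark: the elementary-abelian structure of $\Gamma_1$ under weak ramification is true (via $\Gamma_1/\Gamma_2$ embedding in the residue field) but is not actually needed, since any nontrivial $p$-group has a normal subgroup of index $p$.
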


This result immediately implies that condition (ii) implies condition (iii) and also shows that conditions (iii) implies condition (ii) provided that, via defining $n$ by the equality $\mathcal{A}_{E/F} = \mathfrak{p}_E^n$, one has $n \equiv 1 \pmod{|\Gamma_1|}$.

To prove this, we recall (from \cite[Chap. IV, \S2, Prop. 4]{S2}) the Hilbert's formula for the valuation of the different $\mathcal{D}_{E/F}$ of $E/F$,
\begin{equation}\label{Eq: Hilbert}
	{\rm ord}_{E}(\mathcal{D}_{E/F})=\sum_{i=0}^{\infty}(|\Gamma_{i}|-1).
\end{equation}

We set $I:=\Gamma_0$, $W:=\Gamma_1$ and $C:=I/W$, and we recall from \cite[Chap. IV, \S2, Cor. 4 to Prop. 7]{S2} that $|C| = |I|/|W|$ is prime to $|W|$. Then, since $\Gamma_2$ is assumed to be trivial, Hilbert's formula implies that 
\begin{align}\label{explicit computation} 
n = {\rm ord}_{E}(\mathcal{A}_{E/F})= -\frac{{\rm ord}_{E}(\mathcal{D}_{E/F})}{2} =&\, -\frac{1}{2}(|I|-1+|W|-1 )\nonumber \\ 
=&\, -\frac{1}{2}(|C|\cdot|W|-1+|W|-1 )\nonumber \\
=& \, -\frac{|C|+1}{2}\cdot |W| +1.
\end{align}

If $p=2$, then $|W|$ is even and so $|C|$ is odd and, hence, $(|C| +1)/2$ is an integer. So, this formula implies the required congruence $n \equiv 1 \pmod{|W|}$. 

If $p > 2$, then $|W|$ is odd and hence, since $n$ is an integer, this formula implies that $|C|+1$ is even, and therefore, that $n \equiv 1 \pmod{|W|}$, as required. 

This completes the proof of Proposition~\ref{Lemma: weakly rami proj iff}.

\begin{remark}\label{Cor: odd inertia} If $E/F$ is a weakly ramified Galois extension of $p$-adic fields, with $\Gamma = \Gal(E/F)$,  then the order of the inertia subgroup $\Gamma_0$ is odd if either  
	\begin{itemize}
		\item[(i)] $E/F$ is tamely ramified and $\mathcal{A}_{E/F}$ exists, or 
		\item[(ii)] $p$ is odd  and $\mathcal{A}_{E/F}$ exists.
	\end{itemize}	
	The claim in case (i) follows directly from Hilbert's formula \eqref{Eq: Hilbert} and the fact that, in this case, $\Gamma_i$ is trivial for all $i > 0$. Case (ii) is valid since, $|\Gamma_1|$ is odd in this case and so the computation (\ref{explicit computation}) implies that $|C|=|I|/|\Gamma_1|$, and hence also $|I|$, are odd.
\end{remark}

\subsection{The canonical relative elements of Bley, Burns, and Hahn}\label{S: elements of BBH}
In this section, we extend the key definition of Bley, Burns, and Hahn from \cite{BBH} to the setting of all weakly ramified Galois extensions $L/K$ of number fields for which the inverse different is a square. We also show that this element decomposes as a sum of canonical elements arising from the extensions obtained by completing $L/K$ at the places that ramify in $L/K$. 

\subsubsection{The global element}\label{def global element}
Let $L/K$ be a weakly ramified finite Galois extension of number fields with Galois group $G:={\rm Gal}(L/K)$. In this section, we define the canonical relative elements of $K_{0}(\mathbb{Z}[G], \mathbb{Q}^{c}[G])$ using the construction in \cite[\S2A3 and \S5]{BBH}.

To start, we identify the set $\Sigma(L)$ of field embeddings $L\rightarrow \mathbb{Q}^{c}$ with the set of field embeddings $L\rightarrow \mathbb{C}$ and consider the free $\ZZ[G]$-module $ H_{L}:=\prod_{\Sigma(L)}\mathbb{Z}$ (upon which $G$ acts via its natural composition action on $\Sigma(L)$). We then consider the isomorphism of $\QQ^c[G]$-modules $\kappa_{L}:\mathbb{Q}^{c}\otimes_{\mathbb{Q}}L \rightarrow \prod_{\Sigma(L)}\mathbb{Q}^{c}= \mathbb{Q}^{c}\otimes_{\mathbb{Z}}H_{L}$ that sends $z\otimes l$ for each $z \in \mathbb{Q}^c$ and $l \in L$ to the vector $(\sigma(l)z)_{\sigma\in \Sigma(L)}$. Then, for any full projective $\mathbb{Z}[G]$-sublattice $\mathcal{L}$ of $L$, we obtain an element of $K_{0}(\mathbb{Z}[G], \mathbb{Q}^{c}[G])$ by setting 
\begin{equation*}\label{Eq: lattice triple}
	\Delta(\mathcal{L}) := [\mathcal{L},\kappa_{L}, H_{L}]   \in   K_{0}(\mathbb{Z}[G], \mathbb{Q}^{c}[G]).
\end{equation*}
In particular, if $\mathcal{A}_{L/K}$ exists, then (by Proposition~\ref{Lemma: weakly rami proj iff}) it gives rise to a well-defined element $\Delta(\mathcal{A}_{L/K})$. 

In the sequel, we shall investigate relations between this element and the element of $\zeta(\QQ^c[G])^\times$ obtained by setting 
\[ T^{(2)}_{L/K} := (\tau^{G}_{K}\cdot (\psi_{2, *}  -1 )(\tau'_{L/K}))^{-1},\]
where elements $\tau^{G}_{K}$ and $\tau'_{L/K}$ are as defined in \S\ref{S: modified GGS}.

As a first step, we prove the following result.  

\begin{lemma}\label{Prop: global a in Q} Suppose that $\mathcal{A}_{L/K}$ exists and fix an element $x$ of $K_1(\QQ^c[G])$ such that the difference $\partial^1_{\ZZ,\QQ^c,G}(x) - 
	\Delta(\mathcal{A}_{L/K})$ belongs to the subgroup $K_0(\ZZ[G],\QQ[G])$ of $K_0(\ZZ[G],\QQ^c[G])$. Then, one has 
	\[ {\rm Nrd}_{\QQ^c[G]}(x)\cdot T^{(2)}_{L/K} \in \zeta(\QQ[G])^\times.\]
\end{lemma}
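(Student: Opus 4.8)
The idea is to reduce the integrality assertion over $\QQ$ to a collection of purely local statements, one for each rational prime $\ell$, and then to play off the hypothesis against the functional-equation identity for Galois--Gauss sums recorded in Proposition~\ref{Prop: JG decomp}(iii). First I would note that, since $\partial^1_{\ZZ,\QQ^c,G}(x) - \Delta(\mathcal{A}_{L/K})$ lies in $K_0(\ZZ[G],\QQ[G])$, it suffices to show that ${\rm Nrd}_{\QQ^c[G]}(x)\cdot T^{(2)}_{L/K}$ lies in $\zeta(\QQ^c[G])^\times$ \emph{and} is fixed by $\Omega_{\QQ} = \Gal(\QQ^c/\QQ)$; the field $\QQ$ being the fixed field, Galois descent for the relevant $\zeta$-valued invariants then forces membership in $\zeta(\QQ[G])^\times$. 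The Galois-equivariance of ${\rm Nrd}_{\QQ^c[G]}$ and of the constituent Galois--Gauss and unramified-characteristic terms (via Proposition~\ref{Prop: adam}(ii) together with the standard Galois action on global Galois--Gauss sums, \cite[Chap.~III, \S2]{F83}) reduces this to checking that $\Delta(\mathcal{A}_{L/K})$ is itself $\Omega_{\QQ}$-fixed up to the ambiguity absorbed by $x$, which is immediate since $\mathcal{A}_{L/K}$ and $H_L$ are $\ZZ[G]$-lattices and the $\Omega_{\QQ}$-action permutes the embeddings in $\Sigma(L)$ compatibly with $\kappa_L$.

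The heart of the argument is then the integrality itself. Here I would exploit the relation in Proposition~\ref{Prop: JG decomp}(iii), which writes the product $J_{2,L/K}\cdot(1-\psi_{2,*})(y_{L/K})$, an element already known to lie in $\zeta(\QQ[G])^\times$ by Proposition~\ref{Prop: JG sums equivariant} and equation~\eqref{unram maximal order}, in terms of $\tau^G_K$, $(\psi_{2,*}-1)(\tau'_{L/K})$ and $(\tau^\dagger_{L/K})^{-1}$. Since $T^{(2)}_{L/K} = (\tau^G_K\cdot(\psi_{2,*}-1)(\tau'_{L/K}))^{-1}$, this identity lets me re-express ${\rm Nrd}_{\QQ^c[G]}(x)\cdot T^{(2)}_{L/K}$ as ${\rm Nrd}_{\QQ^c[G]}(x)$ times a product of the manifestly $\QQ$-rational element $(J_{2,L/K}\cdot(1-\psi_{2,*})(y_{L/K}))^{-1}$ and the absolute Galois--Gauss sum $\tau^\dagger_{L/K}$. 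So it remains to control $\tau^\dagger_{L/K}$: the classical global functional equation for Artin $L$-functions shows $\tau^\dagger_{L/K}$ differs from the square root of a rational (in fact positive integral) quantity — essentially an absolute discriminant — only by a unit, and the combination with ${\rm Nrd}_{\QQ^c[G]}(x)$ inherits rationality from the hypothesis that $\partial^1_{\ZZ,\QQ^c,G}(x)$ agrees with the $\QQ$-rational lattice invariant $\Delta(\mathcal{A}_{L/K})$ modulo $K_0(\ZZ[G],\QQ[G])$. Concretely, applying $\partial^1_{\ZZ,\QQ^c,G}$ to ${\rm Nrd}^{-1}$ of the product and using that $\partial^1_{\ZZ,\QQ^c,G}\circ{\rm Nrd}^{-1}_{\QQ^c[G]}$ kills precisely the rational part by exactness of \eqref{Diagram: Kseq}, one checks that the only way the difference can land in $K_0(\ZZ[G],\QQ[G])$ is if the reduced norm already lies in $\zeta(\QQ[G])^\times$.

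For the bookkeeping I would work prime-by-prime using the decomposition~\eqref{Eq: KT iso K0} and Lemma~\ref{ext bound hom}(iii): the hypothesis that the difference lies in $K_0(\ZZ[G],\QQ[G])$ means its image in each $K_0(\ZZ_\ell[\Gamma],\QQ^c_\ell[\Gamma])$ comes from $K_0(\ZZ_\ell[\Gamma],\QQ_\ell[\Gamma])$, and over each $\QQ_\ell$ one has ${\rm im}({\rm Nrd}_{\QQ_\ell[G]}) = \zeta(\QQ_\ell[G])^\times$ by the local case of the Hasse--Schilling--Maass theorem (Lemma~\ref{Prop: Nrd bijective cond}(ii)(c)); so locally the desired $\ell$-adic integrality is automatic, and the content is purely the passage back to $\QQ$ via $\zeta(\QQ[G])^\times = \zeta(\QQ^c[G])^{\times,\Omega_\QQ} \cap \bigcap_\ell \zeta(\QQ_\ell[G])^\times$.

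\textbf{Main obstacle.} The delicate point is the $\Omega_{\QQ}$-descent step: $x$ itself need not be Galois-fixed, only the \emph{class} $\partial^1_{\ZZ,\QQ^c,G}(x)$ modulo $K_0(\ZZ[G],\QQ[G])$ is pinned down, so I must argue that the ambiguity in $x$ (a unit in $K_1$ with image in the rational part) does not obstruct the conclusion, and that the various Galois--Gauss-sum factors assemble into something whose reduced-norm-inverse boundary is genuinely rational rather than merely $\QQ^c$-rational. Getting the compatibility of the $\Omega_{\QQ}$-action across the reduced norm, the extended boundary map $\delta_G$, and the Galois--Jacobi identity to line up exactly — rather than up to an uncontrolled root of unity — is where the real care is needed, and is presumably handled by invoking the precise form of \cite[(4.5)]{BBH} together with the Galois-equivariance statements in Proposition~\ref{Prop: adam}(ii) and Definition~\ref{Def: equiv global GGS}.
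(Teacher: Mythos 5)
Your opening reduction is the same as the paper's: using Proposition~\ref{Prop: JG decomp}(iii) together with Proposition~\ref{Prop: JG sums equivariant} and \eqref{unram maximal order}, the claim reduces to showing that ${\rm Nrd}_{\QQ^c[G]}(x)\cdot(\tau^\dagger_{L/K})^{-1}$ lies in $\zeta(\QQ[G])^\times$, and the observation that the hypothesis only pins $x$ down up to elements with rational reduced norm is also correct (any two admissible $x$ differ by an element coming from $K_1(\QQ[G])$, by the commutativity and exactness of \eqref{Diagram: Kseq}). But from that point on there is a genuine gap: you never produce an admissible element whose reduced norm can actually be compared with $\tau^\dagger_{L/K}$. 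The paper does this by choosing a $\QQ[G]$-isomorphism $\lambda:\QQ\otimes_\ZZ H_L\cong L$, forming the class $\langle\lambda\rangle\in K_1(\QQ^c[G])$ of the automorphism $\kappa_L\circ(\QQ^c\otimes_\QQ\lambda)$, checking that $\partial^1_{\ZZ,\QQ^c,G}(\langle\lambda\rangle)-\Delta(\mathcal{A}_{L/K})\in K_0(\ZZ[G],\QQ[G])$, and then invoking the theorem of Bley and Burns \cite[Prop.~3.4 and Rem.~3.5]{BB} (in substance, Fr\"ohlich's comparison of the $\Omega_{\QQ}$-action on norm resolvents with that on Galois--Gauss sums) to conclude that ${\rm Nrd}_{\QQ^c[G]}(\langle\lambda\rangle)\cdot(\tau^\dagger_{L/K})^{-1}\in\zeta(\QQ[G])^\times$. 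That resolvent-versus-Gauss-sum input is the entire content of the lemma, and nothing in your proposal supplies it.

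The substitutes you offer do not work. Galois descent cannot be checked ``term by term'': neither ${\rm Nrd}_{\QQ^c[G]}(x)$ for an arbitrary admissible $x$ nor $\tau^\dagger_{L/K}$ is individually $\Omega_{\QQ}$-fixed (absolute Galois--Gauss sums are genuinely irrational; their Galois action is twisted by determinantal/unramified terms), so the $\Omega_{\QQ}$-invariance of the product is precisely the nontrivial compatibility you would need to prove, not a consequence of Proposition~\ref{Prop: adam}(ii) or of $\mathcal{A}_{L/K}$ and $H_L$ being $\ZZ[G]$-lattices. The functional-equation remark only controls absolute values of $\tau^\dagger_{L/K}$, not its Galois orbit; the assertion that ``$\partial^1\circ{\rm Nrd}^{-1}$ kills precisely the rational part by exactness'' is not what \eqref{Diagram: Kseq} says, and the inference that the hypothesis forces the reduced norm to be rational is circular without an explicit admissible $x$ in hand; and the prime-by-prime paragraph via Lemma~\ref{Prop: Nrd bijective cond}(ii)(c) is beside the point, since the statement concerns rationality of a central element, not membership in the image of a local reduced norm. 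Your closing ``main obstacle'' paragraph in effect concedes the missing step and defers it to \cite[(4.5)]{BBH} and the equivariance of Adams operations, but those results only accomplish the reduction to $\tau^\dagger_{L/K}$; the needed comparison of ${\rm Nrd}_{\QQ^c[G]}(x)$ with $\tau^\dagger_{L/K}$ is exactly \cite[Prop.~3.4, Rem.~3.5]{BB}, which must be cited (or reproved) for the argument to close.
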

\begin{proof}  We recall (from Proposition~\ref{Prop: JG sums equivariant}) that $J_{2, L/K}$ belongs to $\zeta(\mathbb{Q}[G])^\times$. It is also clear that (by the explicit definition of $y_{L/K}$) the element 
	$(1- \psi_{2, *})(y_{L/K})$ belongs to $\zeta(\mathbb{Q}[G])^\times$. Upon recalling the equality of Proposition \ref{Prop: JG decomp}(iii), we are therefore reduced to showing that, for the specified element $x$, the element 
	${\rm Nrd}_{\QQ^c[G]}(x)\cdot (\tau_{L/K}^{\dagger})^{-1}$ belongs to $\zeta(\QQ[G])^\times$.
	
	To do this, we fix an isomorphism $\lambda: \QQ\otimes_\ZZ H_{L} \cong L$ of $\QQ[G]$-modules. Then, the automorphism $\kappa_L\circ (\QQ^c\otimes_\QQ\lambda)$ of $\QQ^c\otimes_\ZZ H_L$ defines an element $\langle \lambda\rangle$ of $K_1(\QQ^c[G])$, for which one has 
	\[\partial_{\ZZ,\QQ^c,G}^1(\langle \lambda\rangle) = \bigl[H_L,\kappa_L\circ (\QQ^c\otimes_\QQ \lambda ),H_L\bigr]
	=\bigl[ H_L,(\QQ^c\otimes_\QQ \lambda ),\mathcal{A}_{L/K}\bigr] +  \Delta(\mathcal{A}_{L/K}). \]
%
	In particular, since the element $ \bigl[ H_L,(\QQ^c\otimes_\QQ \lambda),\mathcal{A}_{L/K}\bigr] = \bigl[ H_L,\lambda,\mathcal{A}_{L/K}\bigr]\in K_0(\ZZ[G],\QQ[G])$, the above two equalities can therefore be combined with the assumption that $\partial^1_{\ZZ,\QQ^c,G}(x) - 
	\Delta(\mathcal{A}_{L/K}) \in K_0(\ZZ[G],\QQ[G])$ to imply that $\partial_{\ZZ,\QQ^c,G}^1(\langle \lambda\rangle\cdot x^{-1})$ belongs to $K_0(\ZZ[G],\QQ[G])$. 
	We also note that, the commutativity of the diagram (\ref{Diagram: Kseq}) implies that if there is an element $\alpha \in K_1(\QQ^c[G])$ such that the image $\partial_{\ZZ,\QQ^c,G}^1(\alpha)$ belongs to $K_0(\ZZ[G],\QQ[G])$, then $\alpha$ must belong to the image of $K_1(\QQ[G])$ in $K_1(\QQ^c[G])$. Given this argument, we can therefore deduce that $\langle \lambda\rangle$ differs from $x$ by an element of $K_1(\QQ[G])$ and so it is enough for us to show that
	\[ {\rm Nrd}_{\QQ^c[G]}(\langle \lambda\rangle)\cdot (\tau_{L/K}^{\dagger})^{-1} \in \zeta(\QQ[G])^\times. \]
	This is proved by Burns and Bley in \cite[Prop. 3.4 and Rem. 3.5]{BB}. 
	%
	%
\end{proof}

The result of Lemma \ref{Prop: global a in Q} allows us to make the following definition. In this definition, we use the extended boundary homomorphism $\delta_{G}: \zeta(\QQ[G])^\times \to K_0(\ZZ[G],\QQ[G])$ from Lemma \ref{ext bound hom}.

\begin{definition}\label{Def: frak a global} Suppose $\mathcal{A}_{L/K}$ exists and fix an element $x$ of $K_1(\QQ^c[G])$ such that the difference $\partial^1_{\ZZ,\QQ^c,G}(x) - 
	\Delta(\mathcal{A}_{L/K})$ belongs to $K_0(\ZZ[G],\QQ[G])$. Then, the `canonical relative element' of Bley, Burns and Hahn is the element of $K_{0}(\mathbb{Z}[G], \mathbb{Q}[G])$ that is obtained by setting
	\[\mathfrak{a}_{L/K}:= \Delta(\mathcal{A}_{L/K}) - \partial^1_{\ZZ,\QQ^c,G}(x) + \delta_{G}
	\bigl({\rm Nrd}_{\QQ^c[G]}(x)\cdot T^{(2)}_{L/K}\bigr).\]
\end{definition}

The basic properties of this element are recorded in the following result. We note that claim (ii) of this result shows that the element $\mathfrak{a}_{L/K}$ defined above does generalise that defined (for odd degree extensions) by Bley, Burns and Hahn in \cite{BBH}. 

In claim (iii) of the next result, we use the following notation: for each prime $\ell$ and embedding of fields $j^c_{\ell}:\mathbb{Q}^{c}\rightarrow \mathbb{Q}^{c}_{\ell}$, we also write $j^c_\ell$ for the induced homomorphism of rings $\QQ^c[G] \to \QQ^c_\ell[G]$, and we consider the homomorphism of abelian groups 
\begin{align}\label{jlc def}
j^c_{\ell, *}: K_{0}(\mathbb{Z}[\Gamma], \mathbb{Q}^c[\Gamma]) \, & \to K_{0}(\mathbb{Z}_\ell[\Gamma],\mathbb{Q}_\ell^c[\Gamma]), \nonumber \\
[P, \phi, Q] \, & \mapsto  [P_\ell, \mathbb{Q}_\ell^c\otimes_{\mathbb{Q}^c, j_\ell}\phi, Q_\ell],
\end{align}
where $\mathbb{Q}_\ell^c\otimes_{\mathbb{Q}^c,  j_\ell}\phi$ denotes the composite isomorphism 
\[ \mathbb{Q}_\ell^c\otimes_{\mathbb{Z}_\ell}(\mathbb{Z}_\ell\otimes_{\mathbb{Z}} P) \xrightarrow{\cong}  \mathbb{Q}_\ell^c\otimes_{\mathbb{Q}^c} (\mathbb{Q}^c\otimes_{\mathbb{Z}} P) \xrightarrow{\mathbb{Q}_\ell^c\otimes_{\mathbb{Q}^c , j_\ell} \phi } \mathbb{Q}_\ell^c\otimes_{\mathbb{Q}^c} (\mathbb{Q}^c\otimes_{\mathbb{Z}} Q) \xrightarrow{\cong} \mathbb{Q}_\ell^c\otimes_{\mathbb{Z}_\ell}(\mathbb{Z}_\ell\otimes_{\mathbb{Z}} Q),\]
with $\mathbb{Q}_\ell^c\otimes_{\mathbb{Q}^c,  j_\ell}-$ the tensor product obtained by using $j^c_\ell$ to regard $\mathbb{Q}_\ell^c$ as a $\mathbb{Q}^c$-module.

\begin{prop}\label{frak a global independence} Suppose $\mathcal{A}_{L/K}$ exists. Then, the following claims are valid. 
	\begin{itemize}
		\item[(i)] $\mathfrak{a}_{L/K}$ is independent of the choice of $x$. 
		\item[(ii)] If $|G|$ is odd, then $\mathfrak{a}_{L/K}$ is equal to the element defined by Bley, Burns and Hahn in \cite[\S5]{BBH}.
		\item[(iii)] Fix a prime number $\ell$ and an embedding $j^c_{\ell}:\mathbb{Q}^{c}\rightarrow \mathbb{Q}^{c}_{\ell}$. Then, in $K_0(\ZZ_\ell[G],\QQ_\ell[G])$, one has 
		\[ j_{\ell,\ast}(\mathfrak{a}_{L/K}) = j^c_{\ell,\ast}(\Delta(\mathcal{A}_{L/K})) + \delta_{\ZZ_\ell,\QQ_\ell^c,G}( j^c_\ell(T^{(2)}_{L/K})),\]
		where $j_{\ell,*}$ denotes the natural projection homomorphism $K_0(\ZZ[\Gamma],\QQ[\Gamma]) \to K_0(\ZZ_\ell[\Gamma],\QQ_\ell[\Gamma])$ in \eqref{K_0 proj K_0_p} and $\delta_{\ZZ_\ell,\QQ_\ell^c,G}$ denotes the composite homomorphism 
		\[ \partial^1_{\ZZ_\ell,\QQ^c_\ell,G}\circ ({\rm Nrd}_{\QQ^c_\ell[G]})^{-1}: \zeta(\QQ_\ell^c[G])^\times\to K_0(\ZZ_\ell[G],\QQ_\ell^c[G]).\]
		\item[(iv)] For any element $x$ as in the definition of $\mathfrak{a}_{L/K}$, we define an element of ${\rm Cl}(\ZZ[G])$ by setting  
		\[ W^{(2)}_{L/K} :=  \partial^0_{\ZZ,\QQ,G}\bigl(\delta_G({\rm Nrd}_{\QQ^c[G]}(x)\cdot T^{(2)}_{L/K})\bigr).\]
		This element is independent of $x$, of order dividing $2$ and vanishes if $G$ has no irreducible symplectic characters. In addition, one has  
		\[  \partial^0_{\ZZ,\QQ,G}(\mathfrak{a}_{L/K}) = [\mathcal{A}_{L/K}] + W^{(2)}_{L/K}.\]
	\end{itemize}
\end{prop}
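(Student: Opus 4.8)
The plan is to prove the four assertions of Proposition~\ref{frak a global independence} essentially by unwinding the definition of $\mathfrak{a}_{L/K}$ and exploiting the compatibility properties of the extended boundary homomorphism $\delta_G$ recorded in Lemma~\ref{ext bound hom}.

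\medskip

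\noindent\textbf{Claim (i).} Suppose $x$ and $x'$ are two elements of $K_1(\QQ^c[G])$ with $\partial^1_{\ZZ,\QQ^c,G}(x)-\Delta(\mathcal{A}_{L/K})$ and $\partial^1_{\ZZ,\QQ^c,G}(x')-\Delta(\mathcal{A}_{L/K})$ both in $K_0(\ZZ[G],\QQ[G])$. Then $\partial^1_{\ZZ,\QQ^c,G}(x\cdot (x')^{-1})$ lies in $K_0(\ZZ[G],\QQ[G])$, so by the commutativity of the diagram \eqref{Diagram: Kseq} (exactly as in the proof of Lemma~\ref{Prop: global a in Q}) the element $x\cdot(x')^{-1}$ lies in the image of $K_1(\QQ[G])$. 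Writing $y:={\rm Nrd}_{\QQ[G]}(x\cdot(x')^{-1})\in\zeta(\QQ[G])^{\times}$ the difference of the two candidate values of $\mathfrak{a}_{L/K}$ equals $-\partial^1_{\ZZ,\QQ^c,G}(x\cdot(x')^{-1})+\delta_G(y)$, and by Lemma~\ref{ext bound hom}(i) we have $\delta_G(y)=\delta_G({\rm Nrd}_{\QQ[G]}(x\cdot(x')^{-1}))=\partial^1_{\ZZ,\QQ,G}((x\cdot(x')^{-1})')$, which (under $\iota_2$) agrees with $\partial^1_{\ZZ,\QQ^c,G}(x\cdot(x')^{-1})$; hence the difference is zero. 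The key input is the identification of $\partial^1\circ{\rm Nrd}$ with $\delta_G$ via Lemma~\ref{ext bound hom}(i), together with the functoriality of the bottom-to-top maps $\iota_i$ in \eqref{Diagram: Kseq}.

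\medskip

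\noindent\textbf{Claim (ii).} When $|G|$ is odd, $G$ has no irreducible symplectic characters, so by Lemma~\ref{Prop: Nrd bijective cond}(ii)(b) the map ${\rm Nrd}_{\QQ[G]}$ is bijective and, by Lemma~\ref{ext bound hom}(ii), $\delta_G=\partial^1_{\ZZ,\QQ,G}\circ({\rm Nrd}_{\QQ[G]})^{-1}$. Substituting this into Definition~\ref{Def: frak a global} and comparing with the formula given in \cite[\S5]{BBH} for their element (which is assembled from the same data $\Delta(\mathcal{A}_{L/K})$ and $T^{(2)}_{L/K}$ via the boundary maps), one sees the two expressions literally coincide; this is a direct translation of notation and requires no new argument.

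\medskip

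\noindent\textbf{Claim (iii).} Apply the projection $j_{\ell,*}$ to the defining formula for $\mathfrak{a}_{L/K}$. Localisation commutes with the boundary maps (by the very definitions \eqref{K_0 proj K_0_p} and \eqref{jlc def}), so $j_{\ell,*}(\partial^1_{\ZZ,\QQ^c,G}(x))=\partial^1_{\ZZ_\ell,\QQ^c_\ell,G}(j^c_\ell(x))=\delta_{\ZZ_\ell,\QQ_\ell^c,G}({\rm Nrd}_{\QQ^c_\ell[G]}(j^c_\ell(x)))$, and likewise $j_{\ell,*}(\Delta(\mathcal{A}_{L/K}))=j^c_{\ell,*}(\Delta(\mathcal{A}_{L/K}))$. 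For the $\delta_G$-term we use Lemma~\ref{ext bound hom}(iii): $j_{\ell,*}\circ\delta_G=\partial^1_{\ZZ_\ell,\QQ_\ell,\Gamma}\circ({\rm Nrd}_{\QQ_\ell[G]})^{-1}=\delta_{\ZZ_\ell,\QQ_\ell,G}$, and this factors through $\delta_{\ZZ_\ell,\QQ_\ell^c,G}$ applied to $j^c_\ell$ of the argument. Collecting terms, the contributions involving $j^c_\ell(x)$ cancel — precisely the local analogue of the cancellation already used in (i) — leaving $j_{\ell,*}(\mathfrak{a}_{L/K})=j^c_{\ell,*}(\Delta(\mathcal{A}_{L/K}))+\delta_{\ZZ_\ell,\QQ_\ell^c,G}(j^c_\ell(T^{(2)}_{L/K}))$. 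The only delicate point is keeping track of the three scalar-extension maps $\QQ\to\QQ_\ell$, $\QQ^c\to\QQ^c_\ell$ and $\QQ_\ell\to\QQ^c_\ell$ and checking that they are compatible with the reduced norm; this is where one must be careful but no essential difficulty arises.

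\medskip

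\noindent\textbf{Claim (iv).} Apply $\partial^0_{\ZZ,\QQ,G}$ to the defining formula. Since the bottom row of \eqref{Diagram: Kseq} is a complex, $\partial^0_{\ZZ,\QQ,G}\circ\partial^1_{\ZZ,\QQ,G}=0$; applying this (after noting $\partial^1_{\ZZ,\QQ^c,G}(x)$ lies in $K_0(\ZZ[G],\QQ[G])$ by the defining hypothesis and equals $\iota_2$ of an element of $K_0(\ZZ[G],\QQ[G])$ coming from $K_1(\QQ[G])$ up to the $\Delta$-term) gives $\partial^0_{\ZZ,\QQ,G}(\mathfrak{a}_{L/K})=\partial^0_{\ZZ,\QQ,G}(\Delta(\mathcal{A}_{L/K}))+\partial^0_{\ZZ,\QQ,G}(\delta_G({\rm Nrd}_{\QQ^c[G]}(x)\cdot T^{(2)}_{L/K}))$, in which the first summand is $[\mathcal{A}_{L/K}]-[H_L]=[\mathcal{A}_{L/K}]$ (as $H_L$ is free) and the second is $W^{(2)}_{L/K}$ by definition. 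Independence of $W^{(2)}_{L/K}$ from $x$ follows from claim (i) (any two choices of $x$ differ by an element of $K_1(\QQ[G])$, whose image under $\delta_G$ maps to $0$ in ${\rm Cl}(\ZZ[G])$). For the order and vanishing statements, write $T^{(2)}_{L/K}$ in terms of the decomposition \eqref{Eq: element in centre}: modulo $\zeta(\QQ[G])^{\times+}$ it is governed purely by the signs of its real coefficients at the irreducible symplectic characters (using the description \eqref{description of image nrd RR} and Lemma~\ref{Prop: Nrd bijective cond}(iii)), so its class in ${\rm Cl}(\ZZ[G])$ has order at most $2$ and is trivial when $G$ has no irreducible symplectic characters. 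The main obstacle here is the sign analysis: one must verify that $({\rm Nrd}_{\QQ^c[G]}(x)\cdot T^{(2)}_{L/K})^2$ lies in $\zeta(\QQ[G])^{\times+}$ (which it does by Lemma~\ref{Prop: Nrd bijective cond}(i)) so that $\delta_G$ of the square vanishes in ${\rm Cl}(\ZZ[G])$ by Lemma~\ref{ext bound hom}(ii) and the complex property, forcing $2\cdot W^{(2)}_{L/K}=0$.

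\medskip

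\noindent Overall, the only genuinely substantive step is the sign/symplectic analysis in claim (iv); everything else is a careful but formal manipulation of the long exact sequence \eqref{Diagram: Kseq} and the properties of $\delta_G$ from Lemma~\ref{ext bound hom}.
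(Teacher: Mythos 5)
Your proposal is correct and proceeds by essentially the same route as the paper: claim (i) via the cancellation using Lemma~\ref{ext bound hom}(i) and the diagram \eqref{Diagram: Kseq}, claim (ii) via Lemma~\ref{Prop: Nrd bijective cond}(ii)(b) and Lemma~\ref{ext bound hom}(ii), claim (iii) via Lemma~\ref{ext bound hom}(iii) and the compatibility of localisation with the boundary maps, and claim (iv) via $\partial^0\circ\partial^1=0$, the freeness of $H_L$, and the fact that squares lie in $\zeta(\QQ[G])^{\times+}$. The only slip is in (iv), where $\partial^1_{\ZZ,\QQ^c,G}(x)$ itself does not lie in $K_0(\ZZ[G],\QQ[G])$ (only its difference with $\Delta(\mathcal{A}_{L/K})$ does); this is harmless, since the vanishing you need is $\partial^0_{\ZZ,\QQ^c,G}\circ\partial^1_{\ZZ,\QQ^c,G}=0$ in the $\QQ^c$-row of \eqref{Diagram: Kseq}, which is exactly what the paper uses.
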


\begin{proof} Throughout this argument, we fix an element $x$ of $K_1(\QQ^c[G])$ as specified in the definition of $\mathfrak{a}_{L/K}$. We also set $y := T^{(2)}_{L/K}$. 
	
	To prove claim (i), we choose another element $x'$ of $K_1(\QQ^c[G])$ that has the same property as $x$. Then, one has 
	\[ \partial_{\ZZ,\QQ^c,G}^1(x^{-1}x') = \partial_{\ZZ,\QQ^c,G}^1(x) - \partial_{\ZZ,\QQ^c,G}^1(x')\in K_0(\ZZ[G],\QQ[G]),\]
	and so the diagram (\ref{Diagram: Kseq}) implies that $x^{-1}x'$ belongs to $K_1(\QQ[G])$. It follows that  
	\begin{align*}
		\delta_G({\rm Nrd}_{\QQ^c[G]}(x')y) 
		=\,&\delta_G ({\rm Nrd}_{\QQ^c[G]}(x'x^{-1}){\rm Nrd}_{\QQ^c[G]}(x)y)\\
		=\, &\delta_G ({\rm Nrd}_{\QQ[G]}(x'x^{-1}){\rm Nrd}_{\QQ^c[G]}(x)y)\\
		=\, &\delta_G ({\rm Nrd}_{\QQ[G]}(x'x^{-1})) + \delta_G({\rm Nrd}_{\QQ^c[G]}(x)y)\\
		=\, &  \partial^{1}_{\ZZ,\QQ,G}(x'x^{-1}) +  \delta_G({\rm Nrd}_{\QQ^c[G]}(x)y)\\
		=\, &  \partial^{1}_{\ZZ,\QQ^c,G}(x') - \partial^{1}_{\ZZ,\QQ^c,G}(x) +  \delta_G({\rm Nrd}_{\QQ^c[G]}(x)y),\end{align*}
	where the fourth equality is given by Lemma \ref{ext bound hom}(i), and the last one again follows from the commutativity of the diagram (\ref{Diagram: Kseq}). This proves claim (i).
	%
	
	For claim (ii), we first note that if $G$ has odd order, then $\zeta(\QQ[G])^{\times + } = \zeta(\QQ[G])^\times$ (see Lemma~\ref{Prop: Nrd bijective cond}(ii)). Therefore, in this case, Lemma \ref{ext bound hom}(ii) implies that 
	\begin{align*}
		\delta_G ({\rm Nrd}_{\QQ^c[G]}(x)y) 
		=\, & \partial^1_{\ZZ,\QQ,G} \bigl(({\rm Nrd}_{\QQ[G]})^{-1} ( {\rm Nrd}_{\QQ^c[G]}(x)y ) \bigr)\\
		=\, & \partial^1_{\ZZ,\QQ^c,G}\bigl(x\cdot({\rm Nrd}_{\QQ^c[G]})^{-1}(y)\bigr)\\ 
		=\, &  \partial^1_{\ZZ,\QQ^c,G}(x) + \partial^1_{\ZZ,\QQ^c,G}\bigl(({\rm Nrd}_{\QQ^c[G]})^{-1}(y)\bigr)
	\end{align*}
	and hence that $\mathfrak{a}_{L/K}$ is equal to the element $\Delta(\mathcal{A}_{L/K}) + \partial^1_{\ZZ,\QQ^c,G}\bigl(({\rm Nrd}_{\QQ^c[G]})^{-1}(y)\bigr)$, which is considered by Bley, Burns and Hahn in loc. cit.  
	
	Turning to claim (iii), we write $j^{c,1}_{\ell,\ast}$ for the homomorphism $K_1(\QQ^c[G]) \to K_1(\QQ_\ell^c[G])$ that is induced by the embedding $j_\ell^c$. Then, Lemma \ref{ext bound hom}(iii) implies that 
	\begin{align*} 
		j_{\ell, *}\bigl(\delta_{G}({\rm Nrd}_{\QQ^c[G]}(x)y)\bigr) 
		=&\, \partial_{\ZZ_\ell,\QQ_\ell,G} \left( ( {\rm Nrd}_{\QQ_\ell[G]})^{-1}  \bigl(j_\ell^c({\rm Nrd}_{\QQ^c[G]}(x)y) \bigr)\right)\\
		=&\, \partial_{\ZZ_\ell,\QQ^c_\ell,G}\bigl(j^{c,1}_{\ell,\ast}(x)\cdot ({\rm Nrd}_{\QQ^c_\ell[G]})^{-1}(j_\ell^c(y))\bigr)\\
		=&\, \partial_{\ZZ_\ell,\QQ^c_\ell,G}\bigl(j^{c,1}_{\ell,\ast}(x)\bigr) + \delta_{\ZZ_\ell,\QQ_\ell^c,G}\bigl(j_\ell^c(y)\bigr)\\
		=&\, j_{\ell,\ast}^c\bigl(\partial_{\ZZ,\QQ^c,G}(x)\bigr) + \delta_{\ZZ_\ell,\QQ_\ell^c,G}\bigl(j_\ell^c(y)\bigr).\end{align*} 
	For the last equality, one can check directly from the explicit definition of \eqref{jlc def} and the connecting homomorphism in \eqref{Diagram: Kseq} that $\partial_{\ZZ_\ell,\QQ^c_\ell,G} \circ j^{c,1}_{\ell,\ast} = j_{\ell,\ast}^c \circ \partial_{\ZZ,\QQ^c,G}$. Thus, the equality in claim (iii) follows directly from the definition of $\mathfrak{a}_{L/K}$ as an explicit sum.  
	
	Finally, we note that the second displayed equality in claim (iv) is true since the explicit definition of $\mathfrak{a}_{L/K}$ implies that  
	\begin{align*}  \partial^0_{\ZZ,\QQ,G}(\mathfrak{a}_{L/K}) 
		=&\,   \partial^0_{\ZZ,\QQ^c,\Gamma}(\Delta(\mathcal{A}_{L/K})) -  \partial^0_{\ZZ,\QQ^c,G}(\partial^1_{\ZZ,\QQ^c,G}(x)) +  \partial^0_{\ZZ,\QQ,G}\bigl(\delta_{G}
		\bigl({\rm Nrd}_{\QQ^c[G]}(x)y\bigr)\bigr)\\
		=&\,  \partial^0_{\ZZ,\QQ^c,G}\bigl([\mathcal{A}_{L/K}, \kappa_L,H_L]\bigr) + W^{(2)}_{L/K}\\
		=&\, [\mathcal{A}_{L/K}] - [H_L] + W^{(2)}_{L/K} \\
		=&\, [\mathcal{A}_{L/K}] + W^{(2)}_{L/K}.\end{align*}
	Here, the second equality follows from the explicit definition of $W^{(2)}_{L/K}$ and the fact that the exactness of (\ref{Diagram: Kseq}) implies that the composite map $\partial^0_{\ZZ,\QQ^c,G}\circ \partial^1_{\ZZ,\QQ^c,G}$ is zero. The third equality follows from the explicit definition of the connecting homomorphism $\partial^0_{\ZZ,\QQ^c,G}$, and the last equality is given by the fact that $H_L$ is a free $\ZZ[G]$-module.  
	
	Since both elements $\mathfrak{a}_{L/K}$ and $\mathcal{A}_{L/K}$ are independent of $x$ (the first by claim (i) and the second is obvious), the above equality implies that $W^{(2)}_{L/K}$ is independent of $x$. 
	
	To prove that $2\cdot W^{(2)}_{L/K} = 0$, we first note that, by Lemma \ref{Prop: Nrd bijective cond}(i), $({\rm Nrd}_{\QQ^c[G]}(x)y)^2$ belongs to $\zeta(\QQ[G])^{\times +}$ and hence, one has
\begin{align*}  
2\cdot W^{(2)}_{L/K} =&\,  \partial^0_{\ZZ,\QQ,G}\bigl(\delta_G(({\rm Nrd}_{\QQ^c[G]}(x)y)^2)\bigr)\\
=&\, \partial^0_{\ZZ,\QQ,G} \left(  \partial_{\ZZ,\QQ,G}^1 \bigl( ({\rm Nrd}_{\QQ[G]})^{-1} ( ({\rm Nrd}_{\QQ^c[G]}(x)y)^2)\bigr)\right)\\
=&\, 0. \end{align*}
	Here, the second quality follows from Lemma \ref{ext bound hom}(ii), and the last one follows from the fact that $\partial^0_{\ZZ,\QQ,G}\circ \partial^1_{\ZZ,\QQ,G}$ is equal to the zero map.  
	
	In a similar way, one can deduce that $W^{(2)}_{L/K}$ vanishes whenever $G$ has no irreducible symplectic characters from the results of Lemma \ref{ext bound hom}(ii) and Lemma \ref{Prop: Nrd bijective cond}(ii)(b).
\end{proof} 

\begin{remark} A natural problem is to explicitly describe the difference between the class $W^{(2)}_{L/K}$ defined in 
	Proposition \ref{frak a global independence}(iv) and the Cassou-Nogu\`es-Fr\"ohlich root number class $W_{L/K}$, which plays a key role in classical Galois module theory. We shall consider the problem again in \S\ref{S: weakly classgroup}. \end{remark}

\subsubsection{The local element}\label{def local element}
Fix a finite Galois extension $E/F$ of local fields of residue characteristic $\ell$ and set $\Gamma := {\rm Gal}(E/F)$, we use the exactly same recipe \cite[\S7A]{BBH} of Bley, Burn and Hahn to define the canonical local relative element of $K_{0}(\mathbb{Z}_{\ell}[\Gamma], \mathbb{Q}_{\ell}^{c}[\Gamma])$.

\begin{definition}\label{local a def} 
	We define an element of $K_{0}(\mathbb{Z}_{\ell}[\Gamma], \mathbb{Q}_{\ell}^{c}[\Gamma])$ by setting 
	\[\mathfrak{a}_{E/F}:= \Delta(\mathcal{A}_{E/F}) - \delta_{\ZZ_\ell,\QQ_\ell^c,\Gamma}(j^c_{\ell}(T^{(2)}_{E/F})) - U_{E/F},  \]
where $\delta_{\ZZ_\ell,\QQ_\ell^c,\Gamma}:= \partial_{\ZZ_\ell,\QQ^c_\ell,\Gamma} \circ ({\rm Nrd}_{\QQ^c_\ell[\Gamma]} )^{-1}$, $j^c_\ell$ is the ring embedding $\zeta(\QQ^c[\Gamma]) \rightarrow \zeta(\QQ^c_\ell[\Gamma])$ induced by a choice of field embedding $\QQ^c\to \QQ_\ell^c$ and $U_{E/F}$ is the canonical `unramified' element of $K_{0}(\mathbb{Z}_{\ell}[\Gamma], \mathbb{Q}_{\ell}^{c}[\Gamma])$ that is defined by Breuning in \cite{B04}.
\end{definition}

Here the element $\Delta(\mathcal{A}_{E/F})$ an $T^{(2)}_{E/F}$ are defined to be the local analogues of the elements defined in the last section.

Next, for each element $a$ in $E$ generating a normal basis of $E/F$, and each character $\chi$ of representation $T_\chi : \Gamma \rightarrow {\rm GL}_{n}(\QQ_\ell^c)$ (here we consider $\QQ_\ell^c$ for the coefficient field for group representations), we recall from \cite[Chap. I, \S4 and Chap. III, \S3, (3.1)]{F83} the definitions of the \textit{resolvent} element and the (local) \emph{norm resolvent}
\begin{equation*}\label{Eq: resolvent def}
	(a|\chi):=\mathrm{Det}(\sum_{g\in \Gamma} g(a) T_{\chi}(g^{-1})),  \quad \mathcal{N}_{F/\mathbb{Q}_\ell}(a|\chi)= \prod_{\omega}(a|\chi^{\omega^{-1}})^{\omega}, 
\end{equation*}
where in the second term $\omega$ products through a transversal of $\Omega_{F}$ in $\Omega_{\mathbb{Q}_\ell}$. 

We now follow Breuning in giving an explicit description of the element $\mathfrak{a}_{E/F}$ in terms of the norm resolvents. To do this, we fix a $\mathbb{Z}_\ell$-basis $\{a_\sigma \}_{\Sigma(F)}$ of $\mathcal{O}_F$ and set $\delta_{F}:=\det(\tau(a_\sigma) )_{ \tau, \sigma \in \Sigma(F)} \in \QQ_\ell^c$.

\begin{prop}\label{Prop: local in Q}\ 
	\begin{itemize}
		\item[(i)] $\mathfrak{a}_{E/F}$ is independent of the choice of embedding $j_\ell^c$ and belongs to $K_{0}(\mathbb{Z}_{\ell}[\Gamma], \mathbb{Q}_{\ell}[\Gamma])$.
		\item[(ii)] (Breuning) Fix an element $a$ of $E$ such that 
		$\mathcal{A}_{E/F}= \mathcal{O}_{F}[\Gamma]\cdot a$. Then, one has 
		\[ \sum_{\chi\in \widehat{\Gamma}} (\delta_{F}^{\chi(1)} \cdot \mathcal{N}_{F/\mathbb{Q}_{\ell}}(a|\chi))e_{\chi}\in \zeta(\QQ^c_\ell[\Gamma])^\times\]
		and, in $K_{0}(\mathbb{Z}_{\ell}[\Gamma], \mathbb{Q}^c_{\ell}[\Gamma])$, one has 
		\[ \Delta(\mathcal{A}_{E/F}) = \delta_{\ZZ_\ell,\QQ_\ell^c,\Gamma} \left(\sum_{\chi\in \widehat{\Gamma}} (\delta_{F}^{\chi(1)} \cdot \mathcal{N}_{F/\mathbb{Q}_{\ell}}(a|\chi))e_{\chi} \right). \]
		\item[(iii)] (Breuning) For each embedding $k:\QQ^c \to \QQ_\ell^c$, element $\delta_F/k(\tau_F)$ belongs to $(\mathcal{O}_\ell^t)^\times$. 
		\item[(iv)] (Breuning) Let $j_{\ell, *}^{t}$ be the homomorphism defined in \eqref{Eq: Taylor map def0}, then $U_{E/F} \in \ker(j_{\ell, *}^{t})$.
	\end{itemize}
\end{prop}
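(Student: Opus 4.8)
\textbf{Proof proposal for Proposition \ref{Prop: local in Q}.}

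The plan is to reduce all four claims to the corresponding statements in the work of Breuning, since the local element $\mathfrak{a}_{E/F}$ has been defined by transcribing verbatim Breuning's recipe from \cite{B04}, and the ingredients (the element $U_{E/F}$, the norm resolvent description, the Taylor-style fixed-point vanishing) are exactly the ones Breuning works with; the only subtlety is that we must verify none of his arguments secretly invoked a parity hypothesis that we have now dropped. For claim (i), I would first argue that $\mathfrak{a}_{E/F}$ is independent of the choice of embedding $j^c_\ell$: the three terms $\Delta(\mathcal{A}_{E/F})$, $\delta_{\ZZ_\ell,\QQ_\ell^c,\Gamma}(j^c_\ell(T^{(2)}_{E/F}))$ and $U_{E/F}$ each a priori depend on $j^c_\ell$, but two different embeddings differ by an element of $\Omega_{\QQ_\ell}$, and since the $\Omega_{\QQ_\ell}$-action permutes the idempotents $e_\chi$ compatibly with the Galois action on Galois-Gauss sums (Proposition \ref{Prop: adam}(ii) for the $\psi_2$-twist and the standard functional equation of $\tau_{E/F}$ for the untwisted part) and on the resolvents, the combined expression is fixed. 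Then, to see that $\mathfrak{a}_{E/F}$ lands in the rational subgroup $K_0(\ZZ_\ell[\Gamma],\QQ_\ell[\Gamma])$, I would use the explicit resolvent formula of claim (ii) to rewrite $\Delta(\mathcal{A}_{E/F})$ and combine it with the definition of $T^{(2)}_{E/F}$ and Remark \ref{Rem: loc JG sums def}(ii); the point is that the Galois-equivariance just established forces the relevant central element to have coefficients fixed by $\Omega_{\QQ_\ell}$, hence to lie in $\zeta(\QQ_\ell[\Gamma])^\times$, and then the diagram \eqref{Diagram: Kseq} (its $\ell$-local version) pins $\mathfrak{a}_{E/F}$ into $K_0(\ZZ_\ell[\Gamma],\QQ_\ell[\Gamma])$.

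For claim (ii), I would invoke Fröhlich's computation of $\Delta(\mathcal{L})$ via norm resolvents: given a normal basis generator $a$ with $\mathcal{A}_{E/F} = \mathcal{O}_F[\Gamma]\cdot a$, the isomorphism $\kappa_E$ composed with the map $\mathcal{O}_F[\Gamma] \xrightarrow{\sim} \mathcal{A}_{E/F}$, $g \mapsto g(a)$, has reduced norm given precisely by the resolvent $(a|\chi)$ in each component $\chi$, and passing from $\QQ_\ell$-coefficients to $\QQ_\ell^c$-coefficients (which is what the triple $[\mathcal{A}_{E/F},\kappa_E,H_E]$ requires, since $H_E$ is indexed by embeddings into $\QQ_\ell^c$) introduces the norm resolvent $\mathcal{N}_{F/\QQ_\ell}(a|\chi)$ together with the discriminant factor $\delta_F^{\chi(1)}$ coming from the chosen $\ZZ_\ell$-basis of $\mathcal{O}_F$; this is exactly \cite[Chap.\ III, \S3]{F83} and Breuning's reformulation, and it carries over with no change because it is purely a manipulation of the boundary map $\delta_{\ZZ_\ell,\QQ_\ell^c,\Gamma}$ and uses no hypothesis on $|\Gamma|$. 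Claims (iii) and (iv) are then direct citations: the statement that $\delta_F/k(\tau_F)$ is a unit in $(\mathcal{O}_\ell^t)^\times$ is \cite{B04} (it rests only on the fact that $\tau_F$ differs from the discriminant determinant by a tame unit, which is Fröhlich's ``fixed point'' analysis of the first resolvent), and $U_{E/F} \in \ker(j^t_{\ell,*})$ is built into Breuning's definition of $U_{E/F}$ — indeed this is the defining property that makes $U_{E/F}$ the correct ``unramified'' correction term, proved in loc.\ cit.\ via Taylor's Fixed Point Theorem (\S\ref{S: taylor fixed point} above).

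The main obstacle I expect is claim (i), specifically the passage into the rational subgroup $K_0(\ZZ_\ell[\Gamma],\QQ_\ell[\Gamma])$: in Breuning's original setting and in \cite{BBH} this is clean because there one can lean on the fact that $\psi_2$ commutes with all the relevant functors, but here $|\Gamma|$ may be even, so I must be careful that the $\psi_2$-twisted Galois-Gauss sum $(\psi_{2,*}-1)(\tau'_{E/F})$ still transforms correctly under $\Omega_{\QQ_\ell}$. Fortunately Proposition \ref{Prop: adam}(ii) guarantees that $\psi_2$ does commute with the $\Omega_{\QQ}$-action (the failure of $\psi_2$ is only with \emph{induction}, not with the Galois action), so the Galois-descent argument for rationality goes through; I would make this explicit by checking that for $g \in \Omega_{\QQ_\ell}$ one has $g(\tau(K_v,\psi_2(\chi))) = \tau(K_v, g\circ\psi_2(\chi)) = \tau(K_v,\psi_2(g\circ\chi))$, matching the permutation $e_\chi \mapsto e_{g\circ\chi}$ of idempotents, and similarly for $y_{E/F}$ using Definition \ref{Def: unrami char y} together with the fact that $y(F,-) \in {\rm Hom}^+(R_\Gamma,\mu(\QQ^c))^{\Omega_\QQ}$. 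Once the equivariance is in hand, rationality and $j^c_\ell$-independence follow formally, and the remaining claims are, as noted, essentially quotations of Breuning's results applied verbatim in this slightly more general context.
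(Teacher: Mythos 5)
Your citations for claims (iii) and (iv) match the paper (Breuning), and your outline for claim (ii) is the standard resolvent computation that the paper also simply attributes to Breuning, so those parts are fine. The problem is claim (i). The paper's proof of (i) rests on one key manoeuvre that your proposal is missing: using Remark \ref{Rem: loc JG sums def}(ii) to factor $T^{(2)}_{E/F}$ as $\tau^{\dagger}_{E/F}$ times $J_{2,E/F}\cdot(1-\psi_{2,*})(y_{E/F})$, and then observing (Remark \ref{Rem: loc JG sums def}(i)) that this second factor already lies in $\zeta(\QQ[\Gamma])^{\times}$. All of the $\psi_2$-dependence is thereby absorbed into a rational factor, so both the independence of $j^c_\ell$ and the containment in $K_0(\ZZ_\ell[\Gamma],\QQ_\ell[\Gamma])$ reduce to statements about the \emph{untwisted} element $\tau^{\dagger}_{E/F}$, which carry no parity hypothesis and are quoted directly from \cite[Lem. 2.2]{B04} and \cite[Prop. 7.1]{BBH}. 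You instead try to handle the $\psi_2$-twisted Gauss sums head-on by a Galois-descent computation, and this is where your argument has a genuine gap.

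Concretely, the identity you propose to verify, $g(\tau(K_v,\psi_2(\chi))) = \tau(K_v,\psi_2(g\circ\chi))$ for $g\in\Omega_{\QQ_\ell}$, is false in general: the $\Omega_{\QQ}$-action on Galois-Gauss sums is only equivariant up to a nontrivial determinant twist (this is the Galois-action formula of \cite[p.~42, Th.~5.1]{M}, and it is precisely why the modified sums $\tau'$, $\tau^*$ and the unramified characteristic $y$ exist at all), and the same is true of norm resolvents (cf. \cite[Chap.~I, \S4, Prop.~4.4]{F83}). Proposition \ref{Prop: adam}(ii) only tells you that $\psi_2$ commutes with the $\Omega_{\QQ}$-action on virtual characters; it says nothing about equivariance of the sums themselves. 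The rationality of the central element you write down is therefore not a formal consequence of "compatible permutation of idempotents" but of the cancellation of these cocycles between resolvent and (modified) Gauss sum, which is exactly the nontrivial content of the results of Fröhlich, Breuning and \cite{BBH} that the paper cites rather than reproves — if your naive equivariance were correct, it would already make elements like $\delta_{\ZZ_\ell,\QQ_\ell^c,\Gamma}(j^c_\ell(\tau_{E/F}))$ rational on their own, which is not the case. A smaller slip in the same step: two embeddings $\QQ^c\to\QQ^c_\ell$ differ by precomposition with an element of $\Omega_{\QQ}$ (an automorphism of $\QQ^c$), not by an element of $\Omega_{\QQ_\ell}$, so even the set-up of your descent argument needs to be rephrased before the cocycle bookkeeping can begin. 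Either supply that bookkeeping in full (essentially redoing \cite[Lem.~4.16]{B_phd} and \cite[Prop.~7.1]{BBH}), or follow the paper and first strip off the rational Jacobi-sum factor so that only the untwisted, already-established statements remain.
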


\begin{proof} Since the terms $\Delta(\mathcal{A}_{E/F})$ and $U_{E/F}$ obviously do not depend on $j_\ell^c$, the element $\mathfrak{a}_{E/F}$ is independent of $j_\ell^c$ if $j_\ell^c(T^{(2)}_{E/F})$ is independent of the choice. 
	
In addition, from the equality in Remark \ref{Rem: loc JG sums def}(ii), one has $	T^{(2)}_{E/F} = \tau_{E/F}^{\dagger} \cdot (J_{2, E/F}\cdot (1 - \psi_{2, *})(y_{E/F})) $. We also note that Remark~\ref{Rem: loc JG sums def}(i) and the explicit definition of $y_{E/F}$ combine to imply that 
	\begin{equation}\label{convenient containment} (J_{2, E/F}\cdot(1 - \psi_{2, *})(y_{E/F})) )\in \zeta(\mathbb{Q}[\Gamma])^{\times}.\end{equation}
	It is therefore sufficient to show that $\tau_{E/F}^{\dagger}$ is independent of the choice of $j^c_{\ell}$, and this is proved by Breuning in \cite[Lem. 2.2]{B04}.
	
	Thus, to prove claim (i), it is enough to show that $\mathfrak{a}_{E/F}$ belongs to $K_{0}(\mathbb{Z}_{\ell}[\Gamma], \mathbb{Q}_{\ell}[\Gamma])$, and this follows directly from (\ref{convenient containment}) and the containment 
	\[\Delta(\mathcal{A}_{E/F})- \delta_{\ZZ_\ell,\QQ_\ell^c,\Gamma}(j^c_{\ell}(\tau_{E/F}^{\dagger})) - U_{E/F} \in K_{0}(\mathbb{Z}_{\ell}[\Gamma], \mathbb{Q}_{\ell}[\Gamma]) \]
	that is proved by Bley, Burns and Hahn in \cite[Prop. 7.1]{BBH}. 
	
	Claims (ii), (iii) and (iv) are proved by Breuning in \cite[Lem. 4.16, Lem. 4.29 and Lem. 4.4]{B_phd} respectively.  
\end{proof}

\begin{remark}\label{frak a' def}
	Let $\QQ_\ell^t$ denote the maximal tamely ramified extension of $\QQ_\ell$ and $\mathcal{O}_\ell^t$ denote the valuation ring of $\QQ_\ell^t$. In the sequel, (as a consequence of the Taylor's Fixed Point Theorem, which is discussed in \S\ref{S: taylor fixed point}) we will often consider the image in $K_0(\mathcal{O}_\ell^t[\Gamma], \QQ_\ell^c[\Gamma])$ of the element $\mathfrak{a}_{E/F}$. To help address the problem, we set
	\[\mathfrak{a}'_{E/F}:= \Delta(\mathcal{A}_{E/F}) - \delta_{\ZZ_\ell,\QQ_\ell^c,\Gamma}(j^c_{\ell}(T^{(2)}_{E/F})).  \]
	Then, Definition~\ref{local a def} and Proposition~\ref{Prop: local in Q}(iv) combine to imply that the images in $K_0(\mathcal{O}_\ell^t[\Gamma], \QQ_\ell^c[\Gamma])$ of the elements $\mathfrak{a}_{E/F}$ and $\mathfrak{a}'_{E/F}$ coincide. We will use this fact frequently in the sequel.
\end{remark}

\subsubsection{From global to local}

By adapting an argument of Bley, Burns and Hahn, we shall now describe the precise connection between  the elements that are defined in \S~\ref{def global element} for number fields and in \S\ref{def local element} for local fields. 

To do this, we fix a weakly ramified Galois extension $L/K$ of number fields for which $\mathcal{A}_{L/K}$ exists, and we set $G:=\mathrm{Gal}(L/K)$. For each place $v$ of $K$, we fix a place $w$ of $L$ lying above $v$ and identify the Galois group of $L_w/K_v$ with the decomposition subgroup $G_w$ of $w$ in $G$.

We recall the induction homomorphisms on relative $K$-groups that are defined in \eqref{Eq: K_0 ind def}. For each prime $\ell$, we identify $K_0(\ZZ_\ell[G],\QQ_\ell[G])$ with a subgroup of $K_0(\ZZ[G],\QQ[G])$ by means of the decomposition (\ref{Eq: KT iso K0}). 

\begin{prop}\label{Lemma: frak a local decomp}
	Let $L/K$ be a weakly ramified finite Galois extension of number fields, with $G:=\mathrm{Gal}(L/K)$, for which $\mathcal{A}_{L/K}$ exists. Then, in $K_0(\ZZ[G],\QQ[G])$, one has 
	\[\mathfrak{a}_{L/K} = \sum_{\ell}\sum_{v|\ell} {\rm i}_{G_{w}, \mathbb{Q}_{\ell}}^{G, *} (\mathfrak{a}_{L_{w}/K_{v}}), \]
	where the first sum runs over all rational primes $\ell$ and the second over all places $v$ of $K$ of residue characteristic $\ell$. 
\end{prop}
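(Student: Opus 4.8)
The plan is to reduce the assertion to a family of statements about $\ell$-components and then to adapt the local-to-global argument of Bley, Burns and Hahn. Both sides of the claimed equality lie in $K_0(\ZZ[G],\QQ[G])$ — the left-hand side by Definition~\ref{Def: frak a global}, and the right-hand side because each homomorphism ${\rm i}^{G,*}_{G_w,\QQ_\ell}$ preserves the $\QQ_\ell$-rational subgroup (Proposition~\ref{Prop: local in Q}(i)) — and the double sum is finite since only finitely many places of $K$ ramify in $L$. As the canonical decomposition \eqref{Eq: KT iso K0} identifies $K_0(\ZZ[G],\QQ[G])$ with $\bigoplus_\ell K_0(\ZZ_\ell[G],\QQ_\ell[G])$ compatibly with induction, I would fix a prime $\ell$ and an embedding $j^c_\ell\colon\QQ^c\to\QQ_\ell^c$ and prove that $j_{\ell,*}(\mathfrak{a}_{L/K})=\sum_{v\mid\ell}{\rm i}^{G,*}_{G_w,\QQ_\ell}(\mathfrak{a}_{L_w/K_v})$ in $K_0(\ZZ_\ell[G],\QQ_\ell[G])$, the sum being over the places $v$ of $K$ of residue characteristic $\ell$.

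By Proposition~\ref{frak a global independence}(iii) the left-hand side is expressed through $j^c_{\ell,*}(\Delta(\mathcal{A}_{L/K}))$ and $\delta_{\ZZ_\ell,\QQ_\ell^c,G}(j^c_\ell(T^{(2)}_{L/K}))$ alone, and I would decompose each of these over places. For the lattice term, $\ZZ_\ell\otimes_\ZZ\mathcal{A}_{L/K}$ is $G$-equivariantly isomorphic to the direct sum over $v\mid\ell$ of the $\ZZ_\ell[G]$-modules induced from the $G_w$-lattices $\mathcal{A}_{L_w/K_v}$, compatibly with the analogous decompositions of the free module $H_L$ and of the scalar extension $\kappa_L$, whence $j^c_{\ell,*}(\Delta(\mathcal{A}_{L/K}))=\sum_{v\mid\ell}{\rm i}^{G,*}_{G_w,\QQ_\ell^c}(\Delta(\mathcal{A}_{L_w/K_v}))$. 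For the Galois--Gauss term, Proposition~\ref{Prop: JG decomp}(i),(ii), the decomposition of the global Galois--Gauss sum into its local factors, and the identity $\tau^\dagger_{L/K}=\tau_{L/K}\cdot\tau^G_K$ (a consequence of inductivity in degree zero of Galois--Gauss sums), together with the local statements of Remark~\ref{Rem: loc JG sums def}, give a factorisation $T^{(2)}_{L/K}=\prod_v\mathrm{\tilde{i}}^{G}_{G_{w}}(T^{(2)}_{L_w/K_v})$ over all finite places $v$ of $K$, a finite product; on applying $j^c_\ell$ and then $\delta_{\ZZ_\ell,\QQ_\ell^c,G}$, and invoking the commutation \eqref{Diagram: K_0 ind commute} of induction with the boundary homomorphism, this becomes $\delta_{\ZZ_\ell,\QQ_\ell^c,G}(j^c_\ell(T^{(2)}_{L/K}))=\sum_v{\rm i}^{G,*}_{G_w,\QQ_\ell^c}(\delta_{\ZZ_\ell,\QQ_\ell^c,G_w}(j^c_\ell(T^{(2)}_{L_w/K_v})))$, again summed over all finite places of $K$.

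Expanding the right-hand side by Definition~\ref{local a def} and cancelling the contributions of the lattice terms and of the Galois--Gauss terms indexed by the places $v\mid\ell$, the identity reduces to a relation between the sum over $v\mid\ell$ of the induced unramified elements ${\rm i}^{G,*}_{G_w,\QQ_\ell}(U_{L_w/K_v})$ and the sum over the places $v$ of residue characteristic $\neq\ell$ of the boundary images ${\rm i}^{G,*}_{G_w,\QQ_\ell^c}(\delta_{\ZZ_\ell,\QQ_\ell^c,G_w}(j^c_\ell(T^{(2)}_{L_w/K_v})))$. I would establish this via Taylor's Fixed Point Theorem (\S\ref{S: taylor fixed point}): after applying the injective composite into $K_0(\mathcal{O}_\ell^t[G],\QQ_\ell^c[G])$, each $U_{L_w/K_v}$ is killed by Proposition~\ref{Prop: local in Q}(iv), while for a place $v$ of residue characteristic $p\neq\ell$ the local Galois--Gauss sum $\tau_{L_w/K_v}$ — and hence $T^{(2)}_{L_w/K_v}$, up to the explicit correction furnished by Breuning's results in Proposition~\ref{Prop: local in Q}(ii),(iii) — is a determinant of a unit of $\mathcal{O}_\ell^t[G_w]$, so that its boundary image lies in the kernel described in \eqref{Eq: Taylor kernel Det}; both sums therefore vanish, and the claim follows by injectivity.

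The hard part will be this last matching of the global-versus-local discrepancy in the Galois--Gauss data against Breuning's unramified element, which is the technical heart of the corresponding argument in \cite{BBH}; I would follow that treatment essentially verbatim. The point to check, for the present more general setting, is that each ingredient of that argument — the place-decompositions of Proposition~\ref{Prop: JG decomp}, the commutation \eqref{Diagram: K_0 ind commute}, Breuning's local results, and Taylor's Fixed Point Theorem — is insensitive to the parity of $|G|$. In particular, although $\psi_2$ need not commute with induction when $|G|$ is even, this causes no difficulty here: the elements $J_{2,L/K}$ and $(1-\psi_{2,*})(y_{L/K})$ are built from Galois--Gauss sums and unramified characteristics, and it is the unconditional decomposition of those over places recorded in Proposition~\ref{Prop: JG decomp} — rather than any compatibility of $\psi_2$ with induction — on which the argument relies.
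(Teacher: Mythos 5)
Your overall route is the same as the paper's (reduce to $\ell$-components via Proposition~\ref{frak a global independence}(iii) and the decomposition \eqref{Eq: KT iso K0}, then follow the proof of \cite[Th.~7.6]{BBH}), but the step on which everything turns is wrong as you have justified it. After the cancellations, the whole content of the proposition is to show that, for a place $v$ of residue characteristic $p\neq\ell$, the element $\delta_{\ZZ_\ell,\QQ_\ell^c,G_w}\bigl(j^c_\ell(T^{(2)}_{L_w/K_v})\bigr)$ dies in $K_0(\mathcal{O}_\ell^t[G_w],\QQ_\ell^c[G_w])$, i.e.\ that $j^c_\ell\bigl((\psi_{2,*}-2)(\tau_{L_w/K_v}\cdot y_{L_w/K_v}^{-1})\bigr)$ lies in ${\rm Det}(\mathcal{O}_\ell^t[G_w]^\times)$. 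You assert that ``the local Galois--Gauss sum $\tau_{L_w/K_v}$ --- and hence $T^{(2)}_{L_w/K_v}$ --- is a determinant of a unit of $\mathcal{O}_\ell^t[G_w]$'' and cite Proposition~\ref{Prop: local in Q}(ii),(iii) for this. Those results of Breuning concern the resolvent description of $\Delta(\mathcal{A}_{E/F})$ and the unit $\delta_F/k(\tau_F)$ at the residue characteristic of $E/F$ itself; they say nothing about the image under $j^c_\ell$ of Galois--Gauss sums attached to places of residue characteristic $p\neq\ell$. Moreover the assertion is not even true for $\tau_{L_w/K_v}$ itself: the extensions $L_w/K_v$ here are only weakly ramified, so one cannot invoke the tame congruence properties, and even in the tame case it is the \emph{modified} sum $\tau\cdot y^{-1}$ (suitably adjusted), not $\tau$, that has the ${\rm Det}$-of-units property away from $p$.

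The genuinely missing ingredient is precisely Lemma~\ref{Prop: decomp pre prop}, which is the one new input this paper adds to the argument of \cite[Th.~7.6]{BBH}: one first uses the theorem of Holland and Wilson \cite[(3.3b$'$)]{HW} to place $j^c_\ell(\tau_{L_w/K_v}\,y_{L_w/K_v}^{-1})$ in ${\rm Det}(\mathcal{O}_{M_\ell}[G_w]^\times)$ for a suitable subfield $M$ of $\QQ(p^\infty)$ with $M_\ell/\QQ_\ell$ unramified, and then --- and this is the point your final paragraph explicitly waves away --- one needs the theorem of Cassou-Nogu\`es and Taylor \cite{Nog_Tay_85} that $\psi_2$ preserves ${\rm Det}(\mathcal{O}_{M_\ell}[G_w]^\times)$ for unramified $M_\ell/\QQ_\ell$, in order to control the $\psi_{2,*}$-twist occurring in $T^{(2)}_{L_w/K_v}$. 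Membership of an element in a ${\rm Det}$ group is not formally stable under precomposition with $\psi_2$, so your claim that no $\psi_2$-related subtlety arises in this step (because only the place-decompositions of Proposition~\ref{Prop: JG decomp} are needed) is exactly where the proof breaks in the even-degree setting; without the Holland--Wilson plus Cassou-Nogu\`es--Taylor argument, the vanishing of the $v\nmid\ell$ contributions under $j^t_{\ell,*}$ is unproved.
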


\begin{proof}
The proof is exactly the same to \cite[Th. 7.6]{BBH} using Proposition~\ref{frak a global independence}(iii) and for the last line of loc.cit that `the image under $j_{\ell, *}^t \circ \delta_{G, \ell}$ of the latter element vanishes as a consequence of', one replaces the referred results by our Lemma~\ref{Prop: decomp pre prop} following.
\end{proof}

\begin{lemma}\label{Prop: decomp pre prop}
If $v$ is a non-Archimedean place of $K$ that does not divide $\ell$, then the element 
		\[ j_{\ell}^c ((\psi_{2, *} -2)(\tau_{L_{w}/K_{v}} \cdot y^{-1}_{L_{w}/K_{v}})) \in {\rm Det}(\mathcal{O}_\ell^t[G_w]^{\times}).\]
\end{lemma}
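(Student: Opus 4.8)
The plan is to reduce the statement to the behaviour of the local Galois-Gauss sum $\tau_{L_w/K_v}$ and the unramified characteristic $y_{L_w/K_v}$ at a place $v$ of residue characteristic $p \neq \ell$, and then to invoke the integrality properties of these objects over the maximal tamely ramified extension $\mathcal{O}_\ell^t$. Write $E/F$ for $L_w/K_v$ and $\Gamma = G_w = \Gal(E/F)$, and note that $\Gamma_1$ (the Sylow $p$-subgroup of $\Gamma_0$) is then an $\ell'$-group, since $p \neq \ell$. I would first record that, by Proposition~\ref{Prop: local in Q}(i) and the factorisation $T^{(2)}_{E/F} = \tau^\dagger_{E/F}\cdot(J_{2,E/F}\cdot(1-\psi_{2,*})(y_{E/F}))$ appearing in its proof, together with the containment \eqref{convenient containment}, it suffices to control the image of $j^c_\ell$ applied to $(\psi_{2,*}-2)(\tau'_{E/F})$; but in fact the cleanest route is to work directly with the element $(\psi_{2,*}-2)(\tau_{E/F}\cdot y_{E/F}^{-1})$ as stated.

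The key step is to use the description of $\ker\bigl(\partial^1_{\mathcal{O}_\ell^t,\QQ_\ell^t,\Gamma}\circ({\rm Nrd}_{\QQ_\ell^t[\Gamma]})^{-1}\bigr)$ as ${\rm Det}(\mathcal{O}_\ell^t[\Gamma]^\times)$ recalled in \S\ref{S: taylor fixed point} (equation \eqref{Eq: Taylor kernel Det}), so that membership in ${\rm Det}(\mathcal{O}_\ell^t[\Gamma]^\times)$ is equivalent to the relevant central element, viewed via a suitable embedding $\QQ^c \to \QQ_\ell^t$ (which is possible precisely because $v$ is \emph{not} $\ell$-adic, so the residue extension and the Galois-Gauss/resolvent data are tame at $\ell$ — this is the Fr\"ohlich--Taylor philosophy), having the shape of a determinant. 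Concretely, I would appeal to the classical computation of local Galois-Gauss sums in terms of norm resolvents and the fact that, for a place not above $\ell$, $\tau(F,\chi)$ and $y(F,\chi)$ differ from units in $\mathcal{O}_\ell^t$ by determinants of elements of $\mathcal{O}_\ell^t[\Gamma]^\times$: this is exactly the content underlying Fr\"ohlich's local theory (as in \cite[Chap. IV]{F83}) and is the analogue of the input used in \cite[Th. 7.6]{BBH}. The twist by $(\psi_{2,*}-2)$ is harmless here: Proposition~\ref{Prop: adam}(iii) gives $\det_{\psi_2(\chi)} = (\det_\chi)^2$, and since $|\Gamma_1|$ is prime to $\ell$ one has enough control over how $\psi_2$ interacts with the tame/wild decomposition of $\Gamma$ to conclude that applying $(\psi_{2,*}-2)$ to a ${\rm Det}$-class, or to a product of a ${\rm Det}$-class with a tamely-behaved Galois-Gauss factor, again lands in ${\rm Det}(\mathcal{O}_\ell^t[\Gamma]^\times)$.

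So the steps, in order, are: (1) localise and reduce to the single extension $E/F = L_w/K_v$ with $p = {\rm char}$ of the residue field of $v$, $p\neq\ell$; (2) recall that $\mathcal{A}_{E/F}$ exists and $E/F$ is weakly ramified, so that by Remark~\ref{Cor: odd inertia} (in the relevant cases) and in general by Hilbert's formula the wild inertia $\Gamma_1$ has order prime to $\ell$; (3) rewrite $(\psi_{2,*}-2)(\tau_{E/F}\cdot y_{E/F}^{-1})$ using $\tau'_{E/F} = \tau_{E/F}y_{E/F}^{-1}$ and expand via the norm-resolvent description of the Galois-Gauss sum (\cite[Chap.\ III--IV]{F83}, as used by Breuning and by Bley--Burns--Hahn); (4) observe that each factor, after applying $j^c_\ell$, is a product of $\mathcal{O}_\ell^t$-units and ${\rm Det}(\mathcal{O}_\ell^t[\Gamma]^\times)$-terms because $v\nmid\ell$; (5) check that the operator $(\psi_{2,*}-2)$ preserves this property, using Proposition~\ref{Prop: adam}(iii) for the determinant factors and $(\ell,|\Gamma_1|)=1$ for the resolvent factors; (6) conclude via the kernel description \eqref{Eq: Taylor kernel Det}. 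The main obstacle I expect is step (5): making precise that $\psi_2$ does not wreck the ${\rm Det}$-integrality, since $\psi_2$ need not commute with induction when $|\Gamma|$ is even — this is exactly the difficulty flagged in the introduction and in Remark~\ref{Rem: adam commute ind coprime}, and the point is that here the \emph{wild} part of $\Gamma$ is coprime to $\ell$, so one only needs the relevant commutation modulo the tame part, for which Proposition~\ref{Prop: adam}(iv) (applied to $\Gamma/\Gamma_1$ or to $\Gamma_1$ itself) and the unramified/tame structure suffice; assembling this carefully, while essentially routine given the machinery recalled in \S\ref{S: taylor fixed point}, is where the real work lies.
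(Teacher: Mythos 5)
Your outline reproduces the general shape of the argument (reduce to the local extension, use the description \eqref{Eq: Taylor kernel Det} of the kernel as ${\rm Det}(\mathcal{O}_\ell^t[G_w]^{\times})$, establish ${\rm Det}$-integrality of the modified Galois--Gauss sum away from $\ell$, then deal with the twist), but the two steps that carry all the weight are not supplied by the ingredients you invoke, and this is a genuine gap rather than a routine verification. First, the integrality input: the extension $L_w/K_v$ may be wildly ramified, so the tame resolvent theory of \cite[Ch.~III--IV]{F83} does not directly give that $j^c_\ell(\tau_{L_w/K_v}\cdot y_{L_w/K_v}^{-1})$ is a ${\rm Det}$-class with integral coefficients; the statement actually needed is the result \cite[(3.3b$'$)]{HW} of Holland and Wilson, which says that for $p\neq \ell$ this element lies in ${\rm Det}(\mathcal{O}_{M_\ell}[G_w]^{\times})$ for a sufficiently large finite subfield $M$ of $\QQ(p^\infty)$ --- crucially, with $M_\ell$ \emph{unramified} over $\QQ_\ell$, not merely contained in $\QQ_\ell^t$. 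Second, and more seriously, your step (5) cannot be carried out with the tools you name: the stability of ${\rm Det}$-groups under $\psi_{2,*}$ is not a consequence of $\det_{\psi_2(\chi)}=(\det_\chi)^2$ (Proposition~\ref{Prop: adam}(iii)), nor of $(\ell,|\Gamma_1|)=1$, nor of Proposition~\ref{Prop: adam}(iv) applied to $\Gamma/\Gamma_1$ or $\Gamma_1$ (that statement concerns commutation with induction, which is not what is at issue). The required fact is the theorem of Cassou-Nogu\`es and Taylor \cite[Th.~1$'$]{Nog_Tay_85}, which gives $\psi_{2}({\rm Det}(\mathcal{O}_{M_\ell}[G_w]^{\times}))\subseteq {\rm Det}(\mathcal{O}_{M_\ell}[G_w]^{\times})$ precisely because $M_\ell/\QQ_\ell$ is unramified; no such stability is available over $\mathcal{O}_\ell^t$ itself, which is why one cannot, as you propose, first place everything in ${\rm Det}(\mathcal{O}_\ell^t[G_w]^{\times})$ and then apply $(\psi_{2,*}-2)$.

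The paper's proof is accordingly organised in the opposite order to yours: it first uses Holland--Wilson to land in ${\rm Det}(\mathcal{O}_{M_\ell}[G_w]^{\times})$ with $M\subseteq\QQ(p^\infty)$ (unramified at $\ell$ since $p\neq\ell$), then applies Cassou-Nogu\`es--Taylor over that unramified coefficient ring to absorb the $\psi_{2,*}$-twist, and only at the very end uses the inclusion $M_\ell\subseteq\QQ_\ell^t$ to pass to ${\rm Det}(\mathcal{O}_\ell^t[G_w]^{\times})$. If you replace your steps (3)--(5) by these two citations in that order, the argument closes; as written, the passage you yourself flag as ``where the real work lies'' is exactly the point where the elementary Adams-operation facts you rely on do not suffice.
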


\begin{proof} We shall adapt an argument of Agboola, Burns, Caputo and the present author in \cite[Prop. 6.6(a)]{AC}. To do this, we write $p$ for the residue characteristic of $v$ (so that $p \not= \ell$) and $\mathbb{Q}(p^\infty)$ for the subextension of $\mathbb{Q}^c$ that is generated by all $p$-power order roots of unity. We also fix an $\ell$-adic place of $\mathbb{Q}^c$, and for each finite extension $M$ of $\QQ$ in $\QQ(p^\infty)$, we write $M_\ell$ for its completion at the fixed $\ell$-adic place and $\mathcal{O}_{M_\ell}$ for the valuation ring of $M_\ell$. 
	
	Now, since $p\neq \ell$, the result \cite[(3.3b$'$)]{HW} of Holland and Wilson implies that, for a large enough subfield $M$ of $\QQ(p^\infty)$, one has $j^c_{\ell}(\tau_{L_{w}/K_{v}} y^{-1}_{L_{w}/K_{v}})\in \mathrm{Det}(\mathcal{O}_{M_\ell}[G_{w}]^{\times})$. It follows that
	\begin{multline*}j^c_{\ell}((\psi_{2, *}-2)(\tau_{L_{w}/K_{v}} y^{-1}_{L_{w}/K_{v}})) = (\psi_{2, *}-2)(j^c_\ell(\tau_{L_{w}/K_{v}} y^{-1}_{L_{w}/K_{v}}))\\ 
		\in \psi_{2}(\mathrm{Det}(\mathcal{O}_{M_\ell}[G_{w}]^{\times}) ) + \mathrm{Det}(\mathcal{O}_{M_\ell}[G_{w}]^{\times}) =  \mathrm{Det}(\mathcal{O}_{M_\ell}[G_{w}]^{\times}), \end{multline*}
	where the equality is a consequence of the fact that, since $M_\ell$ is an unramified extension of $\mathbb{Q}_\ell$, the result \cite[Th. 1$'$]{Nog_Tay_85} of Cassou-Nogues and Taylor proves that $\psi_{2}(\mathrm{Det}(\mathcal{O}_{M_\ell}[G_{w}]^{\times})  \subseteq \mathrm{Det}(\mathcal{O}_{M_\ell}[G_{w}]^{\times})$. Therefore, the result follows from the obvious inclusion
	$M_\ell \subseteq \QQ_\ell^t$ and, hence, $\mathrm{Det}(\mathcal{O}_{M_\ell}[G_{w}]^{\times}) \subseteq \mathrm{Det}(\mathcal{O}_{\ell}^t[G_{w}]^{\times})$.
\end{proof}

\subsection{The conjecture of Bley, Burns and Hahn}\label{S: BBH conj}
In this section, we recall the central conjecture that is formulated by Bley, Burns and Hahn in \cite{BBH}. To do this, 
we first recall a variant of the classical `unramified characteristic' that is introduced in loc. cit.

\subsubsection{The twisted idelic unramified characteristic}\label{tiuc section}
To start, we consider the case of local fields. 

\begin{definition}\label{Def: loc c} Let $E/F$ be a finite Galois extension of $\ell$-adic fields and set $\Gamma := \Gal(E/F)$. Then, the `twisted unramified characteristic' of $E/F$ is the element of $K_{0}(\mathbb{Z}_{\ell}[\Gamma], \mathbb{Q}_{\ell}[\Gamma])$ that is obtained by setting
	\[\mathfrak{c}_{E/F}:= \delta_{\ZZ_\ell,\QQ_\ell,\Gamma}((1 - \psi_{2, *})(y_{E/F}))  , \]
	where $y_{E/F}$ is the equivariant unramified characteristic, and  $\delta_{\ZZ_\ell,\QQ_\ell,\Gamma}$ denotes the composite homomorphism 
	\begin{equation}\label{frak c local delta}
		\partial^1_{\ZZ_\ell,\QQ_\ell,\Gamma}\circ ({\rm Nrd}_{\QQ_\ell[\Gamma]})^{-1}: \zeta(\QQ_\ell[\Gamma])^\times\to K_0(\ZZ_\ell[G],\QQ_\ell[\Gamma]).
	\end{equation}
	
\end{definition}

The following result extends the observations that are made in \cite[Rem. 7.5]{BBH}. 

\begin{lemma}\label{Prop: local c tame vanishes} Let $E/F$ and $\Gamma$ be as above, suppose that the inertia group $I := \Gamma_0$ of $\Gamma$ has odd order, the following claims are valid.
	
	\begin{itemize}
		\item[(i)] If $\Gamma$ has odd order or $\Gamma$ is abelian if its order is even. Then, in $\zeta(\QQ[\Gamma])^\times$, one has  
		\[(1 - \psi_{2, *})(y_{E/F}) = (1-e_I) + \sigma^{-1}e_I, \]
		where $\sigma$ is a lift to $\Gamma$ of the Frobenius element in $\Gamma/I$ and $e_I$ is the idempotent $|I|^{-1}\sum_{g\in I} g$.
		\item[(ii)] The element $\mathfrak{c}_{E/F}$ vanishes if either $E/F$ is tamely ramified or both totally ramified and of odd degree. 
	\end{itemize}
\end{lemma}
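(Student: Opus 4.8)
The plan is to compute $(1-\psi_{2,*})(y_{E/F})$ directly from the definition of the equivariant unramified characteristic and then feed the result into the extended boundary map. First I would fix a lift $\sigma \in \Gamma$ of the Frobenius in $\Gamma/I$ and recall from Definition~\ref{Def: unrami char y} that for each $\phi \in \widehat{\Gamma}$ one has $y(F,\phi) = 1$ if $\phi$ is ramified and $y(F,\phi) = (-1)^{\phi(1)}\det_\phi(\sigma)$ if $\phi$ is unramified. Since $\psi_2$ preserves unramifiedness (an unramified $\phi$ factors through $\Gamma/I$, and $\psi_2(\phi)$ does too) and, by Proposition~\ref{Prop: adam}(iii), $\det_{\psi_2(\phi)} = (\det_\phi)^2$, for an unramified $\phi$ one computes $y(F,\psi_2(\phi)) = (-1)^{\psi_2(\phi)(1)}\det_\phi(\sigma)^2 = (-1)^{\phi(1)}\det_\phi(\sigma)^2$, using that $\psi_2(\phi)(1) = \phi(1)$. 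Hence the $\phi$-component of $(1-\psi_{2,*})(y_{E/F})$ equals $(-1)^{\phi(1)}\det_\phi(\sigma)\cdot\bigl((-1)^{\phi(1)}\det_\phi(\sigma)^2\bigr)^{-1} = \det_\phi(\sigma)^{-1}$ when $\phi$ is unramified, and $1$ when $\phi$ is ramified. For claim~(i) I then need to recognise the central element whose $\phi$-component is $\det_\phi(\sigma)^{-1}$ on unramified $\phi$ and $1$ on ramified $\phi$ as $(1-e_I) + \sigma^{-1}e_I$: this is a matter of checking that multiplication by $(1-e_I)+\sigma^{-1}e_I$ acts on the $\phi$-isotypic component as $1$ if $I\not\subseteq\ker\phi$ (because then $e_I$ kills that component) and as the scalar by which $\sigma^{-1}$ acts on the (one-dimensional, in the abelian case, or via $\det$) module if $I\subseteq\ker\phi$. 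The hypothesis that $\Gamma$ is abelian when $|\Gamma|$ is even (or $|\Gamma|$ odd, where one can argue componentwise more freely) is what guarantees the unramified $\phi$ are linear, so $\det_\phi(\sigma)^{-1}$ genuinely is the $\sigma^{-1}$-eigenvalue; and the oddness of $|I|$ ensures $y(F,\phi)$ and hence the whole expression lies in $\zeta(\QQ[\Gamma])^\times$ rather than needing $\QQ^c$ — indeed $\det_\phi(\sigma)$ is a root of unity of order dividing $|\Gamma/I|$, but rationality comes from summing over Galois orbits, which is already packaged in the statement $y_{E/F}\in\zeta(\QQ[\Gamma])^\times$ from \eqref{unram maximal order}.

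For claim~(ii), in the tamely ramified case every ramification subgroup $\Gamma_i$ with $i\geq 1$ is trivial, so $I = \Gamma_0$ is cyclic of order prime to $\ell$; I would note $\Gamma_1 = 1$, and then the point is that $(1-\psi_{2,*})(y_{E/F})$, whatever it is, lies in $\mathrm{Det}(\mathcal{O}_\ell[\Gamma]^\times)$, so its image under $\delta_{\ZZ_\ell,\QQ_\ell,\Gamma} = \partial^1_{\ZZ_\ell,\QQ_\ell,\Gamma}\circ(\mathrm{Nrd}_{\QQ_\ell[\Gamma]})^{-1}$ vanishes because $\partial^1$ kills the image of $K_1(\ZZ_\ell[\Gamma])$ by exactness of \eqref{Diagram: Kseq}. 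Concretely: by claim~(i) (applicable since $I$ has odd order and — in the tame case — $\Gamma$ need not be abelian, so I would either invoke the more refined form or argue that $\det_\phi(\sigma)^{-1}$ still defines a determinant) the element equals $(1-e_I)+\sigma^{-1}e_I = \mathrm{Nrd}_{\QQ_\ell[\Gamma]}$ of an explicit unit. The cleanest route is: $(1-e_I)+\sigma^{-1}e_I$ is the reduced norm of the element $u := (1-e_I) + \sigma^{-1}e_I$ of $\zeta(\QQ_\ell[\Gamma])^\times$ itself, but better, since $I$ is tame (prime to $\ell$), $e_I \in \ZZ_\ell[\Gamma]$, so $u\in\ZZ_\ell[\Gamma]^\times$ with inverse $(1-e_I)+\sigma e_I$, whence $\mathrm{Det}(u) = (1-\psi_{2,*})(y_{E/F})$ lies in $\mathrm{Det}(\ZZ_\ell[\Gamma]^\times)$ and $\delta_{\ZZ_\ell,\QQ_\ell,\Gamma}$ of it is $\partial^1_{\ZZ_\ell,\QQ_\ell,\Gamma}$ applied to a class coming from $K_1(\ZZ_\ell[\Gamma])$, hence $0$.

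In the totally ramified odd-degree case, $I = \Gamma$, so the Frobenius is trivial and every irreducible character of $\Gamma$ is ramified except the trivial one (since $I = \Gamma$ acts trivially only on $\eins$); then $y(F,\eins) = (-1)^1\det_{\eins}(\sigma) = -1$ — wait, here $\sigma$ can be taken to be $1$ since $\Gamma/I$ is trivial, giving $y(F,\eins) = (-1)\cdot 1 = -1$ — and for all other $\phi$, $y(F,\phi) = 1$. Actually one must be careful: when $\phi = \eins$ is unramified with $\phi(1) = 1$, $y(F,\eins) = (-1)^{1}\det_{\eins}(1) = -1$, so $y_{E/F} = -e_{\eins} + (1-e_{\eins}) = 1 - 2e_{\eins}$, and $\psi_{2,*}(y_{E/F})$ has $\eins$-component $y(F,\eins)$ again (as $\psi_2(\eins)=\eins$), so $(1-\psi_{2,*})(y_{E/F}) = 1$, the identity, whose image under $\delta$ is trivially $0$. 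I should double-check the sign bookkeeping here against the convention in Definition~\ref{Def: unrami char y}, but the upshot is that in the totally ramified case the $\psi_2$-twist is trivial on the nose. The main obstacle I anticipate is the identification in claim~(i): getting the action of $(1-e_I)+\sigma^{-1}e_I$ on isotypic components exactly right, in particular handling the non-abelian tamely ramified situation in claim~(ii) where $\det_\phi(\sigma)$ rather than a single eigenvalue appears — there one should phrase the argument so that it is really the \emph{Det}-class (not a naive eigenvalue) that is being computed, and then invoke that $e_I \in \ZZ_\ell[\Gamma]$ when $|I|$ is prime to $\ell$ to land in $\mathrm{Det}(\ZZ_\ell[\Gamma]^\times)$ and kill everything via $\partial^1$.
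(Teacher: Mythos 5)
Your overall strategy is the same as the paper's: compute the components of $(1-\psi_{2,*})(y_{E/F})$ character by character, identify the result with $(1-e_I)+\sigma^{-1}e_I$, then in the tamely ramified case observe that this element lies in $\ZZ_\ell[\Gamma]^\times$ and equals its own reduced norm, so that its image under $\delta_{\ZZ_\ell,\QQ_\ell,\Gamma}$ vanishes by the exactness of \eqref{Diagram: Kseq}; and in the totally ramified, odd degree case note that the element is $1$. The unramified components, the unit argument and the $K$-theoretic conclusion are all in order and agree with the paper.

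There is, however, a genuine gap at the ramified components, and it sits exactly where the standing hypothesis that $|I|$ is odd must be used. You assert that the $\phi$-component of $(1-\psi_{2,*})(y_{E/F})$ equals $1$ when $\phi$ is ramified; but that component is $y(F,\phi)\cdot y(F,\psi_{2}(\phi))^{-1}$, and while $y(F,\phi)=1$, the factor $y(F,\psi_{2}(\phi))$ need not be $1$ unless one shows that $\psi_{2}(\phi)$ is again ramified (more precisely, that its unramified part $n(\psi_{2}(\phi))$ in the sense of Definition~\ref{Def: unrami char}(ii) vanishes). This is precisely what oddness of $|I|$ provides: squaring is a bijection on $I$, so $\phi$ non-trivial on $I$ forces $\psi_{2}(\phi)$ non-trivial on $I$; and the hypothesis that $\Gamma$ is abelian or of odd order is what guarantees $\psi_{2}(\phi)$ is still irreducible (Proposition~\ref{Prop: adam}(iv)), so that $y(F,\psi_{2}(\phi))$ can be read off from Definition~\ref{Def: unrami char y}. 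If $|I|$ were even the claimed formula is simply false: for a linear $\phi$ whose restriction to $I$ has order $2$, $\psi_{2}(\phi)=\phi^{2}$ is unramified and the $\phi$-component equals $-\phi(\sigma)^{-2}$, not $1$. Your proposal instead attributes the odd-order hypothesis to rationality of the expression (which already follows from \eqref{unram maximal order} together with the $\Omega_{\QQ}$-equivariance of $\psi_{2}$, cf. Proposition~\ref{Prop: adam}(ii)) and attributes the abelian/odd hypothesis to linearity of the unramified characters (which is automatic, since unramified characters factor through the cyclic group $\Gamma/I$). So both hypotheses are misplaced in your write-up, and the one computation that actually needs them is left unproved. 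Once this step is supplied, the remainder of your argument -- including the caveat you correctly flag about invoking the formula of claim (i) for a non-abelian tame $\Gamma$, a point the paper itself treats only briefly -- coincides with the paper's proof.
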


\begin{proof} Assume $\Gamma$ is either abelian or of odd order. Then, in this case, if $\phi$ is an irreducible character of $\Gamma$, so is $\psi_2(\phi)$ (see Proposition~\ref{Prop: adam}(iv) for the case that $\Gamma$ has odd order, and for the case that $\Gamma$ is abelian, one has $\phi(1) = \psi_2(\phi)(1) = 1$). 
	
	In particular, if $\phi$ is both irreducible and unramified (that is, trivial on $I$), then $\phi$ is linear, and so $\psi_{2}(\phi)$ is also both linear and unramified. Thus, one has  	
	\begin{align}\label{first eq} (1 - \psi_{2, *})y(F, \phi) =&\,  y(F, \phi- \psi_{2}(\phi)) \notag \\
		=&\, y(F, \phi)y(F,\psi_{2}(\phi))^{-1}\notag\\
		=&\, ((-1)^{\phi(1)}{\rm det}_\phi(\sigma))((-1)^{\psi_2(\phi)(1)}{\rm det}_{\psi_2(\phi)}(\sigma))^{-1}\notag\\
		=&\, \phi(\sigma)(\phi(\sigma^2))^{-1}\notag\\
		=&\, \phi(\sigma)^{-1}.
	\end{align}
	The fourth equality here follows from Proposition~\ref{Prop: adam}(iii) and the fact that $\phi(1) = \psi_2(\phi)(1) = 1$. 
	
	Now we also assume that $I$ has odd order, then one has $I = \{g^2: g \in I\}$. In this case, if an irreducible character $\phi$ is ramified (that is, non-trivial on $I$), then $\psi_2(\phi)$ is also ramified, and so the unramified parts (see Definition~\ref{Def: unrami char}(ii)) $n(\phi)$ and $n(\psi_2(\phi))$ both vanish. Hence, one has    
	\begin{equation}\label{second eq} 
		(1 - \psi_{2, *})y(F,\phi) = y(F, \phi- \psi_{2}(\phi)) = 
		y(F, \phi)y(F,\psi_{2}(\phi))^{-1} = 1\cdot 1 = 1. 
	\end{equation}

	Write $\widehat{\Gamma}_R$ and $\widehat{\Gamma}_U$ for the sets of ramified and unramified irreducible characters of $\Gamma$. Then, by putting together the formulas (\ref{first eq}) and (\ref{second eq}), we can compute that 
	\begin{align*}\label{Eq: twisted unrami char y} 
		(1-\psi_{2, *})(y_{E/F}) =&\, \sum_{\phi\in \widehat{\Gamma}}y(F, \phi- \psi_{2}(\phi))e_\phi\\
		=& \,\sum_{\phi \in \widehat{\Gamma}_R}e_\phi  + \sum_{\phi \in \widehat{\Gamma}_U}\phi(\sigma)^{-1}e_\phi\\
		=& (1-e_I) + \sigma^{-1}e_I.\end{align*}
	The last equality here is true since $\sum_{\phi \in \widehat{\Gamma}_U}e_\phi = e_I$ and $\phi(\sigma)e_\phi = \sigma e_\phi$ for all $\phi \in \widehat{\Gamma}_U$. This proves claim (i). 
	
	Turning to claim (ii), we first note that if $E/F$ is tamely ramified, $|I|$ is prime to $\ell$. Then, in this case, $\ZZ_\ell[I]$ is the maximal $\ZZ_\ell$-order in $\QQ_\ell[I]$, and so $e_I \in \ZZ_\ell[I]$ and $\sigma^{-1}e_I  \in \ZZ_\ell[\Gamma]$. In addition, since all irreducible character $\phi \in \widehat{\Gamma}_U$ are linear, then \eqref{first eq} and \eqref{second eq} combine to imply that
	\[(1-e_I) + \sigma^{-1}e_I = {\rm Nrd}_{\QQ_\ell[\Gamma]} ( (1-e_I) + \sigma^{-1}e_I).\]
	Now, since $(1-e_I) + \sigma^{-1}e_I$ is a unit of $\ZZ_\ell[\Gamma]$, the formula from claim (i) can therefore be combined with the exactness of \eqref{Diagram: Kseq} to deduce that $\mathfrak{c}_{E/F} = \delta_{\ZZ_\ell,\QQ_\ell,\Gamma}( (1-e_I) + \sigma^{-1}e_I)$ vanishes in $K_0(\mathbb{Z}_{\ell}[\Gamma],\mathbb{Q}_{\ell}[\Gamma])$.
	
	Finally, if $\Gamma$ is equal to $I$ and has odd order, then $\sigma^{-1} e_I = e_I$ and so the formula in claim (i) implies that 
	\[ (1 - \psi_{2, *})(y_{E/F})= (1-e_I) + \sigma^{-1}e_I  = (1-e_I) + e_I = 1.\]
	Therefore, in this case one has $\mathfrak{c}_{E/F} = 0$. This completes the proof of claim (ii). 
\end{proof}

We now define a global analogue of the above twisted unramified characteristics. 

\begin{definition}\label{idelic twisted def} Let $L/K$ be a finite Galois extension of number fields and set $G:={\rm Gal}(L/K)$. Then, the `idelic twisted unramified characteristic' of $L/K$ is the element of 
	$K_{0}(\mathbb{Z}[G], \mathbb{Q}[G])$ that is equal to the sum  
	\[ \mathfrak{c}_{L/K}:= \sum_{v \in \mathcal{W}_{L/K}} {\rm i}^{G, *}_{G_w, \QQ_\ell}(\mathfrak{c}_{L_w/K_v}), \]
	where $\mathcal{W}_{L/K}$ denotes the (finite) set of non-Archimedean places $v$ of $K$ that ramify wildly in $L/K$, and $\ell$ denotes the residue characteristics of $v$. Here the element ${\rm i}^{G, *}_{G_w, \QQ_\ell}(\mathfrak{c}_{L_w/K_v})$ is considered as an element of $K_{0}(\mathbb{Z}[G], \mathbb{Q}[G])$ via the inclusion $K_{0}(\mathbb{Z}_\ell[G], \mathbb{Q}_\ell[G]) \subset K_{0}(\mathbb{Z}[G], \mathbb{Q}[G])$ induced by \eqref{Eq: KT iso K0} for each prime $\ell$.
\end{definition}

\begin{remark}\label{Rem: idelic c l-comp}\
	
	\noindent{}(i) The formal definition of $\mathfrak{c}_{L/K}$ given above differs from that given (for odd degree extensions) in \cite[(8.3)]{BBH} since the latter sums the terms ${\rm i}^{G, *}_{G_w, \QQ_\ell}(\mathfrak{c}_{L_w/K_v})$ over all non-Archimedean places of $v$. However, Lemma ~\ref{Prop: local c tame vanishes}(ii) implies that these definitions do, in fact, agree. 
	
	\noindent{}(ii) For each rational prime $\ell$, one has 
	\begin{equation*}\label{Eq: global unrami element}
		j_{\ell, \ast}(\mathfrak{c}_{L/K}) = \begin{cases} 0 , &\text{ if $\ell \notin \mathcal{W}_{L/K}^{\mathbb{Q}}$};\\
			\sum_{v|\ell}	{\rm i}^{G, *}_{G_w, \mathbb{Q}_\ell}	\mathfrak{c}_{L_w/K_v}, &\text{ if 
				$\ell \in \mathcal{W}_{L/K}^{\mathbb{Q}}$,}\end{cases}
	\end{equation*}
	where $\mathcal{W}_{L/K}^{\mathbb{Q}}$ denotes the set of residue characteristics of places of $K$ that ramify 
	wildly in $L/K$. 
\end{remark} 

\subsubsection{Interpretation of Conjecture~\ref{bbh conj}}
Firstly, we note that, since Conjecture~\ref{bbh conj} only considers the case that $L/K$ has odd degree, we know from Proposition \ref{frak a global independence}(ii) that the element $\mathfrak{a}_{L/K}$ introduced in Definition \ref{Def: frak a global} agrees with the corresponding element defined in \cite{BBH}.  

\begin{remark} It is known that this conjecture provides the first concrete link
between the theory of the square root of the inverse different and the general framework of
Tamagawa number conjectures that originated with Bloch and Kato in \cite{bk} (see the introduction to \cite{BBH}). This link, however, will play no significant role in the results that we present in this article. \end{remark}

The strongest theoretical evidence that Bley, Burns and Hahn offer in support of Conjecture~\ref{bbh conj} is recalled in the following result. 

\begin{prop}[{\cite[Cor. 8.4]{BBH}}]\label{Th: BBH thm}
	Let $L/K$ be a weakly ramified finite Galois extension of number fields of odd degree. 
	
	\begin{itemize}
		\item[(i)] Conjecture~\ref{bbh conj} is valid modulo the torsion subgroup ${\rm DT}(\ZZ[G])$ of $K_0(\ZZ[G],\QQ[G])$. 
		
		\item[(ii)] Conjecture~\ref{bbh conj} is valid provided that every place $v$ of $K$ that is wildly ramified in $L$ has the following three properties:
		\begin{itemize}
			\item[(a)] the decomposition subgroup in $G$ of any place of $L$ above $v$ is abelian;
			\item[(b)] the inertia subgroup in $G$ of any place of $L$ above $v$ is cyclic, and
			\item[(c)] the completion of $K$ at $v$ is absolutely unramified.
		\end{itemize}
	\end{itemize}
\end{prop}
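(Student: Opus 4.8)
The plan is to localise the identity $\mathfrak{a}_{L/K} = \mathfrak{c}_{L/K}$ via the decomposition \eqref{Eq: KT iso K0} and then to treat each local contribution by means of Taylor's Fixed Point Theorem. By Proposition~\ref{Lemma: frak a local decomp} one has $j_{\ell,*}(\mathfrak{a}_{L/K}) = \sum_{v\mid \ell}{\rm i}^{G,*}_{G_w,\QQ_\ell}(\mathfrak{a}_{L_w/K_v})$, while Definition~\ref{idelic twisted def} and Remark~\ref{Rem: idelic c l-comp}(ii) give $j_{\ell,*}(\mathfrak{c}_{L/K}) = \sum_{v\mid \ell,\, v\text{ wild}}{\rm i}^{G,*}_{G_w,\QQ_\ell}(\mathfrak{c}_{L_w/K_v})$. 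Since $L/K$ has odd degree, each $G_w$, and hence its inertia subgroup, has odd order, so Lemma~\ref{Prop: local c tame vanishes}(ii) shows that $\mathfrak{c}_{L_w/K_v} = 0$ at every at most tamely ramified place $v$. As the induction maps ${\rm i}^{G,*}_{G_w,\QQ_\ell}$ are additive, it therefore suffices to prove, for each finite place $v$ of $K$ of residue characteristic $\ell$ and with $\Gamma := G_w$, the purely local identity $\mathfrak{a}_{L_w/K_v} = \mathfrak{c}_{L_w/K_v}$ in $K_0(\ZZ_\ell[\Gamma],\QQ_\ell[\Gamma])$ --- outright in case (ii), under the (local) hypotheses (a)--(c) imposed on $L_w/K_v$, and modulo the relevant torsion in case (i).

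Next, since both $\mathfrak{a}_{L_w/K_v}$ (by Proposition~\ref{Prop: local in Q}(i)) and $\mathfrak{c}_{L_w/K_v}$ (by Definition~\ref{Def: loc c}) lie in $K_0(\ZZ_\ell[\Gamma],\QQ_\ell[\Gamma])$, on which the composite $K_0(\ZZ_\ell[\Gamma],\QQ_\ell[\Gamma])\hookrightarrow K_0(\ZZ_\ell[\Gamma],\QQ_\ell^c[\Gamma])\xrightarrow{j^t_{\ell,*}}K_0(\mathcal{O}_\ell^t[\Gamma],\QQ_\ell^c[\Gamma])$ is injective by Taylor's Fixed Point Theorem, it is enough to verify the identity after applying $j^t_{\ell,*}$. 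Here the unramified term $U_{L_w/K_v}$ is annihilated, so by Remark~\ref{frak a' def} one compares the images of $\mathfrak{a}'_{L_w/K_v} = \Delta(\mathcal{A}_{L_w/K_v}) - \delta_{\ZZ_\ell,\QQ_\ell^c,\Gamma}(j^c_\ell(T^{(2)}_{L_w/K_v}))$ and of $\mathfrak{c}_{L_w/K_v}$. Fixing a generator $a$ with $\mathcal{A}_{L_w/K_v} = \mathcal{O}_{K_v}[\Gamma]\cdot a$ (which exists by Proposition~\ref{Lemma: weakly rami proj iff}(ii)) and using the norm-resolvent description of $\Delta(\mathcal{A}_{L_w/K_v})$ in Proposition~\ref{Prop: local in Q}(ii), the relation between $T^{(2)}_{L_w/K_v}$, $\tau^\dagger_{L_w/K_v}$, $J_{2,L_w/K_v}$ and $y_{L_w/K_v}$ in Remark~\ref{Rem: loc JG sums def}(ii), the fact (recalled in \S\ref{S: taylor fixed point}) that the kernel of the composite \eqref{Eq: Taylor kernel Det} equals ${\rm Det}(\mathcal{O}_\ell^t[\Gamma]^\times)$, and Proposition~\ref{Prop: local in Q}(iii), the desired equality becomes an explicit congruence, modulo ${\rm Det}(\mathcal{O}_\ell^t[\Gamma]^\times)$, between a norm-resolvent expression in $a$, twisted Galois-Gauss-sum data, and --- via Lemma~\ref{Prop: local c tame vanishes}(i) --- the unramified-characteristic element $(1-e_I)+\sigma^{-1}e_I$, where $I = \Gamma_0$ and $\sigma$ is a Frobenius lift.

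The argument now splits according to the ramification of $v$. If $v$ is at most tamely ramified then $\mathfrak{c}_{L_w/K_v} = 0$, the module $\mathcal{A}_{L_w/K_v}$ is $\mathcal{O}_{K_v}[\Gamma]$-free, and the congruence above is the local incarnation of the Erez--Taylor comparison between the square root of the inverse different and (twisted) Galois-Gauss sums, from which I would conclude $j^t_{\ell,*}(\mathfrak{a}_{L_w/K_v}) = 0$ via the analysis in \cite{E} and \cite{ET}. If instead $v$ is wildly ramified and (a)--(c) hold, then $\Gamma$ is abelian, $I$ is cyclic and $K_v$ is absolutely unramified --- precisely the situation in which Pickett--Vinatier \cite{PV} and Bley--Cobbe \cite{BC} produce an explicit generator $a$ of $\mathcal{A}_{L_w/K_v}$ over $\mathcal{O}_{K_v}[\Gamma]$ whose norm resolvents are given in closed form by Galois-Gauss sums; substituting their formula and accounting for the $\psi_2$-twist (via Proposition~\ref{Prop: adam}) should produce exactly $(1-e_I)+\sigma^{-1}e_I$ modulo ${\rm Det}(\mathcal{O}_\ell^t[\Gamma]^\times)$, hence the local equality and so claim (ii). For claim (i) no local hypotheses are available, and one needs only that $\mathfrak{a}_{L/K} - \mathfrak{c}_{L/K}$ lie in ${\rm DT}(\ZZ[G])$: here I would argue, as in the proof that $\mathfrak{a}_{L/K}$ is torsion (\cite[Th. 5.2]{BBH}), that both $\mathfrak{a}_{L/K}$ and $\mathfrak{c}_{L/K}$ lie in $K_0(\ZZ[G],\QQ[G])_{\mathrm{tor}}$ --- using Proposition~\ref{frak a global independence}(iv) (with $W^{(2)}_{L/K} = 0$ since $|G|$ is odd) and the functional-equation input encoded in Proposition~\ref{Prop: JG decomp} --- and then that the residual discrepancy is absorbed into ${\rm DT}(\ZZ[G])$.

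The main obstacle is the wild-place computation underlying claim (ii): unwinding $j^t_{\ell,*}(\mathfrak{a}'_{L_w/K_v})$ into an explicit class in $\zeta(\QQ_\ell^c[\Gamma])^\times/{\rm Det}(\mathcal{O}_\ell^t[\Gamma]^\times)$ and matching it with $(1-e_I)+\sigma^{-1}e_I$ requires an explicit self-dual normal integral basis for $\mathcal{A}_{L_w/K_v}$ --- which is exactly what forces hypotheses (a)--(c), since such generators and the attendant resolvent formulae are available only in those cases --- together with delicate control of the second Adams operator, which need not commute with induction; matching the $\psi_2$-twisted Galois-Gauss sum against $\tau^\dagger$, $y$ and $J_2$ is the crux. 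For claim (i) the corresponding subtlety is instead pinning down ${\rm DT}(\ZZ[G])$ and locating the a priori only-torsion difference inside it.
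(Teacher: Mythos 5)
Your treatment of claim (ii) follows essentially the same route as the paper (and as the original argument of \cite{BBH} that the paper recalls and then generalises): project via \eqref{Eq: KT iso K0}, use Proposition~\ref{Lemma: frak a local decomp} together with Definition~\ref{idelic twisted def} and Remark~\ref{Rem: idelic c l-comp}(ii) to reduce to a local identity $\mathfrak{a}_{L_w/K_v}=\mathfrak{c}_{L_w/K_v}$, pass through Taylor's Fixed Point Theorem and Remark~\ref{frak a' def}, dispose of the tame places by the Erez--Fr\"ohlich--Taylor resolvent versus (twisted) Galois-Gauss sum comparison (in the odd-degree setting this is exactly Erez's argument, recast in the paper as Theorem~\ref{Th: tame local main} via Lemma~\ref{Lem: tame local norm main}), and handle the wild places satisfying (a)--(c) by the Pickett--Vinatier/Bley--Cobbe computation (Lemma~\ref{Lemma: local weakly rami main}, i.e. the local result of \cite{BBH}). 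The wild-place resolvent computation is only gestured at (``should produce exactly $(1-e_I)+\sigma^{-1}e_I$''), but you invoke precisely the inputs \cite{PV}, \cite{BC} on which the paper itself relies, so claim (ii) is in order as a sketch.

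Claim (i), however, has a genuine gap. You argue that both $\mathfrak{a}_{L/K}$ and $\mathfrak{c}_{L/K}$ lie in $K_0(\ZZ[G],\QQ[G])_{\rm tor}$ and that ``the residual discrepancy is absorbed into ${\rm DT}(\ZZ[G])$''; but ${\rm DT}(\ZZ[G])$ is in general a proper subgroup of the torsion subgroup, so finiteness of order does not place the difference $\mathfrak{a}_{L/K}-\mathfrak{c}_{L/K}$ (equivalently, since $\mathfrak{c}_{L/K}$ does lie in ${\rm DT}(\ZZ[G])$, the element $\mathfrak{a}_{L/K}$ itself) inside ${\rm DT}(\ZZ[G])$ --- that membership is exactly what has to be proved, and it is strictly stronger than the torsion statement of \cite[Th. 5.2]{BBH} that you appeal to. As Remark~\ref{BBH interpret remark}(i) records, the actual proof proceeds by a reduction to tamely ramified extensions, resting on the functorial behaviour of $\mathfrak{a}_{L/K}$ and $\mathfrak{c}_{L/K}$ under change of extension (\cite[Th. 6.1 and Rem. 8.9]{BBH}): working modulo ${\rm DT}(\ZZ[G])$ one may replace $L/K$ by a suitable auxiliary (tame) situation where the equality is known. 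None of this mechanism appears in your proposal; you yourself flag ``locating the a priori only-torsion difference inside ${\rm DT}(\ZZ[G])$'' as the difficulty but offer no argument for it, so claim (i) is not established.
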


\begin{remark}\label{BBH interpret remark}\
	
\noindent{}(i) Since $\mathfrak{c}_{L/K}$ belongs to $K_0(\ZZ[G],\QQ[G])$ and has finite order, the meaning of claim (i) of Proposition \ref{Th: BBH thm} is that $\mathfrak{a}_{L/K}$ belongs to ${\rm DT}(\ZZ[G])$. The result is proved by a reduction to tamely ramified extensions (see \cite[\S8B1]{BBH} for its proof) that depends on the functorial properties of the elements $\mathfrak{a}_{L/K}$ and $\mathfrak{c}_{L/K}$ (see \cite[Th. 6.1 and Rem. 8.9]{BBH}). 
	
\noindent{}(ii) Claim (ii) of Proposition \ref{Th: BBH thm} is proved by adapting methods developed by Pickett and Vinatier in \cite{PV} (which we will follow suit for the proof Lemma~\ref{Lemma: local weakly rami main} in \S\ref{S: weak rami K_0 result}).  
	
\noindent{}(iii) Aside from the result of Proposition \ref{Th: BBH thm}, the only other evidence in support of Conjecture~\ref{bbh conj} are (1) the numerical computations described in \cite[\S10]{BBH} that verify the conjecture for all Galois extensions of $\QQ$ of degree $27$ (in Theorem 10.2) and for a certain family of Galois extensions of $\QQ$ of degree $63$ (in Theorem 10.5); (2) the computation by the present author in \cite{Ku} of explicit upper bounds on the order of the elements $ \mathfrak{a}_{L/K}$ and $\mathfrak{c}_{L/K}$ in the case that $G$ is a $p$-group and $p$ is unramified in $K$ where $p$ is an odd prime.
\end{remark}

\section{Results in the relative $K$-group}
In this section, we prove Theorem~\ref{last intro thm}(i), which extends Proposition~\ref{Th: BBH thm} to an extension of arbitrary degree that the square root of the inverse different exists and that no wildly ramified place is 2-adic. 


%
%

In view of the decomposition result of $\mathfrak{a}_{L/K}$ in Proposition~\ref{Lemma: frak a local decomp} and Definition~\ref{idelic twisted def} of $\mathfrak{c}_{L/K}$, it is enough for us to show that the stated equality holds for each place $v$ of $K$ that ramifies in $L$ (and satisfies the stated hypotheses).


\subsection{Tamely ramified extensions}
In this section, we will show that, for a tamely ramified extension of local fields, the canonical relative element defined in \S~\ref{def local element} vanishes in the relative $K$-group whenever it exists.

\subsubsection{Statement of the results}
\begin{thm}\label{Th: tame local main}
	Let $E/F$ be a finite Galois extension of $p$-adic fields and set $\Gamma := {\rm Gal}(E/F)$. If $E/F$ is tamely ramified and $\mathcal{A}_{E/F}$ exists, then in $K_0(\ZZ_p[\Gamma],\QQ_p[\Gamma])$, one has $\mathfrak{a}_{E/F} =\mathfrak{c}_{E/F}=0.$
\end{thm}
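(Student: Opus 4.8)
The plan is to prove the two vanishings $\mathfrak{c}_{E/F}=0$ and $\mathfrak{a}_{E/F}=0$ separately, the equality $\mathfrak{a}_{E/F}=\mathfrak{c}_{E/F}$ then being immediate. The vanishing of $\mathfrak{c}_{E/F}$ is quick: since $E/F$ is tamely ramified and $\mathcal{A}_{E/F}$ exists, Remark \ref{Cor: odd inertia} shows that the inertia subgroup $\Gamma_0$ has odd order, and so the tamely ramified case of Lemma \ref{Prop: local c tame vanishes}(ii) applies directly.

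For $\mathfrak{a}_{E/F}=0$ I would first pass to the maximal tame extension of $\mathbb{Q}_p$. By Proposition \ref{Prop: local in Q}(i) the element $\mathfrak{a}_{E/F}$ lies in $K_0(\mathbb{Z}_p[\Gamma],\mathbb{Q}_p[\Gamma])$, so Taylor's Fixed Point Theorem (\S\ref{S: taylor fixed point}) reduces the problem to showing that $j^t_{p,*}(\mathfrak{a}_{E/F})=0$ in $K_0(\mathcal{O}_p^t[\Gamma],\mathbb{Q}_p^c[\Gamma])$; by Remark \ref{frak a' def} this image equals $j^t_{p,*}(\mathfrak{a}'_{E/F})$ with $\mathfrak{a}'_{E/F}=\Delta(\mathcal{A}_{E/F})-\delta_{\mathbb{Z}_p,\mathbb{Q}_p^c,\Gamma}(j^c_p(T^{(2)}_{E/F}))$. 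Tameness forces weak ramification, so by Proposition \ref{Lemma: weakly rami proj iff} we may fix a generator $a$ of the free $\mathcal{O}_F[\Gamma]$-module $\mathcal{A}_{E/F}$; Breuning's formula (Proposition \ref{Prop: local in Q}(ii)) then gives $\Delta(\mathcal{A}_{E/F})=\delta_{\mathbb{Z}_p,\mathbb{Q}_p^c,\Gamma}(\rho)$, where $\rho:=\sum_{\chi\in\widehat{\Gamma}}\bigl(\delta_F^{\chi(1)}\mathcal{N}_{F/\mathbb{Q}_p}(a|\chi)\bigr)e_\chi\in\zeta(\mathbb{Q}_p^c[\Gamma])^\times$. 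Hence $\mathfrak{a}'_{E/F}=\delta_{\mathbb{Z}_p,\mathbb{Q}_p^c,\Gamma}\bigl(\rho\cdot j^c_p(T^{(2)}_{E/F})^{-1}\bigr)$, and by functoriality of the connecting map $\partial^1$ under $\mathbb{Z}_p\hookrightarrow\mathcal{O}_p^t$ together with the kernel description recalled in \S\ref{S: taylor fixed point}, the whole assertion reduces to proving that $\rho\cdot j^c_p(T^{(2)}_{E/F})^{-1}$ belongs to $\mathrm{Det}(\mathcal{O}_p^t[\Gamma]^\times)$.

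To verify this last membership I would unwind $T^{(2)}_{E/F}$ using the definitions in \S\ref{S: modified GGS} and Remark \ref{Rem: loc JG sums def}, expressing it in terms of $\tau^{\Gamma}_F$, the modified Galois-Gauss sum $\tau'_{E/F}=\tau_{E/F}y_{E/F}^{-1}$, and the second Adams twist. Three inputs then peel off all the inessential factors: Proposition \ref{Prop: local in Q}(iii), which through $\delta_F/j^c_p(\tau_F)\in(\mathcal{O}_p^t)^\times$ trades the factors $\delta_F^{\chi(1)}$ for the components $\tau_F^{\chi(1)}$ of $\tau^{\Gamma}_F$ up to a determinant of an $\mathcal{O}_p^t[\Gamma]$-unit; Lemma \ref{Prop: local c tame vanishes}(i), which identifies $(1-\psi_{2,*})(y_{E/F})$ with the $\mathcal{O}_p^t[\Gamma]$-unit $(1-e_I)+\sigma^{-1}e_I$ (with $I=\Gamma_0$ and $\sigma$ a Frobenius lift), so that $y_{E/F}$ again contributes only a unit determinant; and the theorem of Cassou-Nogu\`es and Taylor \cite{Nog_Tay_85} --- applicable since $\mathcal{O}_p^t/\mathbb{Z}_p$ is unramified --- which shows $\mathrm{Det}(\mathcal{O}_p^t[\Gamma]^\times)$ is stable under $\psi_2$ and hence controls the passage between $\psi_2$-twisted and untwisted Galois-Gauss sums. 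What then remains is precisely the local resolvent identity of Erez and Taylor \cite{ET}: for a tamely ramified extension, the norm resolvent $\mathcal{N}_{F/\mathbb{Q}_p}(a|\chi)$ of a generator of the square root of the inverse different agrees, modulo $\mathrm{Det}(\mathcal{O}_p^t[\Gamma]^\times)$, with the second Adams-twisted modified Galois-Gauss sum of $\chi$ --- the identity that underlies the odd-degree case of \cite{BBH}. Its proof descends to the inertia subgroup $\Gamma_0$, which has odd order by Remark \ref{Cor: odd inertia}, so $\psi_2$ commutes with induction from $\Gamma_0$ by Proposition \ref{Prop: adam}(iv) and the argument of loc. cit. goes through irrespective of the parity of $|\Gamma/\Gamma_0|$.

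The main obstacle is the content of the previous paragraph: establishing the Erez--Taylor resolvent congruence in the normalisation used here, and carrying out the elementary but somewhat delicate bookkeeping of the correction terms $\delta_F$, $\tau^{\Gamma}_F$, $y_{E/F}$ and the $\psi_2$-twist so that $\rho\cdot j^c_p(T^{(2)}_{E/F})^{-1}$ becomes manifestly a determinant of a unit in $\mathcal{O}_p^t[\Gamma]$. The reduction steps --- Taylor's Fixed Point Theorem, Breuning's explicit formula, and the vanishing of $\mathfrak{c}_{E/F}$ --- are by contrast routine.
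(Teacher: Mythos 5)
Your reduction chain coincides with the paper's: the vanishing of $\mathfrak{c}_{E/F}$ via Remark \ref{Cor: odd inertia} and Lemma \ref{Prop: local c tame vanishes}(ii), the passage to $\mathfrak{a}'_{E/F}$ and to $K_0(\mathcal{O}_p^t[\Gamma],\QQ_p^c[\Gamma])$ via Taylor's Fixed Point Theorem, Breuning's explicit formula, Proposition \ref{Prop: local in Q}(iii), and the final reduction to a membership in ${\rm Det}(\mathcal{O}_p^t[\Gamma]^\times)$ are exactly as in the text. The genuine gap lies in how you justify the remaining congruence between the norm resolvent $\mathcal{N}_{F/\QQ_p}(a|\chi)$ and the $\psi_2$-twisted modified Galois-Gauss sums. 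You assert that the proof ``descends to the inertia subgroup $\Gamma_0$'', which has odd order, ``so $\psi_2$ commutes with induction from $\Gamma_0$ by Proposition \ref{Prop: adam}(iv)''. That proposition requires $k$ to be prime to the order of the \emph{whole} group $\Gamma$, not of the subgroup, and when $|\Gamma|$ is even $\psi_2$ genuinely fails to commute with induction from odd-order subgroups: already $\psi_2\bigl(\mathrm{ind}_{\{1\}}^{C_2}\mathbf{1}\bigr)=2\cdot\mathbf{1}$ while $\mathrm{ind}_{\{1\}}^{C_2}\bigl(\psi_2(\mathbf{1})\bigr)$ is the regular character (cf.\ Remark \ref{Rem: adam commute ind coprime}). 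This non-commutation is precisely the obstruction that makes the even-degree case delicate, so it cannot carry the decisive step.

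The repair, and the route the paper actually takes (Lemma \ref{Lem: tame local norm main} and Proposition \ref{Prop: local ideal}), is to descend by \emph{restriction} rather than induction: $\psi_2$ always commutes with restriction (Proposition \ref{Prop: adam}(ii)), and Fr\"ohlich's norm relations give $(\tau(B,\mathrm{res}\,\chi))=(\tau(F,\chi))^{[B:F]}$, $\mathcal{N}_{B/\QQ_p}P(\mathrm{res}\,\chi)=\mathcal{N}_{F/\QQ_p}P(\chi)^{[B:F]}$ and a comparison of generators $(b|\mathrm{res}\,\chi)_{E/B}=(a|\chi)_{E/F}\,{\rm Det}_\chi(\lambda)$ with $\lambda\in\mathcal{O}_B[\Gamma]^\times$, where $B=E^{\Gamma_0}$; Erez's odd-degree theorem applied to the totally ramified odd extension $E/B$ then yields the ideal identity $\mathcal{N}_{F/\QQ_p}P(\chi)=(j_p^c(\tau(F,\psi_2(\chi^{j^{-1}})-\chi^{j^{-1}})))$, and Fr\"ohlich's Hom-description argument (his Theorem 31, run with the modified sums $\tau^*$) upgrades this to the congruence modulo ${\rm Det}(\mathcal{O}_p^t[\Gamma]^\times)$, independently of the parity of $|\Gamma/\Gamma_0|$. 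Two smaller caveats: your use of Lemma \ref{Prop: local c tame vanishes}(i) to peel off the $y$-factor is outside that lemma's hypotheses ($\Gamma$ abelian or of odd order), since a tame local group can be nonabelian of even order, so you should argue as in the tame part of the proof of that lemma (unramified irreducibles are linear because $\Gamma/\Gamma_0$ is cyclic) or simply keep $\tau'_{E/F}$ intact as the paper does; and the Cassou-Nogu\`es--Taylor theorem, while true in this setting, plays no role at this point of the paper's proof and cannot by itself relate twisted Gauss sums to resolvents.
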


The second of the required equalities follows directly from the fact that, under the hypotheses, $\mathfrak{c}_{E/F}$ vanishes (see Remark~\ref{Cor: odd inertia} and Lemma~\ref{Prop: local c tame vanishes}).

For the next result, we fix an embedding $j^c_{p}:\mathbb{Q}^{c}\rightarrow \mathbb{Q}^{c}_{p}$. We also write $j_p^c$ for induced embedding $\zeta(\QQ^c[G])^\times \rightarrow \zeta(\QQ^c_p[G])^\times$. We let $\mathbb{Q}_{p}^{t}$ denote the maximal tamely ramified extension of $\mathbb{Q}_{p}$ and $\mathcal{O}_{p}^{t}$ denote the valuation ring of $\mathbb{Q}_{p}^{t}$. 

Recall the notations $\widehat{\Gamma}$ ($R_{\Gamma}$) and $\widehat{\Gamma}_p$ ($R_{\Gamma , p}$) from \S\ref{S: notation}. We note that $j^c_p$ also gives rise to a bijection $R_\Gamma \rightarrow R_{\Gamma, p}$ by sending $\chi$ to the function $\chi^{j}$ that is defined by setting $\chi^{j}(g) := {j_p^c}(\chi(g))$ for all $g\in \Gamma$. In this way, for each $\phi \in R_{\Gamma, p}$, with $\phi = \chi^{j}$ for some $\chi \in R_\Gamma$, the virtual character $\chi$ is equal to the function $\phi^{j^{-1}}$ that is defined by setting $\phi^{j^{-1}} (g) := j_\ell^{-1} (\phi (g))$ and hence, we write
\[{j_p^c(\tau_{E/F}')}=\sum_{\phi \in \widehat{\Gamma}_p} e_\phi \cdot (j_p^c(\tau'(F, \phi^{j^{-1}}))) \in \zeta(\mathbb{Q}_p^c[\Gamma])^{\times}. \]
Here $\tau'_{E/F}$ is the local analogy to the element defined in Definition~\ref{Def: equiv global GGS}.

The following result extends the result \cite[Lem. 8.4]{E} of Erez for tamely ramified extensions of odd degree, which uses results of Fröhlich and Taylor (see \cite[Chap. III \& IV]{F83}) for Fröhlich's conjecture. We shall follow their steps in proving it.
\begin{lemma}\label{Lem: tame local norm main}
	Fix an element $a \in E$ such that $\mathcal{A}_{E/F}=\mathcal{O}_{F}[\Gamma] \cdot a$, then there exists an element $u\in \mathcal{O}_{p}^{t}[\Gamma]^{\times}$ such that, for all  $\chi \in \widehat{\Gamma}_p$, 
	\begin{equation}\label{Eq: tame local norm main eq}
		\frac{\mathcal{N}_{F/\mathbb{Q}_{p}}(a|\chi) }{j_{p}^c(\tau'(F, \psi_{2}(\chi^{j^{-1}})-\chi^{j^{-1}}))}=\mathrm{Det_{\chi}}(u).
	\end{equation}
\end{lemma}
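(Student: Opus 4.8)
The plan is to follow the strategy used by Erez in \cite[\S8]{E}, which is itself modelled on the proof by Fr\"ohlich and Taylor of Fr\"ohlich's conjecture (cf. \cite[Chap. III \& IV]{F83}), and to observe that the extra difficulty caused by allowing $\Gamma$ to have even order is controlled by the fact, recorded in Remark~\ref{Cor: odd inertia}(i), that the present hypotheses force the inertia subgroup $I := \Gamma_0$ of $\Gamma$ to have odd order. Consequently, by Proposition~\ref{Prop: adam}(iv), $\psi_2$ permutes the irreducible characters of $I$, and, by Proposition~\ref{Prop: adam}(ii), it commutes with $\mathrm{res}^{\Gamma}_{I}$; these are the only properties of $\psi_2$ that the argument needs.

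I would first reduce to a convenient choice of $a$: since $\mathcal{A}_{E/F}$ is a free $\mathcal{O}_F[\Gamma]$-module (it is projective by Proposition~\ref{Lemma: weakly rami proj iff} and $\mathcal{O}_F[\Gamma]$ is semilocal), any two of its $\mathcal{O}_F[\Gamma]$-generators differ by a unit of $\mathcal{O}_F[\Gamma]$, and changing $a$ by such a unit multiplies the left-hand side of \eqref{Eq: tame local norm main eq} by an element of $\mathrm{Det}(\mathcal{O}_F[\Gamma]^{\times}) \subseteq \mathrm{Det}(\mathcal{O}_p^t[\Gamma]^{\times})$; so it suffices to treat one generator, for which I would take the explicit element built, as in \cite{E}, from a uniformizer of $E$ and a normal integral basis generator of the maximal unramified subextension of $E/F$, whose valuation exponents are symmetric about $\mathrm{ord}_E(\mathcal{A}_{E/F}) = -(|I|-1)/2$ (an integer, since $|I|$ is odd). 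The identity \eqref{Eq: tame local norm main eq} is then verified by a direct computation of the norm resolvent of this generator. For $\chi$ unramified, $\psi_2(\chi)-\chi$ is again unramified, so the right-hand side of \eqref{Eq: tame local norm main eq} is a unit of $\mathcal{O}_p^t$ (the modified Galois-Gauss sum $\tau'$ has stripped off the unramified characteristic), while the left-hand side is a unit by the classical analysis of resolvents attached to unramified characters; for general $\chi$ one uses the inductivity in degree zero of norm resolvents and of Galois-Gauss sums, together with the fact that $\tau'$ depends only on the ramified — hence inertial — part of a character, to reduce the quotient in \eqref{Eq: tame local norm main eq} to the totally tamely ramified case and a character of $I$, where $\psi_2$ is well-behaved and Erez's explicit Gauss-sum computation identifies the resolvent of the chosen generator with $\tau'(F,\psi_2\phi-\phi)$ up to units of $\mathcal{O}_p^t$ (equivalently: the change-of-generator element relating $\mathcal{A}_{E/F}$ to $\mathcal{O}_E$ realises, modulo units, the modified local Galois-Jacobi sum). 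Assembling the two cases shows that $\chi \mapsto \mathcal{N}_{F/\mathbb{Q}_p}(a|\chi)/j_p^c(\tau'(F,\psi_2\chi-\chi))$ defines a homomorphism $R_{\Gamma,p}\to(\mathcal{O}_p^t)^{\times}$ that is equivariant under $\mathrm{Gal}(\mathbb{Q}_p^t/\mathbb{Q}_p)$ — using the Galois action of local Galois-Gauss sums \cite[p. 42, Th. 5.1]{M} and the evident equivariance of norm resolvents — and that satisfies the remaining (abelian) conditions of Fr\"ohlich's Hom-description; by that description (cf. \cite[Chap. I, \S2 and Chap. II, \S2]{F83} and the kernel computation recalled in \S\ref{S: taylor fixed point}) it then lies in $\mathrm{Det}(\mathcal{O}_p^t[\Gamma]^{\times})$, i.e. equals $\mathrm{Det}_{-}(u)$ for a single $u \in \mathcal{O}_p^t[\Gamma]^{\times}$, which is the assertion.

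The main obstacle is the reduction to the inertia group in the general case. Because $\psi_2$ need not commute with induction when $|\Gamma|$ is even, one cannot simply transport Erez's odd-degree identity along induction; one must instead check that the particular combination $\psi_2(\chi)-\chi$ together with the \emph{modified} Galois-Gauss sum $\tau'$ (which, unlike $\tau$, only involves the ramified part of the character) is functorial enough that the quotient in \eqref{Eq: tame local norm main eq} does descend to $I$, where $|I|$ odd makes $\psi_2$ harmless. Verifying this compatibility, and confirming that the resulting unit-valued homomorphism genuinely satisfies all of Fr\"ohlich's conditions for membership in $\mathrm{Det}(\mathcal{O}_p^t[\Gamma]^{\times})$ rather than merely being Galois-equivariant, is the technical core of the proof.
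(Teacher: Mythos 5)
Your overall strategy---Erez's adaptation of the Fr\"ohlich--Taylor argument, with the odd order of the inertia group $I=\Gamma_0$ (Remark~\ref{Cor: odd inertia}(i)) and the compatibility of $\psi_2$ with restriction (Proposition~\ref{Prop: adam}(ii)) as the crucial facts---is indeed the one the paper follows, which proves the lemma by repeating the proof of \cite[Ch. IV, \S1, Th. 31]{F83} with Theorems 23 and 25 of loc.\ cit.\ replaced by Proposition~\ref{Prop: local ideal}(ii)(c) and (ii)(a). But your sketch has a genuine gap at exactly the point you yourself flag as ``the technical core'': the reduction from $\Gamma$ to the inertia group. You propose to effect it by ``inductivity in degree zero of norm resolvents and of Galois-Gauss sums'', but induction from a subgroup is the wrong functoriality here (and, as you note, $\psi_2$ does not commute with it). What the argument actually requires is the comparison along the unramified base change $F\subseteq B:=E^{\Delta}$, $\Delta=\Gamma_0$: at the level of resolvents one needs $(b|\mathrm{res}\,\chi)_{E/B}=(a|\chi)_{E/F}\cdot\mathrm{Det}_{\chi}(\lambda)$ with $\lambda\in\mathcal{O}_{B}[\Gamma]^{\times}$, which rests on extending Fr\"ohlich's $\mathrm{Map}(\Gamma,E)^{\Delta}$ construction from $\mathcal{O}_{E}$ to $\mathcal{A}_{E/F}$ (one must show the associated map $h_b$ is a free $\mathcal{O}_{B}[\Gamma]$-generator of $\mathrm{Map}(\Gamma,\mathcal{A}_{E/F})^{\Delta}$); and at the level of Gauss sums one only obtains ideal-theoretic relations carrying an exponent, $(\tau(B,\mathrm{res}\,\chi))=(\tau(F,\chi))^{[B:F]}$ and $\mathcal{N}_{B/\mathbb{Q}_{p}}P(\mathrm{res}\,\chi)=\mathcal{N}_{F/\mathbb{Q}_{p}}P(\chi)^{[B:F]}$, which are combined with Erez's result \cite[Th. 5.2]{E} for the totally ramified, odd-degree extension $E/B$ (where $\psi_2$ commutes with $\mathrm{res}^{\Gamma}_{\Delta}$) to yield the ideal identity $\mathcal{N}_{F/\mathbb{Q}_{p}}P(\chi)=(j_{p}^{c}(\tau(F,\psi_{2}(\chi^{j^{-1}})-\chi^{j^{-1}})))$. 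This is precisely Proposition~\ref{Prop: local ideal}(ii), and you neither supply it nor a substitute; without it the proposed ``direct computation of the norm resolvent of an explicit generator'' does not get off the ground for ramified characters of a non-totally-ramified $E/F$ (and is in any case not needed, since the argument is generator-independent at the level of ideals).

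A second, smaller gap is the concluding step: you assert that a unit-valued, Galois-equivariant function on $R_{\Gamma,p}$ satisfying unspecified ``abelian conditions'' of the Hom-description must lie in $\mathrm{Det}(\mathcal{O}_{p}^{t}[\Gamma]^{\times})$. That is not automatic. In the paper (following Fr\"ohlich's proof of Theorem 31) the Det-membership is extracted from the ideal identity above together with the \emph{modified} Gauss sum $\tau^{*}(F,\phi)=\tau(F,\phi)y(F,\phi)^{-1}z(F,\phi)$ rather than $\tau'$, the observation that $z(F,\psi_{2}(\phi)-\phi)=\det_{\phi}(\gamma')$ is itself a determinantal value (via Proposition~\ref{Prop: adam}(iii)), and the kernel description recalled in \eqref{Eq: Taylor kernel Det}; your unramified/ramified case split is also not how that argument runs, though this by itself would not be fatal.
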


Before we prove this result, we shall first show that Lemma~\ref{Lem: tame local norm main} implies Theorem~\ref{Th: tame local main}.

By Taylor's fixed point theorem (see \S\ref{S: taylor fixed point}), in order to prove Theorem~\ref{Th: tame local main}, it is enough for us to show that the image in $K_0(\mathcal{O}_p^t[\Gamma], \mathbb{Q}^c_p[\Gamma])$ of $\mathfrak{a}_{E/F}$ vanishes. Taking account of Remark~\ref{frak a' def}, it is therefore sufficient for us to show that the image in $K_0(\mathcal{O}_p^t[\Gamma], \mathbb{Q}^c_p[\Gamma])$ of $\mathfrak{a}'_{E/F}$ vanishes . 

%
%

To do this, we fix an element $a$ of $E$ such that $\mathcal{A}_{E/F}= \mathcal{O}_{F}[\Gamma]\cdot a$, and set $\delta_{\Gamma, p} := \delta_{\ZZ_p,\QQ_p^c,\Gamma}$ and $v:=\delta_{F}/ j_p^c(\tau_{F})$. Then, by the explicit formula in Proposition~\ref{Prop: local in Q}(ii), one has
\begin{align*}
	\mathfrak{a}'_{E/F} =&\, \delta_{\Gamma, p} \left((j_p^c(T_{E/F}^{(2)} ))^{-1} \cdot \sum_{\chi \in \widehat{\Gamma}_p} (\delta_{F}^{\chi(1)} \cdot \mathcal{N}_{F/\mathbb{Q}_{p}}(a|\chi))e_{\chi}   \right)\\
	=&\, \delta_{\Gamma, p} \left(\sum_{\chi \in \widehat{\Gamma}_p} \left( (\delta_{F}/ j_p^c(\tau_{F}))^{\chi(1)} \cdot \frac{\mathcal{N}_{F/\mathbb{Q}_{p}}(a|\chi) }{j_{p}^c(\tau'(F, \psi_{2}(\chi^{j^{-1}})-\chi^{j^{-1}}))}  \right) e_{\chi}  \right)\\
	=&\, \delta_{\Gamma, p}\bigr(x_1\bigr) + \delta_{\Gamma, p} \left(\sum_{\chi \in \widehat{\Gamma}_p} \frac{\mathcal{N}_{F/\mathbb{Q}_{p}}(a|\chi) }{j_{p}^c(\tau'(F, \psi_{2}(\chi^{j^{-1}})-\chi^{j^{-1}}))} e_{\chi}  \right) ,
\end{align*}
%
with $x_1:= \sum_{\chi \in \widehat{\Gamma}_p} v^{\chi(1)} e_\chi = {\rm Nrd}_{\QQ_p^c[\Gamma]}(v)$. Since $v$ is a unit of $\mathcal{O}_p^t$ (see Proposition~\ref{Prop: local in Q}(iii)), the image in $K_0(\mathcal{O}_p^t[\Gamma], \mathbb{Q}^c_p[\Gamma])$ of the element $\delta_{\Gamma, p}(x_1)$ vanishes.

Now, assuming Lemma~\ref{Lem: tame local norm main}, we have $x_2= \sum_{\chi \in \widehat{\Gamma}_p}  x_{2, \chi} e_\chi$ such that, for all $\chi \in \widehat{\Gamma}_p$,
\[ x_{2, \chi}:= \frac{\mathcal{N}_{F/\mathbb{Q}_{p}}(a|\chi) }{j_{p}^c(\tau'(F, \psi_{2}(\chi^{j^{-1}})-\chi^{j^{-1}}))}= {\rm Det}_{\chi}(u) ,\]
where $u$ belongs to $\mathcal{O}_p^t[\Gamma]^\times$. Therefore the image in $K_0(\mathcal{O}_p^t[\Gamma], \mathbb{Q}^c_p[\Gamma])$ of the element $\delta_{\Gamma, p}(x_2)$ vanishes as a consequence of the fact that the kernel of the composite homomorphism \eqref{Eq: Taylor kernel Det} is ${\rm Det}(\mathcal{O}_p^t[\Gamma]^\times)$.
This completes the proof of Theorem~\ref{Th: tame local main}.

\subsubsection{Reduction to inertia subgroups}
In this section, we recall the fractional ideals in $\mathbb{Q}^c_p$ generated by norm resolvents and Galois Gauss sums from \cite[Chap. III]{F83} and the functorial behaviour of both elements. We let $U(\mathbb{Q}_p^c)$ denote the group of units of the ring of integer $\mathbb{Z}^c_p$ of $\mathbb{Q}_p^c$.

Fix an element $a\in E$ such that it generates a normal basis of $E/F$. Then, for each $\chi \in R_{\Gamma, p}$, we write $P(E/F, \chi)$ for the class of $(a|\chi)$ modulo $U(\mathbb{Q}_{p}^{c})$ (that is, the fraction ideal in $\mathbb{Q}^c_p$ generated by $(a|\chi)$), and we note that $P(E/F, \chi)$ is independent of the choice of $a$ and is inflation invariant (cf. \cite[pp. 106-107]{F83}). We also define the element $\mathcal{N}_{F/\mathbb{Q}_{p}}P(E/F, \chi)= \prod_{\omega}P(E/F, \chi^{\omega^{-1}})^{\omega}$, where $\{\omega\}$ is a transversal of $\Omega_{F}$ in $\Omega_{\mathbb{Q}_{p}}$. One can see (directly from the definition) that this element is the fractional ideal in $\mathbb{Q}_{p}^{c}$ generated by $\mathcal{N}_{F/\mathbb{Q}_{p}}(a|\chi)$ (with $a$ as above), and we note that it is independent of the choice of $\omega$ (cf. \cite[Chap. I, \S4, Prop. 4.4]{F83}). 

In the sequel, we abbreviate $\mathcal{N}_{F/\mathbb{Q}_{p}}P(E/F, \chi)$ to $\mathcal{N}_{F/\mathbb{Q}_{p}}P(\chi)$. For $\phi \in R_\Gamma$, we let $(j_{p}^c(\tau(F, \phi)))$ denote the class of $j_{p}^c(\tau(F, \phi))$ modulo $U(\mathbb{Q}_{p}^{c})$. We also write $\Delta$ for the inertia subgroup $\Gamma_{0}$ of $\Gamma$ and $B:=E^{\Delta}$ (so that $E/B$ is of odd degree by Remark~\ref{Cor: odd inertia}(i) and $B/F$ is unramified). We set $\mathrm{res}:=\mathrm{res}^{\Gamma}_{\Delta}$. 

The following results rely heavily on the methods and result of \cite[Th. 5.2]{E} and arguments in \cite[Chap. III, \S5-7]{F83}.
\begin{prop}\label{Prop: local ideal}\
	\begin{itemize}
		\item[(i)] For all $\chi \in R_{\Gamma}$, one has
		$(\tau(B, \mathrm{res} \chi))= (\tau(F, \chi))^{[B:F]}$ for ideals.
		\item[(ii)] For all $\chi \in R_{\Gamma, p}$, the following claims are valid.
		\begin{itemize} 
			\item[(a)] Fix an element $a\in E$ such that $\mathcal{A}_{E/F}=\mathcal{O}_{F}[\Gamma] \cdot a $ and an element $b\in E$ such that $\mathcal{A}_{E/F}=\mathcal{O}_{B}[\Delta] \cdot b $. Then, one has $ (b|\mathrm{res}\chi)_{E/B}= (a|\chi)_{E/F} \cdot\mathrm{Det}_{\chi}(\lambda)$, where $\lambda\in \mathcal{O}_{B}[\Gamma]^{\times}$.	
			\item[(b)] $\mathcal{N}_{B/\mathbb{Q}_{p}}P(\mathrm{res} \chi)  = \mathcal{N}_{F/\mathbb{Q}_{p}}P(\chi)^{[B:F]}$.
			\item[(c)] $\mathcal{N}_{F/\mathbb{Q}_{p}}P(\chi)=({j^c_{p}(\tau(F, \psi_{2}(\chi^{j^{-1}})-\chi^{j^{-1}}))})$ .
		\end{itemize}
	\end{itemize}
\end{prop}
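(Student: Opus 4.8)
The plan is to reduce all four assertions to the totally ramified extension $E/B$, where the essential case of (ii)(c) will be supplied by the argument of Erez in \cite[Th.~5.2]{E}, and to carry out the reduction by the resolvent and Galois--Gauss sum calculus of \cite[Chap.~III, \S5-7]{F83}. First I would record the structural facts that make this work: since $B/F$ is unramified one has $\mathcal{D}_{B/F}=\mathcal{O}_B$, hence $\mathcal{D}_{E/F}=\mathcal{D}_{E/B}$, and so $\mathcal{A}_{E/F}=\mathcal{A}_{E/B}$ as fractional $\mathcal{O}_E$-ideals; moreover $E/B$ is tamely and totally ramified of odd degree $|\Delta|$ by Remark~\ref{Cor: odd inertia}(i), so $\mathcal{A}_{E/B}$ is a free $\mathcal{O}_B[\Delta]$-module of rank one (Proposition~\ref{Lemma: weakly rami proj iff}), generated by the element $b$ of the statement. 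I would also note that the group of fractional ideals of $\mathbb{Q}_p^c$ is torsion-free (indeed divisible), so that an equality of $[B:F]$-th powers of such ideals forces the ideals themselves to be equal; this is what lets me descend identities from $B$ back to $F$.

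For (i) I would use the inductivity in degree zero of Galois--Gauss sums together with the behaviour of $\tau(F,-)$ under twists by unramified characters. Writing $C=\Gamma/\Delta=\mathrm{Gal}(B/F)$, one has $\mathrm{ind}^{\Gamma}_{\Delta}\mathrm{res}^{\Gamma}_{\Delta}\chi=\chi\cdot\mathrm{ind}^{\Gamma}_{\Delta}\mathbf{1}=\sum_{\eta\in\widehat{C}}\chi\cdot\mathrm{inf}(\eta)$ with every $\mathrm{inf}(\eta)$ unramified, so inductivity gives
\[
\prod_{\eta\in\widehat{C}}\tau(F,\chi\cdot\mathrm{inf}(\eta))=\tau(B,\mathrm{res}^{\Gamma}_{\Delta}\chi)\cdot\tau(F,\mathrm{ind}^{\Gamma}_{\Delta}\mathbf{1})^{\chi(1)}.
\]
Since twisting by an unramified character alters $\tau(F,\chi)$ only by a root of unity (it does not change the conductor), and $\tau(F,\mathrm{ind}^{\Gamma}_{\Delta}\mathbf{1})$ is a unit of $\mathbb{Z}_p^c$ because $B/F$ is unramified, passing to ideals yields $(\tau(B,\mathrm{res}^{\Gamma}_{\Delta}\chi))=(\tau(F,\chi))^{|C|}=(\tau(F,\chi))^{[B:F]}$, which is (i); all of this is contained in \cite[Chap.~III]{F83}.

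Claims (ii)(a) and (ii)(b) are then a matter of the transformation behaviour of resolvents. Since $a$ and $b$ both generate the rank-one lattice $\mathcal{A}_{E/F}=\mathcal{A}_{E/B}$, over $\mathcal{O}_F[\Gamma]$ and over $\mathcal{O}_B[\Delta]$ respectively, the change of generator is realised by an element $\lambda\in\mathcal{O}_B[\Gamma]^{\times}$, and the standard rule $(\lambda x\mid\psi)=\mathrm{Det}_{\psi}(\lambda)\cdot(x\mid\psi)$ of \cite[Chap.~III, \S5-7]{F83} gives (ii)(a) after restricting $\chi$ to $\Delta$. Reducing modulo $U(\mathbb{Q}_p^c)$ and using that $\mathrm{Det}_{\chi}(\lambda)$ is a unit, this says $P(E/B,\mathrm{res}^{\Gamma}_{\Delta}\chi)=P(E/F,\chi)$, and the same holds for every $\Omega_{\mathbb{Q}_p}$-conjugate of $\chi$ because restriction commutes with the Galois action (Proposition~\ref{Prop: adam}(ii)). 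Writing a transversal of $\Omega_B$ in $\Omega_{\mathbb{Q}_p}$ as a product of a transversal of $\Omega_F$ in $\Omega_{\mathbb{Q}_p}$ with one of $\Omega_B$ in $\Omega_F$ (the latter of cardinality $[B:F]$), substituting this identity into the definition of the norm resolvent, and invoking the usual transitivity of norm resolvents (cf. \cite[Chap.~I, \S4]{F83}), one obtains $\mathcal{N}_{B/\mathbb{Q}_p}P(\mathrm{res}^{\Gamma}_{\Delta}\chi)=(\mathcal{N}_{F/\mathbb{Q}_p}P(\chi))^{[B:F]}$, which is (ii)(b).

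Finally, for (ii)(c): the key case is that of the totally ramified odd-degree extension $E/B$, namely the identity $\mathcal{N}_{B/\mathbb{Q}_p}P(E/B,\psi)=(j^c_{p}(\tau(B,\psi_{2}(\psi^{j^{-1}})-\psi^{j^{-1}})))$ for $\psi\in R_{\Delta,p}$, and this I would extract from the argument of Erez in \cite[Th.~5.2]{E}, whose proof combines the self-duality of $\mathcal{A}_{E/B}$ under the trace form, the explicit structure of a tame totally ramified extension and the relevant congruences for tame Galois--Gauss sums. Applying this with $\psi=\mathrm{res}^{\Gamma}_{\Delta}\chi$ and setting $\mu:=\psi_{2}(\chi^{j^{-1}})-\chi^{j^{-1}}\in R_{\Gamma}$ — so that $\psi_{2}((\mathrm{res}^{\Gamma}_{\Delta}\chi)^{j^{-1}})-(\mathrm{res}^{\Gamma}_{\Delta}\chi)^{j^{-1}}=\mathrm{res}^{\Gamma}_{\Delta}\mu$, since $\psi_2$ commutes with restriction (Proposition~\ref{Prop: adam}(ii)) and with the identification induced by $j^c_p$ — and then combining (ii)(b) on the left with (i) applied to $\mu$ on the right, I would obtain $(\mathcal{N}_{F/\mathbb{Q}_p}P(\chi))^{[B:F]}=(j^c_{p}(\tau(F,\mu)))^{[B:F]}$; cancelling the exponent $[B:F]$ by torsion-freeness of the ideal group of $\mathbb{Q}_p^c$ gives (ii)(c). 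The hardest part, I expect, is twofold: reconciling the normalisations of \cite[Th.~5.2]{E} with the precisely specified $\psi_2$-twist and $\mathbb{Q}_p^c$-coefficient bookkeeping used here, so that the totally ramified base case really reads as displayed; and the verification in (ii)(a) that the change-of-generator element genuinely lies in the integral group ring $\mathcal{O}_B[\Gamma]^{\times}$ rather than merely in $B[\Gamma]^{\times}$, which is precisely where the integrality carried by $a$ and $b$ as generators of the lattice $\mathcal{A}_{E/F}$ enters.
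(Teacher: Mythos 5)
Your overall route is the paper's: reduce everything to the totally ramified extension $E/B$ (using that $B/F$ unramified gives $\mathcal{A}_{E/F}=\mathcal{A}_{E/B}$ and that $|\Delta|$ is odd), take the base case of (ii)(c) from Erez \cite[Th.~5.2]{E}, push it back to $F$ via (i) and (ii)(b), and cancel the exponent $[B:F]$ in the (torsion-free) ideal group of $\mathbb{Q}_p^c$; your direct proof of (i) by inductivity in degree zero and unramified twisting is in substance the proof of the result the paper simply cites from \cite{F83}, and your treatments of (ii)(b) and (ii)(c) agree with the paper's.

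The genuine gap is in (ii)(a). As written, you treat it as a change of generator: ``$a$ and $b$ both generate the rank-one lattice $\mathcal{A}_{E/F}$, so they differ by some $\lambda\in\mathcal{O}_B[\Gamma]^\times$, and the rule $(\lambda x\mid\psi)=\mathrm{Det}_\psi(\lambda)(x\mid\psi)$ gives the identity after restricting $\chi$ to $\Delta$.'' But $a$ generates $\mathcal{A}_{E/F}$ over $\mathcal{O}_F[\Gamma]$ while $b$ generates it over $\mathcal{O}_B[\Delta]$; there is no single module over a single group ring of which both are free generators, so no element $\lambda$ of $\mathcal{O}_B[\Gamma]^\times$ arises this way, and the change-of-generator rule only compares two resolvents formed over the \emph{same} group, whereas the asserted identity compares the $\Delta$-resolvent $(b\mid\mathrm{res}\chi)_{E/B}$ with the $\Gamma$-resolvent $(a\mid\chi)_{E/F}$ and produces $\mathrm{Det}_{\chi}(\lambda)$ for a character $\chi$ of $\Gamma$. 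The missing bridge is exactly what the paper supplies: one base-changes to $\mathcal{O}_B$, uses Fr\"ohlich's isomorphism $j\colon B\otimes_F E\cong\mathrm{Map}(\Gamma,E)^{\Delta}$ to identify $\mathcal{O}_B\otimes_{\mathcal{O}_F}\mathcal{A}_{E/F}$ with $\mathrm{Map}(\Gamma,\mathcal{A}_{E/F})^{\Delta}$, shows that the function $h_b$ (supported on $\Delta$, $g\mapsto g(b)$) is a free $\mathcal{O}_B[\Gamma]$-generator of the latter while $1\otimes a$ is one of the former -- this is where the integral $\lambda$ comes from, answering the worry you flag at the end -- and then invokes the computation of \cite[Chap.~III, \S6, (6.9)--(6.12)]{F83} identifying the $\Gamma$-resolvent of $j^{-1}(h_b)$ with $(b\mid\mathrm{res}\chi)_{E/B}$. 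That resolvent comparison across the two groups is the substantive content of (ii)(a), and your sketch does not account for it.
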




\begin{proof}
	Claim (i) is given in \cite[Cor. 1 to Th. 25]{F83}. 
	
	Now we move to claim (ii). In the rest of this proof, we shall consider $\chi \in R_{\Gamma, p}$.
	
	For part (a), we recall the set $\mathrm{Map}(\Gamma, E)^{\Delta}$ of maps of $\Delta$-sets $\Gamma \rightarrow E$ 
	(more precisely, the set of maps $f:\Gamma \rightarrow E$ such that $h(f(g))= f(h g)$ for all $h\in \Delta$ and $g\in \Gamma$). 
	It is a $B$-algebra via $(xf)(g)=xf(g)$ for all $x \in B$, $f \in \mathrm{Map}(\Gamma, E)^{\Delta}$, $g \in \Gamma$, and it is a $\Gamma$-module via $(gf)(g')=f(g'g)$ for all $f \in \mathrm{Map}(\Gamma, E)^{\Delta}$, $g$, $g' \in \Gamma$. Then, there is an isomorphism of $B[\Gamma]$-modules (taken from \cite[Chap. III, \S6, Lem. 6.2]{F83})
	\[ j:B \otimes_{F}E \rightarrow \mathrm{Map}(\Gamma, E)^{\Delta}\]
	defined by $(j(x\otimes y))(g)=xg(y)$ for $x \in B$ , $y \in E$, and $g \in \Gamma$, and it induces an isomorphism of $\mathcal{O}_{B}[\Gamma]$-modules	
	\[ \mathcal{O}_{B}\otimes_{\mathcal{O}_{F}}\mathcal{A}_{E/F} \rightarrow \mathrm{Map}(\Gamma , \mathcal{A}_{E/F})^{\Delta}. \]
	(See, for example, the argument of \cite[Lem. 2.4]{CV} and the isomorphism given in (8) of loc.cit.
	with notations $E=K$, $B=F$, $F=k$, and $\mathcal{A}_{E/F}$ represented by $\mathcal{P}_{K}^{-n}$.)
	
	We also construct a map $h_b:\Gamma\rightarrow E$ associated to $b$ by setting (as in \cite[Chap. III, \S6, (6.7)]{F83}) 
	\[ h_b(g) = \left\{ \begin{array}{ll}
		g(b), & \quad \mbox{if } \quad g \in \Delta ;\\
		0, & \quad \mbox{if } \quad g\in \Gamma \setminus \Delta .\end{array} \right. \] 
	We note that $h_{b}$ belongs to $\mathrm{Map}(\Gamma , \mathcal{A}_{E/F})^{\Delta}$ and it generates $\mathrm{Map}(\Gamma , \mathcal{A}_{E/F})^{\Delta}$ freely over $\mathcal{O}_{B}[\Gamma]$ (this can be shown by the argument given in loc.cit. replacing $\mathcal{O}_{L}$ by $\mathcal{A}_{E/F}$).
	
	In this way, $j^{-1}(h_{b})$ is a free generator of $\mathcal{O}_{B}\otimes_{\mathcal{O}_{F}}\mathcal{A}_{E/F}$ over $\mathcal{O}_{B}[\Gamma]$. In addition, $1\otimes a$ is also a free generator of $ \mathcal{O}_{B}\otimes_{\mathcal{O}_{F}}\mathcal{A}_{E/F}$ over $\mathcal{O}_{B}[\Gamma]$. Then, there exists an element $\lambda\in \mathcal{O}_{B}[\Gamma]^{\times}$ such that $j^{-1}(h_b)={\lambda}(1\otimes a)$. The rest of the proof of part (a) is just copying down \cite[Chap. III, \S6, (6.9)-(6.12)]{F83}, with $j^{-1}g$ in loc.cit. replaced by $j^{-1}(h_b)$. 
	
	With the result in part (a), the proof for part (b) is exactly the same as the bottom of p.131 of loc.cit. (see \cite[Chap. I, \S4, Prop. 4.4]{F83} for the Galois action formula on resolvents).
	
	Now, given the results in claim (i) and (ii)(b), we are therefore reduced to showing that the stated equality in part (c) is valid for $E/B$ (that is of odd degree), and this is proved by Erez in \cite[Th. 5.2 and \S7.1]{E}. Then, one has
	%
	\begin{align*}
		(j_p^c(\tau(F, \psi_{2}(\chi^{j^{-1}}) - \chi^{j^{-1}})))^{[B:F]}
		=&\, (j_p^c(\tau(B, \mathrm{res} (\psi_{2} (\chi^{j^{-1}}) -\chi^{j^{-1}}))))\\
		=&\, (j_p^c(\tau(B, \psi_{2}( \mathrm{res} \chi^{j^{-1}})) - \mathrm{res} \chi^{j^{-1}}))\\
		=&\, \mathcal{N}_{B/\mathbb{Q}_{p}}P(\mathrm{res} \chi)  
		= \mathcal{N}_{F/\mathbb{Q}_{p}}P(\chi)^{[B:F]}.
	\end{align*}
	Here, the second equality follows from Proposition~\ref{Prop: adam}(ii), and the third equality is given by the result of Erez. 
	This finishes the proof of our claim.
	
	%
	
\end{proof}


\subsubsection{Proof of Lemma~\ref{Lem: tame local norm main}}
To prove Lemma~\ref{Lem: tame local norm main}, as Erez pointed out in \cite[Lem. 8.4]{E} for extensions of the odd degree, one needs only to copy the argument for \cite[Ch. IV, \S1, Th. 31]{F83} upon replacing Theorem 23 and 25 by Proposition~\ref{Prop: local ideal}(ii)(c) and (ii)(a) respectively. For a complete proof we refer the reader to the PhD thesis \cite[\S6.1.4]{K_phd} of the author.

We also note that, in this proof we are using a modified local Galois Gauss sums $\tau^{*}(F, \phi)=\tau(F, \phi)y(F, \phi)^{-1}z(F, \phi)$ for $\phi \in R_\Gamma$ (defined as in \cite[Ch. IV, \S1, (1.6)]{F83}) in place of $\tau'(F, \phi)$. And we recall that, for all $\phi \in R_\Gamma$, one has $z(F, \phi)=\det_{\phi}(\gamma')$ for some $\gamma' \in \Gamma$ and that the map $\phi \rightarrow z(F, \phi)$ lies in ${\rm Hom}^+(R_\Gamma, \mu(\mathbb{Q}^c))^{\Omega_{\mathbb{Q}}}$ (where $\mu(\mathbb{Q}^c)$ denotes the group of roots of unity in $\mathbb{Q}^c$), and so $z(F, \psi_{2}(\phi) - \phi)=\det_{\phi}(\gamma')$ (by Proposition~\ref{Prop: adam}(iii)).

\subsection{Special cases of weakly ramified extensions}\label{S: weak rami K_0 result}

The proof of the following result is exactly the same to \cite[Th. 8.1]{BBH} of Bley, Burns and Hahn.

\begin{lemma}\label{Lemma: local weakly rami main}
Fix an odd prime number $p$. Let $E/F$ be an abelian and weakly and wildly ramified Galois extension of $\mathbb{Q}_p$ such that $F/\mathbb{Q}_p$ is unramified and write $\Gamma:=\mathrm{Gal}(E/F)$. Suppose $\mathcal{A}_{E/F}$ exists and that the inertia subgroup of $\Gamma$ is cyclic. Then, in $K_0(\ZZ_p[\Gamma],\QQ_p[\Gamma])$, one has $\mathfrak{a}_{E/F} = \mathfrak{c}_{E/F}$. 
\end{lemma}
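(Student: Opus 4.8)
The plan is to follow the strategy of \cite[Th. 8.1]{BBH}, adapted to the weakly ramified setting, and to reduce the statement to a computation in $K_0(\mathcal{O}_p^t[\Gamma],\QQ_p^c[\Gamma])$ via Taylor's Fixed Point Theorem. Concretely, by \S\ref{S: taylor fixed point}, the natural map $K_0(\ZZ_p[\Gamma],\QQ_p[\Gamma]) \to K_0(\mathcal{O}_p^t[\Gamma],\QQ_p^c[\Gamma])$ is injective, so it suffices to prove the equality of the images of $\mathfrak{a}_{E/F}$ and $\mathfrak{c}_{E/F}$ there. Using Remark~\ref{frak a' def}, the image of $\mathfrak{a}_{E/F}$ equals that of $\mathfrak{a}'_{E/F} = \Delta(\mathcal{A}_{E/F}) - \delta_{\ZZ_p,\QQ_p^c,\Gamma}(j^c_p(T^{(2)}_{E/F}))$, and by definition $\mathfrak{c}_{E/F} = \delta_{\ZZ_p,\QQ_p,\Gamma}((1-\psi_{2,*})(y_{E/F}))$. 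So the task is to show that $\Delta(\mathcal{A}_{E/F})$ and $\delta_{\ZZ_p,\QQ_p^c,\Gamma}(j^c_p(T^{(2)}_{E/F})) + \mathfrak{c}_{E/F}$ agree after pushing forward to $K_0(\mathcal{O}_p^t[\Gamma],\QQ_p^c[\Gamma])$.

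First I would unwind $T^{(2)}_{E/F} = (\tau^\Gamma_F\cdot(\psi_{2,*}-1)(\tau'_{E/F}))^{-1}$ together with Remark~\ref{Rem: loc JG sums def}(ii) to express $j^c_p(T^{(2)}_{E/F})$ in terms of norm resolvents and modified Galois-Gauss sums, exactly as in the proof of Theorem~\ref{Th: tame local main}: using Proposition~\ref{Prop: local in Q}(ii) and (iii), one writes $\Delta(\mathcal{A}_{E/F}) = \delta_{\ZZ_p,\QQ_p^c,\Gamma}\bigl(\sum_{\chi\in\widehat\Gamma} (\delta_F^{\chi(1)}\mathcal N_{F/\QQ_p}(a|\chi)) e_\chi\bigr)$ for a generator $a$ of $\mathcal{A}_{E/F}$, and the factor $\delta_F^{\chi(1)}$ can be absorbed, modulo $\mathrm{Det}(\mathcal{O}_p^t[\Gamma]^\times)$, into $j^c_p(\tau_F)^{\chi(1)}$, i.e.\ into the ``induced discriminant'' part of $T^{(2)}_{E/F}$. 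This reduces everything to comparing, modulo $\mathrm{Det}(\mathcal{O}_p^t[\Gamma]^\times)$ (whose preimage under \eqref{Eq: Taylor kernel Det} is exactly the kernel of the relevant boundary map), the product of norm resolvents $\mathcal N_{F/\QQ_p}(a|\chi)$ with the appropriate twisted, modified Galois-Gauss sum, multiplied by the unramified-characteristic correction $(1-\psi_{2,*})(y_{E/F})$ coming from $\mathfrak{c}_{E/F}$.

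Next I would invoke the explicit description of $(1-\psi_{2,*})(y_{E/F})$ from Lemma~\ref{Prop: local c tame vanishes}(i): since $p$ is odd and $E/F$ is weakly ramified with $\mathcal{A}_{E/F}$ existing, the inertia subgroup has odd order by Remark~\ref{Cor: odd inertia}(ii), and since $\Gamma$ is assumed abelian, the lemma gives $(1-\psi_{2,*})(y_{E/F}) = (1-e_I) + \sigma^{-1}e_I$. This handles the unramified part of the characters. For the ramified part, the core is a norm-resolvent computation in the style of Pickett and Vinatier \cite{PV} (as used in Proposition~\ref{Th: BBH thm}(ii)) and of Bley and Cobbe \cite{BC}: for an abelian, weakly and wildly ramified extension of an absolutely unramified local field with cyclic inertia, one has an explicit relation between the resolvent $(a|\chi)$ of a generator of $\mathcal{A}_{E/F}$ and the second-Adams-twisted modified Galois-Gauss sum $\tau'(F,\psi_2(\chi)-\chi)$, valid modulo $\mathrm{Det}(\mathcal{O}_p^t[\Gamma]^\times)$. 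Assembling these, the character-by-character identity forces $\mathfrak{a}'_{E/F} = \mathfrak{c}_{E/F}$ in $K_0(\mathcal{O}_p^t[\Gamma],\QQ_p^c[\Gamma])$, and descending along Taylor's injection gives the claim in $K_0(\ZZ_p[\Gamma],\QQ_p[\Gamma])$.

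The main obstacle, as flagged in the introduction and in Remark~\ref{Rem: adam commute ind coprime}, is that $p$ odd does not prevent $|\Gamma|$ from being even, so $\psi_2$ need not commute with induction and need not preserve irreducibility; moreover $\psi_2$ need not preserve the Galois-Gauss sums' behaviour in the way it does in the odd-degree case of \cite{BBH}. The saving grace here is that $\Gamma$ is assumed \emph{abelian}, so every irreducible character is linear and $\psi_2(\chi) = \chi^2$ is again linear, which makes $\psi_2$ transparent on characters and lets the Pickett--Vinatier/Bley--Cobbe local computations go through essentially unchanged; the wild, weakly ramified, cyclic-inertia, absolutely-unramified hypotheses are precisely what \cite{BC} needs. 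So the real work is bookkeeping: matching the $\delta_F$ versus $\tau_F$ normalisation (Proposition~\ref{Prop: local in Q}(iii)), the ramified/unramified split of characters, and the $e_I$-correction from $\mathfrak{c}_{E/F}$, all modulo $\mathrm{Det}(\mathcal{O}_p^t[\Gamma]^\times)$ — which is exactly what \cite[Th. 8.1]{BBH} carries out in the odd-degree case and what one transcribes here.
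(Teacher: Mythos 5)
Your proposal is correct and follows essentially the same route as the paper, which simply transcribes the proof of \cite[Th. 8.1]{BBH} (Taylor's Fixed Point Theorem, Breuning's resolvent description of $\Delta(\mathcal{A}_{E/F})$, and the Pickett--Vinatier/Bley--Cobbe computation) after observing, exactly as you do, that those local computations require only that $\Gamma$ be abelian with cyclic inertia of odd order $p$ (guaranteed here by Remark~\ref{Cor: odd inertia}(ii)) rather than that $|\Gamma|$ be odd. Your identification of the abelian hypothesis as the reason $\psi_2$ remains transparent on characters is precisely the point the paper makes in the remark following the lemma.
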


We note that, under these hypotheses, the computation by Bley and Cobbe in \cite[\S5]{BC} and those of Pickett and Vinatier in \cite[\S3.2]{PV} do not rely on the assumption that $\Gamma$ is of odd degree but that $\Gamma$ is abelian and that the inertia subgroup $\Gamma_{0}$ of $\Gamma$ has odd order $p$ (see \cite[Rem. 1 and \S3.1]{BC} and \cite[Cor. 3.4]{PV}). The reader can find a complete proof of this result in \cite[\S7.1]{K_phd}.

\section{Elements in the class group}

Let $L/K$ be any weakly ramified finite Galois extension of number fields and set $G:=\mathrm{Gal}(L/K)$. 
Suppose $\mathcal{A}_{L/K}$ exists, then it is projective (see Proposition~\ref{Lemma: weakly rami proj iff}). 
In view of the `classical' Galois module theory, in this section, we consider the stable-isomorphism class $[\mathcal{A}_{L/K}]$ defined by $\mathcal{A}_{L/K}$ in the class group ${\rm Cl}(\mathbb{Z}[G])$.

\subsection{Symplectic Galois-Jacobi sums}\label{S: Symp J}


In the section, we define relevant elements in Conjecture~\ref{C:boas} of symplectic Galois-Jacobi sums.

To start, for a real number $x$, we write $\text{sign}(\chi)\in \{\pm 1 \}$ for the sign of $x$. Then, for each $\chi \in \widehat{G}$ we write
\begin{equation}\label{Eq: def symp JG}
J'_{2, S} (L/K, \chi):=
\left\{ \begin{array}{ll}
	\mathrm{sign}\bigl(    
	\tau(K, \psi_{2}(\chi) - 2\chi) \cdot y(K, 1-\psi_{2}(\chi))\bigr) , & \quad \mbox{if } \quad \chi \in \mathrm{Symp}(G) ;\\
	1, & \quad \mbox{otherwise} .\end{array} \right.
\end{equation}
and we define the `equivariant symplectic Galois-Jacobi sum' of $L/K$ by setting
\[
J'_{2, S, L/K} := \sum_{\chi \in \widehat{G}} J'_{2, S} (L/K, \chi).
\]
Clearly, such element has order at most $2$ and belongs to $\zeta(\QQ[G])^{\times}$.
%
%
%
%
To interpret this term on signs, we show the following results (cf. \cite[Th. 8.1 and Th. 1.7]{AC}). 

In the sequel, for any integer $m \geq 1$, we write $H_{4m}$ for the generalised quaternion group of order $4m$.
\begin{lemma}\label{Lemma: sign depend}
Suppose that $L/K$ is tamely ramified and that $\mathcal{A}_{L/K}$ exists. The following claims are valid.
\begin{itemize}
\item[(i)] The element $J'_{2, S, L/K}$ depends solely on the value of $\psi_{2}$-twisted unramified characteristics of non-trivial irreducible symplectic characters of $G$. 
\item[(ii)] Suppose further that $K$ is an imaginary quadratic field such that $\Cl(\mathcal{O}_{F})$ contains an element of order $4$ and that $G:=\Gal(L/K)\cong H_{4\ell}$ for some sufficiently large prime $\ell$ with $\ell \equiv 3 \pmod{4}$. Then, there exists infinitely many tamely ramified extension $L/K$ such that $J'_{2, S, L/K}$ is not trivial.
\end{itemize}
\end{lemma}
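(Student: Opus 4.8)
The plan is to deduce both claims from a sign analysis of the quantity inside $\mathrm{sign}(\,\cdot\,)$ in \eqref{Eq: def symp JG}, combined with the results of \cite{AC} on the Artin root numbers of $\psi_{2}$-twisted symplectic characters. Fix a non-trivial irreducible symplectic character $\chi$ of $G$; it is real-valued of even degree, and by the Hasse--Schilling--Maass theorem (Lemma~\ref{Prop: Nrd bijective cond}) together with the Fr\"ohlich--Queyrut theorem the Galois--Gauss sum $\tau(K,\chi)$ is a non-zero real number with $\tau(K,\chi)^{2}=\mathrm{N}\mathfrak{f}(\chi)>0$. Hence $\mathrm{sign}\bigl(\tau(K,\psi_{2}(\chi)-2\chi)\bigr)=\mathrm{sign}\bigl(\tau(K,\psi_{2}(\chi))\bigr)$, and since $y(K,1-\psi_{2}(\chi))$ is by definition already a value of an unramified characteristic, claim (i) reduces to showing that $\mathrm{sign}\bigl(\tau(K,\psi_{2}(\chi))\bigr)$ is a product of local factors, each depending only on the inertia and Frobenius data at a ramified place.

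For (i) I would then combine the comparison of Galois--Jacobi sums with modified Galois--Gauss sums in Proposition~\ref{Prop: JG decomp}(iii) with the local decompositions in Proposition~\ref{Prop: JG decomp}(i),(ii). After disposing of the explicit factor $\tau^{G}_{K}$ and of $\tau^{\dagger}_{L/K}$ --- the latter by Fr\"ohlich--Queyrut applied to the real character $\mathrm{ind}^{\QQ}_{K}\chi$ --- the sign in question decomposes as a product, over the tamely ramified places $v$ of $K$, of local quantities built from $\tau_{L_{w}/K_{v}}$ and $y_{L_{w}/K_{v}}$. Since $L/K$ is tame, the inertia group at each such place is cyclic of odd order (Remark~\ref{Cor: odd inertia}(i)), and each local factor can be treated exactly as in \cite{AC}: the representation-theoretic analysis there of the extent to which $\psi_{2}$ commutes with induction from subgroups reduces the problem to dihedral characters, for which the explicit root-number formula of Fr\"ohlich and Queyrut \cite{FQ73} shows that the relevant sign is determined by the restriction of $\chi$ to the inertia subgroup and by the Frobenius, i.e.\ by the value of the unramified characteristic of $\psi_{2}(\chi)-\chi$. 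This yields (i). The main obstacle here is the failure of $\psi_{2}$ to commute with induction when $|G|$ is even (Remark~\ref{Rem: adam commute ind coprime}); this is the technical core of \cite{AC}, which I would import rather than reprove.

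For (ii) I would argue as follows. Since $L/K$ is tamely ramified and $\mathcal{A}_{L/K}$ exists, every inertia subgroup of $G\cong H_{4\ell}$ is cyclic of odd order (Proposition~\ref{Lemma: weakly rami proj iff}, \eqref{Eq: Hilbert}, Remark~\ref{Cor: odd inertia}(i)), and the only non-trivial such subgroup of $H_{4\ell}$ is its normal Sylow $\ell$-subgroup $C_{\ell}$; hence every place of $K$ ramifying in $L$ has inertia exactly $C_{\ell}$, and $M:=L^{C_{\ell}}$ is an everywhere unramified cyclic extension of $K$ of degree $4$. Conversely, the hypothesis that $\Cl(\mathcal{O}_{K})$ contains an element of order $4$ provides such an $M/K$ by class field theory, and the embedding problem $1\to C_{\ell}\to H_{4\ell}\to \mathrm{Gal}(M/K)\to 1$ can then be solved with $L/M$ tamely and totally ramified of degree $\ell$ at a suitable finite set of primes, chosen by Chebotarev so that $L/K$ is Galois with group $H_{4\ell}$; for $\ell$ sufficiently large with $\ell\equiv 3\pmod 4$ the obstruction to this vanishes, and the construction can be carried out in infinitely many ways. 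For such an $L/K$, the non-trivial irreducible symplectic characters of $H_{4\ell}$ are precisely the two-dimensional characters $\mathrm{ind}^{K}_{M_{2}}\theta$, where $M_{2}$ is the (unramified) quadratic subextension of $M/K$ and $\theta$ is a ramified ray-class character of $M_{2}$ with $\theta^{\sigma}=\bar\theta$, $\sigma$ a generator of $\mathrm{Gal}(M_{2}/K)$. By (i), $J'_{2,S}(L/K,\chi)$ for such $\chi$ reduces to a product of local unramified characteristics, and the reciprocity law, together with the fact that $M_{2}/K$ is the quadratic subextension of a \emph{cyclic} quartic unramified extension (a genus-theoretic condition on the conductor of $\theta$), forces this product to equal $-1$ for a suitable choice of $\theta$; hence $J'_{2,S,L/K}\neq 1$.

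I expect the decisive difficulty to lie in the final step of (ii): extracting the precise sign $-1$ from the structure of $\Cl(\mathcal{O}_{K})$ requires a careful computation of the local Jacobi-type sums attached to $\theta$ and of how the Frobenius classes at the ramified primes sit inside $\mathrm{Gal}(M/K)\cong\ZZ/4$, in parallel with the computations of \cite[\S8 and Th.~1.7]{AC}; a secondary, more routine obstacle is to verify simultaneously the solvability of the embedding problem, the tameness, and the existence of $\mathcal{A}_{L/K}$, which is what forces the "sufficiently large $\ell$" hypothesis.
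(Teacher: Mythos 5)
Your plan for claim (i) ends in roughly the right place but the route you take has a genuine error, and it is also much heavier than what the paper does. The paper's proof of (i) is essentially a citation: since $L/K$ is tame and $\mathcal{A}_{L/K}$ exists, every ramified place has odd ramification degree (Remark~\ref{Cor: odd inertia}(i)), so Proposition~\ref{Lem: local root number + thm}(i) (i.e.\ \cite[Th.~7.1]{AC}) gives $W(K_v,\psi_2(\chi))=W(K_v,2\chi)=1$ at each ramified place; after the usual local decomposition of Galois--Gauss sums this forces $\mathrm{sign}\bigl(\tau(K,\psi_{2}(\chi)-2\chi)\bigr)=1$, so the sign in \eqref{Eq: def symp JG} is exactly the $\psi_2$-twisted unramified characteristic factor, which is claim (i). Your detour through Proposition~\ref{Prop: JG decomp}(iii) introduces $\tau^{G}_{K}$ and $\tau^{\dagger}_{L/K}$ and then disposes of the latter ``by Fr\"ohlich--Queyrut applied to the real character $\mathrm{ind}^{\QQ}_{K}\chi$''; this step is wrong. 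For symplectic $\chi$ the induced character is again symplectic, not the character of a real (orthogonal) representation, so the Fr\"ohlich--Queyrut theorem says nothing about its root number -- the sign of $\tau(\QQ,\mathrm{ind}^{\QQ}_{K}\chi)$ is precisely the kind of symplectic root number that can be $-1$ (it is what the class $W_{L/K}$ is made of). Moreover $\tau_K^{\chi(1)}$ is not harmless either (for $K$ imaginary quadratic, $\tau_K^{2}=-|d_K|<0$). The fix is simply to work directly with $\tau(K,\psi_2(\chi)-2\chi)$ and quote the local root-number result, as the paper does.

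For claim (ii) there is a genuine gap. The paper does not construct the extensions: it invokes \cite[Th.~1.7 and \S10]{CV}, which under exactly the stated hypotheses on $K$, $\ell$ and $G\cong H_{4\ell}$ produce infinitely many tamely ramified extensions $L/K$ for which $\mathcal{A}_{L/K}$ exists, which are ramified at a \emph{single} place $v'$, and whose decomposition group at $v'$ is the full group $G$. Then Proposition~\ref{Lem: local root number + thm}(ii) (i.e.\ \cite[Prop.~8.2]{AC}) supplies an irreducible symplectic $\chi$ with local twisted unramified characteristic equal to $-1$, and since $v'$ is the only ramified place the global product of local terms is $-1$; claim (i) then gives $J'_{2,S,L/K}\neq 1$. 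Your sketch replaces this citation by an embedding-problem/Chebotarev construction and then asserts that ``the reciprocity law \dots\ forces this product to equal $-1$ for a suitable choice of $\theta$'' -- a step you yourself flag as unresolved, and which is the entire content of the claim: if you ramify at a ``suitable finite set of primes'' without controlling their number, their decomposition groups and their Frobenius data, the product over ramified places of the local signs can perfectly well be $+1$. In particular you need the decomposition group at the ramified place to be (generalized) quaternion for the $-1$ of \cite[Prop.~8.2]{AC} to be available at all; inertia equal to $C_\ell$ is not enough. You would also have to verify tameness, the existence of $\mathcal{A}_{L/K}$ and the solvability of the embedding problem, all of which is exactly what \cite{CV} provides and what the paper is content to quote.
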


In light of the decomposition result in Proposition \ref{Prop: JG decomp}, it is essential for us to recall the some local results. Claim (ii) of the following results motivate the definitions above.  
\begin{prop}\label{Lem: local root number + thm}
Let $E/F$ be a tamely ramified Galois extension of non-archimedean local fields that has odd ramification degree and set $\Gamma:= {\rm Gal}(E/F)$. 
\begin{itemize}
\item[(i)]  (\cite[Th. 7.1]{AC}) For each $\phi \in R_\Gamma$, we let $W(F, \phi)$ denote the local root number of $\phi$ as discussed in \cite[Chap. II, \S4]{M}.
Then, for each $\chi$ in ${\rm Symp}(\Gamma)$, one has the root number $W(F,\psi_2(\chi)) = W(F,2\chi) = 1$. In particular, this implies that 
\[
\mathrm{sign}(\tau(F,\psi_{2}(\chi))) =  \mathrm{sign}(\tau(F,2\chi))=1.
\]
\item[(ii)] (\cite[Prop. 8.2]{AC})  If $\Gamma \cong H_{4m}$ with odd $m$, then there exists an irreducible symplectic character $\chi$ of $\Gamma$ such that 
\[ y(F, \psi_{2}(\chi))=-1. \]
\end{itemize}
\end{prop}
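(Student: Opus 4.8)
Both statements are, as indicated by the parenthetical references, taken from \cite{AC}; here I sketch the arguments one would run. For part (i) I would first reduce the sign assertions to the root number assertions: a symplectic character is self-dual, hence real-valued, and by Proposition~\ref{Prop: adam}(ii) so is $\psi_2(\chi)$ (and trivially $2\chi$); for a real-valued virtual character $\phi$ the Galois-Gauss sum satisfies $\tau(F,\phi)=W(F,\phi)\cdot(\mathrm{N}\mathfrak{f}_\phi)^{1/2}$ with $(\mathrm{N}\mathfrak{f}_\phi)^{1/2}>0$, so $\mathrm{sign}(\tau(F,\phi))=W(F,\phi)$. The equality $W(F,2\chi)=W(F,\chi)^2=1$ is then immediate, since the local root number of a symplectic character lies in $\{\pm1\}$. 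The substance is $W(F,\psi_2(\chi))=1$. Because $E/F$ is tame, $\Gamma$ is metacyclic: the inertia subgroup $I:=\Gamma_0$ is cyclic, normal and (by hypothesis) of odd order, and $\Gamma/I$ is cyclic, generated by Frobenius. Fixing a symplectic irreducible $\chi$, Clifford theory relative to $I$ gives $\chi=\mathrm{ind}_T^\Gamma(\widetilde{\chi})$ with $I\le T\le\Gamma$ the stabiliser of a character of $I$ occurring in $\chi|_I$ and $T/I$ cyclic; since $|I|$ is odd, $\psi_2$ acts bijectively on the characters of $I$, which is what keeps the inertia contribution under control at each stage.

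Next I would measure the failure of $\psi_2$ to commute with $\mathrm{ind}_T^\Gamma$. A double-coset computation shows that $\psi_2(\mathrm{ind}_T^\Gamma\widetilde{\chi})$ and $\mathrm{ind}_T^\Gamma(\psi_2\widetilde{\chi})$ differ only by a term supported on those $h\in\Gamma\setminus T$ with $h^2\in T$; in the cyclic quotient $\Gamma/I$ such $h$ exist precisely when $[\Gamma:T]$ is even (cf.\ Remark~\ref{Rem: adam commute ind coprime}), and when they do the correction term is a genuine virtual character inflated from a strictly smaller metacyclic --- ultimately abelian or dihedral --- section of $\Gamma$. Feeding this into the inductivity of local constants in degree zero, one rewrites $W(F,\psi_2(\chi))$ as a product of root numbers of abelian and dihedral characters, each of which equals $+1$ by the classical fact that quadratic characters have root number $+1$ together with the explicit dihedral root number formulas of Fröhlich and Queyrut \cite{FQ73}. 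I expect the main obstacle to be exactly this book-keeping --- a clean description of the defect of $\psi_2\circ\mathrm{ind}$ --- and it is precisely where the odd-ramification-degree hypothesis is essential.

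For part (ii) I would argue by explicit computation. Write $H_{4m}=\langle a,b\mid a^{2m}=1,\ b^2=a^m,\ bab^{-1}=a^{-1}\rangle$. Since $m$ is odd and $\Gamma_0$ has odd order with $\Gamma/\Gamma_0$ cyclic, one identifies $\Gamma_0$ with the cyclic normal subgroup $\langle a^2\rangle\cong C_m$, so $\Gamma/\Gamma_0\cong C_4$ is generated by the class of $b$ and any Frobenius lift $\sigma$ is an odd power of $b$ times an element of inertia. The two-dimensional irreducible characters are the $\chi_j=\mathrm{ind}_{\langle a\rangle}^\Gamma(\theta^j)$ with $\theta$ faithful on $\langle a\rangle\cong C_{2m}$, and $\chi_j$ is symplectic exactly when $j$ is odd (equivalently $\theta^j(a^m)=-1$); take $\chi:=\chi_1$, which is ramified as $\theta|_{\Gamma_0}$ is non-trivial. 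Every $g\in\Gamma\setminus\langle a\rangle$ satisfies $g^2=a^m$, so $\psi_2(\chi)$ is trivial on the order-two centre $\langle a^m\rangle$ and is inflated from $D_{2m}:=\Gamma/\langle a^m\rangle$; using $\psi_2(\chi)(a^k)=\zeta_{2m}^{2k}+\zeta_{2m}^{-2k}$ and $\psi_2(\chi)(a^kb)=\chi(a^m)=-2$, a short orthogonality computation gives
\[ \psi_2(\chi)=\rho+\epsilon-1_\Gamma \]
as virtual characters inflated from $D_{2m}$, where $\rho=\mathrm{ind}_{\langle\bar a\rangle}^{D_{2m}}(\eta)$ is two-dimensional, $\epsilon$ is the sign character of $D_{2m}$ and $1_\Gamma$ the trivial character. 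Finally, the image of $\Gamma_0$ in $D_{2m}$ is the full rotation subgroup (as $m$ is odd), so $\rho$ is ramified and $y(F,\rho)=1$, whereas $1_\Gamma$ and $\epsilon$ are unramified with $y(F,1_\Gamma)=(-1)^{1}\det_{1_\Gamma}(\sigma)=-1$ and $y(F,\epsilon)=(-1)^{1}\epsilon(\sigma)=-(-1)=1$ (since $\sigma$ maps to a reflection of $D_{2m}$). As $\phi\mapsto y(F,\phi)$ is a homomorphism, $y(F,\psi_2(\chi))=y(F,\rho)\,y(F,\epsilon)\,y(F,1_\Gamma)^{-1}=1\cdot1\cdot(-1)=-1$, as claimed; the only subtle point is the decomposition of $\psi_2(\chi)$ displayed above, and even that reduces to a routine character inner-product computation. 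Complete details of both claims are in \cite{AC}.
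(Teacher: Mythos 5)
The paper itself offers no argument for this proposition beyond the citations to \cite[Th. 7.1]{AC} and \cite[Prop. 8.2]{AC}, so the comparison is really between your sketches and those imported proofs. For part (i) your reduction of the sign statements to root numbers is sound in the cases at hand (one should note that ``real-valued $\Rightarrow \tau=W\cdot(\text{positive})$'' requires the determinant condition, but here $\det_{\psi_2(\chi)}=(\det_\chi)^2=1$ by Proposition \ref{Prop: adam}(iii), so $\tau(F,\psi_2(\chi))$ and $\tau(F,2\chi)$ are indeed real), and your outline of the substantive claim $W(F,\psi_2(\chi))=1$ --- controlling the defect of $\psi_2$ against $\mathrm{ind}_T^{\Gamma}$ and then invoking the Fr\"ohlich--Queyrut evaluation of dihedral root numbers --- is exactly the strategy the paper's introduction attributes to \cite{AC}; however, the decisive double-coset computation of $\psi_2(\mathrm{ind}_T^{\Gamma}\widetilde{\chi})-\mathrm{ind}_T^{\Gamma}(\psi_2\widetilde{\chi})$ and the verification that the correction terms have root number $+1$ are only asserted, so as a standalone argument part (i) remains, as in the paper, a citation of \cite[Th. 7.1]{AC}. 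Part (ii), by contrast, you prove outright and correctly: $\Gamma_0=\langle a^2\rangle$ is forced because $\Gamma/\Gamma_0$ must be cyclic and the ramification degree is odd; the values $\psi_2(\chi)(a^k)=\zeta_m^{k}+\zeta_m^{-k}$ and $\psi_2(\chi)(a^kb)=\chi(a^m)=-2$ do give $\psi_2(\chi)=\rho+\epsilon-1_\Gamma$ inflated from $D_{2m}$; any Frobenius lift maps to a reflection while the image of inertia is the full rotation subgroup (as $m$ is odd), so multiplicativity of $y(F,\cdot)$ yields $y(F,\psi_2(\chi))=1\cdot 1\cdot(-1)^{-1}=-1$. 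So for (ii) you supply a self-contained verification where the paper merely cites \cite[Prop. 8.2]{AC}, and for (i) you, like the paper, ultimately rely on \cite{AC}.
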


\begin{remark}\
\begin{itemize}
\item[(i)] Claim (ii) of the above result implies that, for a local extension $E/F$, the map $\chi \mapsto y(F, \psi_{2}(\chi))$ no longer lies in ${\rm Hom}^{+}(R_{\Gamma}, \mu(\QQ^c))^{\Omega_{\mathbb{Q}}}$. This shows that the map $\chi \mapsto y(F, \psi_{2}(\chi))$ behaves differently comparing to the case of extensions of odd degree 
and also to the map $\chi \mapsto y(F, \chi)$. 
\item[(ii)] For weakly and wildly ramified cases, it remains unknown that whether the second Galois-Jacobi sums attached to irreducible symplectic characters of $G$ are strictly positive real numbers. In fact, if one removes any of the conditions in Proposition \ref{Lem: local root number + thm}, then the computation of root numbers of $\psi_{2}$-twisted irreducible symplectic characters can become much more difficult due to a technique step. For an explicit example of a certain class of meta-cyclic groups, we refer the reader to the author's PhD thesis \cite[\S 6.3]{K_phd}.
\end{itemize}	
\end{remark}

Proposition \ref{Lem: local root number + thm}(i) and \eqref{Eq: def symp JG} immediately imply claim (i) of Lemma \ref{Lemma: sign depend}.

Under the hypothesis of Lemma \ref{Lemma: sign depend}(ii), in \cite[Th. 1.7 and \S10]{CV} the authors have shown that there exists an extension $L/K$ such that (i) $\mathcal{A}_{L/K}$ exists; (ii) $L/K$ is ramified at a single place $v'$  and the decomposition subgroup in $G$ of any place of $L$ above $v'$ is isomorphic to $G$. Therefore, there exists an unique $\chi \in {\rm Symp}(G)$ such that
\[ y(K, \chi) = \prod_{v|d_L} y(K_v, \chi_v) =  y(K_{v'}, \chi_{v'}) =y(K_{v'}, \chi) = -1  ,\]
where the last equality follows from Proposition \ref{Lem: local root number + thm}(ii) above. Thus, this computation combines with \eqref{Eq: def symp JG} to imply that $J'_{2, S, L/K}$ is not necessarily a trivial element (and has order $2$ in this case).

\begin{definition}\label{Def: cls symp JG}
	In terms of the extended boundary map of Burns and Flach (see \S\ref{S: extended boundary map}) and the homomorphism given in \eqref{Diagram: Kseq}, we define an element of $\Cl(\ZZ[G])$ by setting 
	\[
	\mathcal{J}_{2, S, L/K}:= \partial^0_{\ZZ,\QQ,G}\circ \delta_{G} ( J'_{2, S, L/K}  ).
	\]
\end{definition}

\subsection{New evidence for Conjecture~\ref{C:boas}}\label{S: weakly classgroup}
In this section, we show that the results presented in the previous sections can be combined with the central result of Bley and Cobbe in \cite{BC} to prove Conjecture~\ref{C:boas} for a special class of weakly ramified extensions, which extends \cite[Th. 1.5]{AC} and thereby provide an alternative argument for the latter result.

First, we state a series of properties the global extensions should satisfy.

\begin{hypothesis}\label{big hyp} 
	Let $L/K$ be a finite Galois extension of number fields that is at most weakly ramified and set $G:={\rm Gal}(L/K)$. We assume the following:	
	\begin{itemize}
		\item[(i)] The square root of inverse different $\mathcal{A}_{L/K}$ exists.
	\end{itemize}
	For every place $v$ of $K$ that is wildly ramified in $L$,
	\begin{itemize}	
		\item[(ii)] $v$ is lying above an odd prime number.
		\item[(iii)] The decomposition subgroup in $G$ of any place of $L$ above $v$ is abelian.
		\item[(iv)] The inertia subgroup in $G$ of any place of $L$ above $v$ is cyclic.
		\item[(v)] The completion of $K$ at $v$ is absolutely unramified.
		\item[(vi)] the inertia degree of $v$ in $L/K$ is prime to the absolute degree of the completion $K_v$. 
	\end{itemize}	
	%
	%
	%
\end{hypothesis}

Now, we can state our main results of this section.
\begin{thm}\label{Lem: local ab Erez}
Let $L/K$ be a weakly ramified finite Galois extension of number fields and set $G:={\rm Gal}(L/K)$. Then, the following claims are valid.
\begin{itemize}
\item[(i)] Suppose $L/K$ satisfies conditions (i) - (v) in Hypothesis \ref{big hyp}  and let $W_{L/K}$ denote the Cassou-Nogu\`es-Fr\"ohlich root number class. Then, in ${\rm Cl}(\mathbb{Z}[G])$, one has
\[ [\mathcal{A}_{L/K}] = W_{L/K} +\mathcal{J}_{2, S, L/K}. \]
\item[(ii)] Suppose $L/K$ satisfies all of the conditions in Hypothesis \ref{big hyp}. Then, Conjecture~\ref{C:boas} is valid for $L/K$. 
	\end{itemize}	
\end{thm}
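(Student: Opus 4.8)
The plan is to derive both claims from the relative $K$-theoretic identity $\mathfrak{a}_{L/K} = \mathfrak{c}_{L/K}$ of Theorem~\ref{last intro thm}(i) by pushing it into the class group via the boundary homomorphism $\partial^0_{\ZZ,\QQ,G}$, and then translating the resulting equality into a statement about signs of (twisted) Galois--Gauss sums at symplectic characters, in the spirit of Erez's classical local--to--global method. For claim (i), note first that conditions (i)--(v) of Hypothesis~\ref{big hyp} are precisely the hypotheses of Theorem~\ref{last intro thm}(i) (condition (ii) being the requirement that no wildly ramified place be $2$-adic), so $\mathfrak{a}_{L/K} = \mathfrak{c}_{L/K}$ in $K_0(\ZZ[G],\QQ[G])$. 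Applying $\partial^0_{\ZZ,\QQ,G}$ and invoking Proposition~\ref{frak a global independence}(iv) gives
\[
[\mathcal{A}_{L/K}] + W^{(2)}_{L/K} = \partial^0_{\ZZ,\QQ,G}(\mathfrak{c}_{L/K})
\]
in $\Cl(\ZZ[G])$, so it suffices to prove $\partial^0_{\ZZ,\QQ,G}(\mathfrak{c}_{L/K}) - W^{(2)}_{L/K} = W_{L/K} + \mathcal{J}_{2,S,L/K}$. The key reduction is that, by the Hasse--Schilling--Maass theorem (Lemma~\ref{Prop: Nrd bijective cond}(i),(iii)) together with the explicit description \eqref{description of image nrd RR} of $\im(\mathrm{Nrd}_{\RR[G]})$, for any $z \in \zeta(\QQ[G])^\times$ the class $\partial^0_{\ZZ,\QQ,G}(\delta_G(z))$ depends only on the tuple of signs $(\mathrm{sign}(z_\chi))_{\chi \in \mathrm{Symp}(G)}$. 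Unwinding the definitions of $W^{(2)}_{L/K}$ (through $T^{(2)}_{L/K} = (\tau^{G}_{K}\cdot(\psi_{2,*}-1)(\tau'_{L/K}))^{-1}$), of $W_{L/K}$, and of $\mathcal{J}_{2,S,L/K}$ (through \eqref{Eq: def symp JG}), and combining them with the local decompositions of Proposition~\ref{Prop: JG decomp}, the required identity reduces to the assertion that, for every $\chi \in \mathrm{Symp}(G)$, a certain product of local second Galois--Jacobi sums and $\psi_2$-twisted unramified characteristics, taken over the ramified places of $K$, has the sign prescribed by \eqref{Eq: def symp JG} and by the Cassou-Nogu\`es--Fr\"ohlich root number class.

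To evaluate that product, split the places of $K$ into three kinds. The unramified places contribute nothing. At the tamely ramified places the inertia subgroup has odd order (Remark~\ref{Cor: odd inertia}(i)), and Proposition~\ref{Lem: local root number + thm}(i) shows $W(K_v,\psi_2(\chi_v)) = W(K_v,2\chi_v) = 1$, so such places contribute only positive real factors and hence no sign. At each wildly ramified place, condition (v) (that $K_v$ is absolutely unramified) and condition (iii) (that the decomposition group $G_w$ is abelian) allow one, via Proposition~\ref{Prop: adam}(ii) and the explicit shape of the local twisted unramified characteristic, to isolate the contribution as the factor $y(K_v,1-\psi_2(\chi_v))$ appearing in \eqref{Eq: def symp JG}. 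Collecting these local contributions together with the global identity $\mathrm{sign}(\tau(K,\chi)) = W(K,\chi)$ for symplectic $\chi$ — which is exactly the definition of $W_{L/K}$ — and gathering the remaining $\psi_2$-twisted signs into $\mathcal{J}_{2,S,L/K}$ yields the desired identity, and hence claim (i). (Here one uses the fact, established in \S\ref{S: weak rami K_0 result} and via Lemma~\ref{Lemma: local weakly rami main}, that at the wild places the relevant norm resolvents are already controlled by Bley and Cobbe's computations; this is the point where the argument follows Erez rather than Agboola--McCulloh.)

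For claim (ii), add condition (vi) of Hypothesis~\ref{big hyp}. Since $\mathcal{J}_{2,S,L/K}$ has order at most $2$, claim (i) gives $[\mathcal{A}_{L/K}] + \mathcal{J}_{2,S,L/K} = W_{L/K} + 2\mathcal{J}_{2,S,L/K} = W_{L/K}$, so Conjecture~\ref{C:boas} for $L/K$ is equivalent to Chinburg's equality $\Omega(L/K,2) = W_{L/K}$ of \eqref{third one}. I would establish the latter for the extensions in Hypothesis~\ref{big hyp} by the same local--to--global strategy: it holds for tamely ramified extensions by Taylor's theorem, $\Omega(L/K,2) = [\mathcal{O}_L] = W_{L/K}$, while at each wildly ramified place the comparison of the relevant local component of the $\Omega(2)$-invariant with the local Galois--Gauss sum is governed by the computations of Bley and Cobbe in \cite{BC} (building on Pickett and Vinatier) and by the root-number analysis of \cite{AC}, with condition (vi) — the coprimality of the inertia degree with $[K_v:\QQ_p]$ — being exactly what is needed so that $\psi_2$ interacts correctly with the induction maps entering through the norm resolvents (cf. Proposition~\ref{Prop: adam}(iv) and Remark~\ref{Rem: adam commute ind coprime}). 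I expect this final step to be the main obstacle: the general failure of $\psi_2$ to commute with induction from subgroups when $|G|$ is even is precisely what makes the wildly ramified places delicate, and it is here that conditions (v) and (vi) and the explicit root-number computations of \cite{AC} must be combined to keep the argument under control.
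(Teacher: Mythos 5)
Your overall architecture is the same as the paper's: start from $\mathfrak{a}_{L/K}=\mathfrak{c}_{L/K}$ (Theorem \ref{last intro thm}(i), whose hypotheses are exactly conditions (i)--(v)), push into $\Cl(\ZZ[G])$ via $\partial^0_{\ZZ,\QQ,G}$ and Proposition \ref{frak a global independence}(iv), reduce to symplectic signs via Hasse--Schilling--Maass, and get claim (ii) from claim (i) plus Bley--Cobbe. But there is a genuine gap in your claim (i). The class $W^{(2)}_{L/K}$ is $\partial^0_{\ZZ,\QQ,G}\circ\delta_G$ applied to ${\rm Nrd}_{\QQ^c[G]}(x)\cdot T^{(2)}_{L/K}$, and after splitting off the Jacobi-sum and $y$-parts via Proposition \ref{Prop: JG decomp}(iii) (whose symplectic signs are absorbed \emph{tautologically} by the definition of $\mathcal{J}_{2,S,L/K}$ -- no tame-place root-number analysis such as Proposition \ref{Lem: local root number + thm}(i) is needed for this), the term that has to produce $W_{L/K}$ is ${\rm Nrd}_{\QQ^c[G]}(x)\cdot(\tau^{\dagger}_{L/K})^{-1}$. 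Its symplectic signs involve not only the Gauss sums but also the reduced norm ${\rm Nrd}_{\QQ^c[G]}(x)$, i.e.\ a norm-resolvent attached to a rational normal basis generator; your proposal silently assumes this factor contributes no sign and then appeals to ``${\rm sign}(\tau(K,\chi))=W(K,\chi)$, which is exactly the definition of $W_{L/K}$''. That comparison is not definitional: in the paper it is precisely the content of the Bley--Burns equivariant epsilon-constant results (\cite[Prop.~3.1, Prop.~3.4, Rem.~3.5]{BB}) invoked in Lemma \ref{Prop: local ab frak c proj}(i), which identify ${\rm Nrd}_{\QQ^c[G]}(x)(\tau^{\dagger}_{L/K})^{-1}$ with ${\rm Nrd}_{\RR[G]}(y)\,\epsilon_{L/K}^{-1}$ and compute $\partial^0_{\ZZ,\RR,G}(\delta_G(\epsilon_{L/K}))=W_{L/K}$. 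Without this (or an equivalent Fr\"ohlich-style analysis of symplectic resolvents versus root numbers) your sign bookkeeping does not close.

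Two further points. First, you fold the contribution of $\mathfrak{c}_{L/K}$ into the same sign computation, but $\mathfrak{c}_{L/K}$ is defined $\ell$-adically, place by place, so identifying $\partial^0_{\ZZ,\QQ,G}(\mathfrak{c}_{L/K})$ with the class of a single global central unit needs the argument of Lemma \ref{Prop: local ab frak c proj}(ii): hypotheses (ii)--(iii) make the wild decomposition groups abelian with inertia of odd $p$-power order, force the off-residue-characteristic local terms to vanish, and place the resulting global element in $\zeta(\QQ[G])^{\times+}$, so that this image is in fact $0$; the factors $y(K_v,1-\psi_2(\chi_v))$ in $\mathcal{J}_{2,S,L/K}$ come from $T^{(2)}_{L/K}$ (over all $v\mid d_L$, including tame places), not from $\mathfrak{c}_{L/K}$. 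Second, your claim (ii) reduction to $\Omega(L/K,2)=W_{L/K}$ matches the paper, and that equality is a direct citation of \cite[Cor.~2]{BC} (Proposition \ref{Prop: chin conj BC}) under conditions (ii)--(vi); condition (vi) is simply a hypothesis of that theorem and has nothing to do with $\psi_2$ commuting with induction, so the obstacle you anticipate at that step does not arise.
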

%
%


In the sequel, we let $\mathcal{W}_{L/K}$ denote the set of non-Archimedean places $v$ of $K$ that ramify wildly in $L/K$.

Comparing part (i) of Theorem~\ref{Lem: local ab Erez} with the result of the image in the classgroup of $\mathfrak{a}_{L/K}$ in Proposition~\ref{frak a global independence}(iv), it is sufficient for us to show that, under the stated hypotheses, the difference between the class $W^{(2)}_{L/K}$ and the class $W_{L/K}$ in ${\rm Cl}(\ZZ[G])$. We shall address this in part (i) of the following result, which relies on the result \cite[Prop. 3.1]{BB} of Bley and Burns for the `equivariant global epsilon constant'.

The proof of part (ii) of the following results adapts the argument of \cite[Lem. 8.7]{BBH} of Bley, Burns and Hahn.

\begin{lemma}\label{Prop: local ab frak c proj}
Suppose $L/K$ satisfies conditions (i) in Hypothesis \ref{big hyp}. Then, the following claims are valid.
\begin{itemize}
\item [(i)]  $W^{(2)}_{L/K} = W_{L/K} + \mathcal{J}_{2, S, L/K}$. 	
\item[(ii)] Suppose that, in addition, $L/K$ also satisfies conditions (ii) and (iii) in Hypothesis \ref{big hyp}. Then, the image in ${\rm Cl}(\mathbb{Z}[G])$ of $\mathfrak{c}_{L/K}$ vanishes. 
\end{itemize}
\end{lemma}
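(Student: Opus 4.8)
The plan is to treat the two claims separately, as they require rather different inputs. For claim (i), the strategy is to unwind the definitions of the three classes $W^{(2)}_{L/K}$, $W_{L/K}$ and $\mathcal{J}_{2,S,L/K}$ and to identify their difference with the image under $\partial^0_{\ZZ,\QQ,G}\circ\delta_G$ of an explicit central idele, which will then be shown to be trivial using the classical computation of Bley and Burns for the equivariant global epsilon constant. Concretely, recall from Definition \ref{Def: frak a global} and Proposition \ref{frak a global independence}(iv) that $W^{(2)}_{L/K}=\partial^0_{\ZZ,\QQ,G}(\delta_G(\mathrm{Nrd}_{\QQ^c[G]}(x)\cdot T^{(2)}_{L/K}))$; since the right-hand side is independent of the choice of $x$, one may choose $x$ to realise $\Delta(\mathcal{A}_{L/K})$ up to $K_0(\ZZ[G],\QQ[G])$ — or, more conveniently, simply work with the element $\mathrm{Nrd}_{\QQ^c[G]}(x)\cdot T^{(2)}_{L/K}\in\zeta(\QQ[G])^\times$ directly. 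First I would rewrite $T^{(2)}_{L/K}=(\tau^G_K\cdot(\psi_{2,*}-1)(\tau'_{L/K}))^{-1}$ using Proposition \ref{Prop: JG decomp}(iii) in the form $T^{(2)}_{L/K}=(J_{2,L/K}\cdot(1-\psi_{2,*})(y_{L/K})\cdot\tau^\dagger_{L/K})^{-1}\cdot(\text{unit terms})$, and then, on the classgroup level, discard the contributions of the elements of $\zeta(\QQ[G])^{\times+}$ (which die under $\partial^0\circ\delta_G$ by the argument already used to show $2\cdot W^{(2)}_{L/K}=0$ in the proof of Proposition \ref{frak a global independence}(iv)).

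The point is that, modulo $\zeta(\QQ[G])^{\times+}$, the class of $\mathrm{Nrd}_{\QQ^c[G]}(x)\cdot T^{(2)}_{L/K}$ in $\zeta(\QQ[G])^\times/\zeta(\QQ[G])^{\times+}$ is detected purely by the signs of its symplectic components, i.e.\ by Lemma \ref{Prop: Nrd bijective cond}(iii) together with the description \eqref{description of image nrd RR}. For a non-trivial irreducible symplectic character $\chi$, the $\chi$-component of $T^{(2)}_{L/K}$ involves $\tau(K,\psi_2(\chi)-2\chi)$ and $y(K,1-\psi_2(\chi))$ — exactly the quantities whose sign defines $J'_{2,S}(L/K,\chi)$ in \eqref{Eq: def symp JG} — while the $\chi$-component of $\mathrm{Nrd}_{\QQ^c[G]}(x)\cdot(\text{the part of }T^{(2)}_{L/K}\text{ coming from }\tau_{L/K}\text{ untwisted})$ has sign governed by the Artin root number $W(K,\chi)$, which is precisely what enters the Cassou-Nogu\`es–Fr\"ohlich class $W_{L/K}$. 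Matching these signs term by term, and invoking the result \cite[Prop. 3.1]{BB} of Bley and Burns (which pins down the $\psi_2$-independent part of the equivariant global epsilon constant, thereby relating the $x$-dependent factor to $\tau^\dagger_{L/K}$ and hence to root numbers), one gets an equality $\mathrm{Nrd}_{\QQ^c[G]}(x)\cdot T^{(2)}_{L/K}=w\cdot J'_{2,S,L/K}\cdot(\text{element of }\zeta(\QQ[G])^{\times+})$ in which $\delta_G(w)$ maps to $W_{L/K}$ under $\partial^0_{\ZZ,\QQ,G}$; applying $\partial^0_{\ZZ,\QQ,G}\circ\delta_G$ then yields claim (i). The main obstacle here is the careful bookkeeping of the various Galois–Gauss sum normalisations (the modified sum $\tau'$, the absolute sum $\tau^\dagger$, the induced discriminant $\tau^G_K$) and ensuring that each auxiliary factor either lies in $\zeta(\QQ[G])^{\times+}$ or is accounted for by the epsilon-constant formula of \cite{BB}.

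For claim (ii), the strategy is the one from \cite[Lem. 8.7]{BBH}: under hypotheses (ii) and (iii), every wildly ramified place $v$ lies above an odd prime $p=\ell$ and has abelian decomposition group $G_w$; by Definition \ref{idelic twisted def} it therefore suffices to show that each local term ${\rm i}^{G,*}_{G_w,\QQ_\ell}(\mathfrak{c}_{L_w/K_v})$ has trivial image in ${\rm Cl}(\ZZ[G])$. Now $\partial^0$ on relative $K$-groups is compatible with the induction maps ${\rm i}^{G,*}_{G_w,\QQ_\ell}$ and with the localisation decomposition \eqref{Eq: KT iso K0}, so it is enough to check that $\mathfrak{c}_{L_w/K_v}$ lies in the kernel of $\partial^0_{\ZZ_\ell,\QQ_\ell,G_w}\colon K_0(\ZZ_\ell[G_w],\QQ_\ell[G_w])\to{\rm Cl}(\ZZ_\ell[G_w])$. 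By Definition \ref{Def: loc c}, $\mathfrak{c}_{L_w/K_v}=\delta_{\ZZ_\ell,\QQ_\ell,G_w}((1-\psi_{2,*})(y_{L_w/K_v}))=\partial^1_{\ZZ_\ell,\QQ_\ell,G_w}(({\rm Nrd}_{\QQ_\ell[G_w]})^{-1}((1-\psi_{2,*})(y_{L_w/K_v})))$ — here I use Lemma \ref{Prop: Nrd bijective cond}(ii)(c) to invert the reduced norm over the $\ell$-adic field — and hence $\mathfrak{c}_{L_w/K_v}$ lies in the image of $\partial^1_{\ZZ_\ell,\QQ_\ell,G_w}$, which is exactly the kernel of $\partial^0_{\ZZ_\ell,\QQ_\ell,G_w}$ by the exactness of the lower row of \eqref{Diagram: Kseq}. (Note that the inertia subgroup of $G_w$ has odd order by Remark \ref{Cor: odd inertia}(ii), so the explicit formula of Lemma \ref{Prop: local c tame vanishes}(i) applies and confirms that $(1-\psi_{2,*})(y_{L_w/K_v})$ genuinely lies in $\zeta(\QQ[G_w])^\times$, as needed for these manipulations.) I expect claim (ii) to be essentially formal given the earlier results; the real work of the lemma is concentrated in the sign-chasing of claim (i).
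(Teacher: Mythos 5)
Your treatment of claim (i) follows essentially the same route as the paper: you split $\mathrm{Nrd}_{\QQ^c[G]}(x)\cdot T^{(2)}_{L/K}$ via Proposition \ref{Prop: JG decomp}(iii) into a factor $(\tau^\dagger_{L/K})^{-1}$ and a factor $(J_{2,L/K}\cdot(1-\psi_{2,*})(y_{L/K}))^{-1}$, handle the latter by comparing symplectic signs with $J'_{2,S,L/K}$ (using \eqref{description of image nrd RR} and Lemma \ref{ext bound hom}(ii)), and handle the former through the Bley--Burns equivariant epsilon constant results \cite[Prop.\ 3.1, Prop.\ 3.4, Rem.\ 3.5]{BB}, which is exactly how the paper identifies that term with $W_{L/K}$. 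Apart from some vagueness in the bookkeeping, this part is fine.

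Claim (ii), however, contains a genuine gap, precisely at the step you describe as ``essentially formal''. You reduce to showing that $\mathfrak{c}_{L_w/K_v}$ lies in the kernel of the \emph{local} boundary map $\partial^0_{\ZZ_\ell,\QQ_\ell,G_w}\colon K_0(\ZZ_\ell[G_w],\QQ_\ell[G_w])\to{\rm Cl}(\ZZ_\ell[G_w])$, and you justify this by appealing to compatibility of $\partial^0$ with induction and with the decomposition \eqref{Eq: KT iso K0}. This reduction is not valid. Since $\ZZ_\ell$ is complete local, every finitely generated projective $\ZZ_\ell[G_w]$-module is free, so ${\rm Cl}(\ZZ_\ell[G_w])$ is trivial and your criterion is satisfied by \emph{every} element of $K_0(\ZZ_\ell[G_w],\QQ_\ell[G_w])$; equivalently, by exactness of the local row of \eqref{Diagram: Kseq}, the image of $\partial^1_{\ZZ_\ell,\QQ_\ell,G}$ is the whole local relative $K$-group. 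If membership in that image forced vanishing in ${\rm Cl}(\ZZ[G])$, then the global map $\partial^0_{\ZZ,\QQ,G}$ would annihilate every summand of \eqref{Eq: KT iso K0} and hence be identically zero, which is absurd (it is surjective onto ${\rm Cl}(\ZZ[G])$, which is nontrivial in general). The point is that the global $\partial^0_{\ZZ,\QQ,G}$, restricted to the $\ell$-summand, does \emph{not} factor through ${\rm Cl}(\ZZ_\ell[G])$: an element $\partial^1_{\ZZ_\ell,\QQ_\ell,G}(u)$ corresponds globally to the class of a $\ZZ[G]$-lattice that differs from a free lattice only at $\ell$, and its image in ${\rm Cl}(\ZZ[G])$ is the idelic class of $\mathrm{Nrd}(u)$, which is generally nonzero. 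This is also why your argument makes no real use of the abelian hypothesis (iii) --- a warning sign, since the lemma does need it.

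The correct argument (the paper's) is to realise $\mathfrak{c}_{L/K}$ as the image under the \emph{global} map $\delta_G$ of the single element $\prod_{v\in\mathcal{W}_{L/K}}\tilde{{\rm i}}^{G}_{G_w}\bigl((1-\psi_{2,*})(y_{L_w/K_v})\bigr)$ of $\zeta(\QQ[G])^{\times}$, which lies in $\zeta(\QQ[G])^{\times+}$ because the abelian hypothesis kills all symplectic characters of each $G_w$; only then does global exactness of \eqref{Diagram: Kseq} give vanishing in ${\rm Cl}(\ZZ[G])$. Establishing that identification requires a prime-by-prime comparison via $j_{\ell,*}$, Lemma \ref{ext bound hom}(iii) and \eqref{Diagram: K_0 ind commute}, and the crux is to show that for $v\in\mathcal{W}_{L/K}$ with $v\nmid\ell$ the element $\partial^1_{\ZZ_\ell,\QQ_\ell,G_w}\circ({\rm Nrd}_{\QQ_\ell[G_w]})^{-1}\bigl((1-\psi_{2,*})(y_{L_w/K_v})\bigr)$ vanishes; this uses that $G_w$ is abelian and its inertia group is a $p$-group with $p\neq\ell$, so that by Lemma \ref{Prop: local c tame vanishes}(i) the element $(1-e_{I_w})+\sigma_w e_{I_w}$ is the reduced norm of a unit of $\ZZ_\ell[G_w]$. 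None of this is formal, and it is exactly the content your proposal is missing.
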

\begin{proof}
We set $y_{L/K}^{(2)}:= (1 -\psi_{2, *})(y_{L/K})$ and $y_{L_w/K_v}^{(2)}:= (1- \psi_{2, *})(y_{L_w/K_v})$.	

To prove claim (i), we recall (from Proposition~\ref{frak a global independence}(iv)) the explicit expression of $W^{(2)}_{L/K}$.
\begin{align}\label{W2 proj eq 1}
	W^{(2)}_{L/K}\nonumber
	=&\, \partial^0_{\ZZ,\QQ,G}\bigl(\delta_G({\rm Nrd}_{\QQ^c[G]}(x)\cdot T^{(2)}_{L/K})\bigr) \nonumber \\ 
	=&\, \partial^0_{\ZZ,\QQ,G} \left( \delta_G \left( {\rm Nrd}_{\QQ^c[G]}(x)\cdot \bigl(\tau^{\dagger}_{L/K} \cdot J_{2, L/K}\cdot y_{L/K}^{(2)}\bigr)^{-1}\right)\right)\nonumber \\ 
	=&\, \partial^0_{\ZZ,\QQ,G}\bigl(\delta_G({\rm Nrd}_{\QQ^c[G]}(x)\cdot (\tau^{\dagger}_{L/K})^{-1})\bigr) -  \partial^0_{\ZZ,\QQ,G}\bigl( \delta_G ( J_{2, L/K}\cdot y_{L/K}^{(2)})\bigr)
\end{align}
where second equality is given in Proposition~\ref{Prop: JG decomp}(iii).

We first claim that the second term of \eqref{W2 proj eq 1} is equal to $- \mathcal{J}_{2, S, L/K}= \mathcal{J}_{2, S, L/K}$. To see this, we note that
\begin{align*}
	\partial^0_{\ZZ,\QQ,G}\bigl( \delta_G ( J_{2, L/K}\cdot y_{L/K}^{(2)})\bigr) - \mathcal{J}_{2, S, L/K}= &\,
	\partial^0_{\ZZ,\QQ,G} \circ \delta_G ( J_{2, L/K}\cdot y_{L/K}^{(2)} \cdot    (J'_{2, S, L/K})^{-1})\\
	=&\,  \partial_{\ZZ,\QQ,G}^0 \bigl( \partial_{\ZZ,\QQ,G}^1 ({\rm Nrd}_{\QQ[G]}^{-1} (J_{2, L/K}\cdot y_{L/K}^{(2)} \cdot    (J'_{2, S, L/K})^{-1}))\bigr)\\
	=&\, 0
\end{align*}
where the second equality is deduced from \eqref{description of image nrd RR}, the explicit definition of $J'_{2, S, L/K}$ in \eqref{Eq: def symp JG} and Lemma \ref{ext bound hom}(ii). And the third equality follows from the fact that $\partial_{\ZZ,\QQ,G}^0 \circ \partial_{\ZZ,\QQ,G}^1$
is equal to the zero homomorphism (by the exactness of \eqref{Diagram: Kseq}).

Next, to prove that the first term of \eqref{W2 proj eq 1} is equal to $-W_{L/K}=W_{L/K}$, we recall the (global) $\zeta(\RR[G])$-valued `epsilon factor' function $\epsilon(s)$ of a complex variable $s$ given by Bley and Burns in \cite[\S3]{BB} with $\epsilon_{L/K}:=\epsilon(0) \in \zeta(\RR[G])^{\times}$. In particular, by Proposition 3.4 and Remark 3.5 of loc.cit., one has
\[{\rm Nrd}_{\RR[G]}(y)\cdot (\epsilon_{L/K})^{-1} ={\rm Nrd}_{\QQ^c[G]}(x)\cdot (\tau_{L/K}^{\dagger})^{-1} \in \zeta(\QQ[G])^\times, \]
with a suitable choice of $y$ (and the choice of $x$ as in the proof of Lemma~\ref{Prop: global a in Q}). Upon recalling the explicit definition of the extended boundary homomorphism on $\zeta(\RR[G])^{\times}$ from \cite[(44)]{BF}, one has that
\begin{align*}
	\partial^0_{\ZZ,\QQ,G}\bigl(\delta_G({\rm Nrd}_{\QQ^c[G]}(x)\cdot (\tau^{\dagger}_{L/K})^{-1})\bigr)=	 &\, \partial^0_{\ZZ,\QQ,G}\bigl(\delta_G({\rm Nrd}_{\RR[G]}(y)\cdot (\epsilon_{L/K})^{-1})\bigr) \\
	=&\, \partial^0_{\ZZ,\RR,G}\bigl(\partial^{1}_{\ZZ,\RR,G} (y))\bigr)  - \partial^0_{\ZZ,\RR,G}\bigl(\delta_G( \epsilon_{L/K})\bigr) \\
	=&\, -\partial^0_{\ZZ,\RR,G}\bigl(\delta_G( \epsilon_{L/K})\bigr)\\ 
	=&\, -W_{L/K}.
\end{align*}
Here, the second equality follows from Lemma~\ref{ext bound hom}(ii) (extended to $\zeta(\RR[G])^{\times}$, see Remark~\ref{Rem: extend bound extend to R}) and the right two vertical maps of \eqref{Diagram: Kseq} (with $R$, $F$, $E$ and $\Gamma$ replaced by $\ZZ$, $\QQ$, $\RR$ and $G$ respectively, and the obvious inclusion $\QQ \subseteq \RR$), and last equality is given by the result \cite[Prop. 3.1]{BB} of Bley and Burns (where the equivariant global epsilon constant $\mathcal{E}_{L/K}$ defined in \cite[p. 551]{BB} is equal to $\delta_G( \epsilon_{L/K})$). 

Turning to claim (ii), we write $\widehat{\delta}_{G_w, \ell}:=\partial^1_{\ZZ_\ell,\QQ_\ell,G_w}\circ ({\rm Nrd}_{\QQ_\ell[G_w]})^{-1}$ (as in \eqref{frak c local delta}). 

At the outset we note that, under the stated hypotheses, the decomposition subgroup $G_w$ in $G$ of any place $w$ of $L$ lying above $v \in \mathcal{W}_{L/K}$ is abelian, 
and so there is no irreducible symplectic character in $G_w$. Therefore, by Lemma~\ref{Prop: Nrd bijective cond}(ii)(b), one has $\zeta(\mathbb{Q}[G_w])^{\times+} = \zeta(\mathbb{Q}[G_w])^{\times}$.
	
We also note that, for all $v\in S_f(K)$ and the decomposition subgroup $G_w$ in $G$ of any place of $L$ above $v$,
\begin{equation}\label{image + implies}
x \in \zeta(\mathbb{Q}[G_w])^{\times+} \quad \implies \quad  \mathrm{\tilde{i}}_{G_{w}}^{G} (x) \in \zeta(\mathbb{Q}[G])^{\times+}.
\end{equation}
%
	
To prove claim (ii), we first claim that,  under the stated hypothesis, one has
\[\delta_{G}(\prod_{v \in \mathcal{W}_{L/K}} \tilde{{\rm i}}^{G}_{G_w} (y_{L_w/K_v}^{(2)})) = \mathfrak{c}_{L/K}. \]
	%
Clearly $\tilde{{\rm i}}^{G}_{G_w} (y_{L_w/K_v}^{(2)})$ belongs to $\zeta(\QQ[G])^{\times}$ (from \eqref{unram maximal order} and \eqref{Eq: center ind def}). To prove the claimed equality, it is enough for us to show that, under the natural projection map $j_{\ell, *}: K_0(\ZZ[G],\QQ[G]) \to K_0(\ZZ_\ell[G],\QQ_\ell[G])$ (given in \eqref{K_0 proj K_0_p}), the image in $K_0(\ZZ_\ell[G],\QQ_\ell[G])$ of the elements $\delta_{G}(\prod_{v \in \mathcal{W}_{L/K}} \tilde{{\rm i}}^{G}_{G_w} (y_{L_w/K_v}^{(2)}))$ and $\mathfrak{c}_{L/K}$   coincide. It follows that, for a fixed prime $\ell$,
\begin{align*}
&\, j_{\ell, *}\left(\delta_{G}\bigl(\prod_{v \in \mathcal{W}_{L/K}} \tilde{{\rm i}}^{G}_{G_w} (y_{L_w/K_v}^{(2)})\bigr) - \mathfrak{c}_{L/K} \right)\\
=&\,  \sum_{v \in \mathcal{W}_{L/K}} {\rm i}^{G, *}_{G_w, \QQ_\ell} \big(\widehat{\delta}_{G_w, \ell} ( y_{L_w/K_v}^{(2)})   \big) 
- \sum_{v|\ell}{\rm i}^{G,*}_{G_w, \QQ_\ell} \bigl( \widehat{\delta}_{G_w, \ell}( y_{L_w/K_v}^{(2)}) \bigr)\\
=&\, \sum_{v \in \mathcal{W}_{L/K}, v\nmid \ell} {\rm i}^{G, *}_{G_w, \QQ_\ell} \big(\widehat{\delta}_{G_w, \ell} ( y_{L_w/K_v}^{(2)})\big),
\end{align*}
where the first equality follows from Lemma~\ref{ext bound hom}(iii), the commutative diagram \eqref{Diagram: K_0 ind commute} and Remark~\ref{Rem: idelic c l-comp}(ii). 
	
It is therefore enough to show that, for each $v\in \mathcal{W}_{L/K}$, if $v$ does not divides $\ell$, the element $\widehat{\delta}_{G_w, \ell} ( y_{L_w/K_v}^{(2)})$ vanishes in $K_0(\ZZ_{\ell}[G_w], \QQ_\ell[G_w])$. To do this, we write $p$ for the residue characteristic of $v$ (and so $p\neq \ell$). Now, under the stated hypothesis, $G_w$ is abelian and the inertia subgroup $I_w:= G_{w, 0}$ of $G_w$ is a $p$-group, and so (since $\ZZ_{\ell}[I_w]$ is the maximal $\ZZ_{\ell}$-order in $\QQ_\ell[I_w]$) the proof of Lemma~\ref{Prop: local c tame vanishes} implies that 
\[ y_{L_w/K_v}^{(2)}=(1-e_{I_w})+\sigma_we_{I_w} \in {\rm Nrd}_{\QQ_\ell[G_w]}(\ZZ_{\ell}[G_w]^{\times}),\]
where where $\sigma_w$ is a lift to $G_w$ of the Frobenius element in $G_w/I_w$. Then, by the exactness of \eqref{Diagram: Kseq}, the element $\widehat{\delta}_{G_w, \ell} ( y_{L_w/K_v}^{(2)})$ vanishes in $K_0(\ZZ_{\ell}[G_w], \QQ_\ell[G_w])$, as required.
	
It follows that
\begin{align*}
\partial_{\ZZ,\QQ,G}^0 ( \mathfrak{c}_{L/K}) 
=&\, \partial_{\ZZ,\QQ,G}^0 \left(\delta_{G}(\prod_{v \in \mathcal{W}_{L/K}} \tilde{{\rm i}}^{G}_{G_w} (y_{L_w/K_v}^{(2)})) \right)\\
=&\, \partial_{\ZZ,\QQ,G}^0 \left(  \partial^{1}_{\ZZ,\QQ,G} (({\rm Nrd}_{\QQ[G]})^{-1}  (\prod_{v \in \mathcal{W}_{L/K}} \tilde{{\rm i}}^{G}_{G_w}(y_{L_w/K_v}^{(2)}))) \right)\\
=&\, 0.
\end{align*}	
Here, the second equality is given by \eqref{image + implies} and Lemma~\ref{ext bound hom}(ii), and the third one follows from the fact that the composite map $\partial_{\ZZ,\QQ,G}^0 \circ \partial_{\ZZ,\QQ,G}^1$ is zero. This completes the proof of our claim.	
\end{proof}

Now we are ready to prove Theorem~\ref{Lem: local ab Erez}. 

Under the stated hypothesis, claim (i) follows directly from Theorem~\ref{last intro thm}(i), Proposition~\ref{frak a global independence}(iv) and Lemma~\ref{Prop: local ab frak c proj}. Then, claim (ii) can be deduced immediately from claim (i) and the result \cite[Cor. 2]{BC} of Bley and Cobbe recalled below.
\begin{prop}[{Bley and Cobbe}]\label{Prop: chin conj BC}
	Let $L/K$ be a weakly ramified finite Galois extension of number fields that satisfies conditions (ii) - (vi) in Hypothesis \ref{big hyp}. Then, $\Omega(L/K, 2) = W_{L/K}$.
\end{prop}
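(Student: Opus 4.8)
The plan is to reproduce the strategy of Bley and Cobbe, since this proposition is not new but is rather the external input \cite[Cor. 2]{BC} that feeds into Theorem~\ref{Lem: local ab Erez}(ii); accordingly, in the paper itself it suffices to record the statement and cite loc. cit. The proof rests on the standard local-global analysis of Chinburg's conjecture in degree $2$. First I would recall that, independently of ramification, the class $\Omega(L/K,2)$ admits a description as the image under $\partial^0_{\ZZ,\QQ,G}$ of a canonical element of $K_0(\ZZ[G],\QQ[G])$ built from the modified equivariant Galois-Gauss sum $\tau'_{L/K}$ together with local resolvent corrections, whereas $W_{L/K}$ is the image under $\partial^0_{\ZZ,\QQ,G}\circ\delta_G$ of the symplectic root number idele. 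Both descriptions are compatible with the decomposition \eqref{Eq: KT iso K0} of $K_0(\ZZ[G],\QQ[G])$ over the finite places of $K$, so the difference $\Omega(L/K,2)-W_{L/K}$ is a sum of purely local terms, one for each place $v$ of $K$ that ramifies in $L$. At the tame places this local term vanishes, since Chinburg's theorem gives $\Omega(L/K,2)=[\mathcal{O}_L]$ in the tame case and Taylor's proof of Fr\"ohlich's conjecture \cite{MT_81} gives $[\mathcal{O}_L]=W_{L/K}$; thus only the wildly ramified places remain.

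Next I would invoke Hypothesis~\ref{big hyp}(ii)--(vi): each wildly ramified place $v$ lies above an odd prime $p$, has abelian decomposition group $G_w$ with cyclic inertia, absolutely unramified completion $K_v$, and inertia degree prime to $[K_v:\QQ_p]$. Under these constraints the computation \eqref{explicit computation} forces the inertia subgroup $I_w$ of $G_w$ to be cyclic of odd prime order $p$, with a single lower ramification jump at $1$, so that $L_w/K_v$ is precisely of the form analysed by Pickett and Vinatier in \cite{PV} and, more generally, by Bley and Cobbe in \cite[\S5]{BC}. For such extensions one has explicit formulas for the relevant norm resolvents and twisted local Galois-Gauss sums, and the outcome of the computations in loc. cit. is that the corresponding local contribution to $\Omega(L/K,2)-W_{L/K}$ vanishes. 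Summing these vanishing wild terms with the trivial tame terms yields $\Omega(L/K,2)=W_{L/K}$.

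The genuinely hard step is the wild local calculation of \cite[\S5]{BC} (itself relying heavily on \cite{PV}): one must show that the ratio of the norm resolvent of a generator of $\mathcal{A}_{L_w/K_v}$ over $\mathcal{O}_{K_v}[G_w]$ to the appropriate twisted local Galois-Gauss sum lies in $\Det(\mathcal{O}_p^t[G_w]^\times)$, so that it is annihilated by the composite boundary map recalled in \S\ref{S: taylor fixed point}. It is here that oddness of $p$, cyclicity of the inertia, and absolute unramifiedness of $K_v$ enter in an essential way, reducing the problem to a tractable computation with cyclotomic Gauss sums; dropping any one of these hypotheses makes the corresponding root number computation considerably more delicate. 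Since the result is due to Bley and Cobbe, for the full details I refer the reader to \cite[Cor. 2]{BC}.
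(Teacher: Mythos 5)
Your proposal is correct and matches the paper's treatment: the paper gives no proof of this proposition at all, but simply records it as the external input \cite[Cor. 2]{BC} feeding into Theorem~\ref{Lem: local ab Erez}(ii), exactly as you do. Your additional sketch of the Bley--Cobbe strategy (local-global reduction, tame places via Taylor/Fr\"ohlich, wild places via the resolvent and twisted Gauss sum computations of \cite{BC} and \cite{PV} landing in ${\rm Det}(\mathcal{O}_p^t[G_w]^\times)$) is a reasonable summary but is not required by, nor present in, the paper itself.
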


\normalsize

\end{document}